%% file: main.tex
\documentclass[11pt]{article}

\usepackage[margin=1in]{geometry}
\usepackage{amsmath,amssymb,amsthm,mathtools,bm}
\usepackage{aliascnt}
\usepackage{booktabs,tabularx}
\usepackage{graphicx}
\usepackage{subcaption}
\usepackage{flafter}
\usepackage{placeins}
\usepackage{enumitem}
\usepackage{microtype}
\usepackage{xcolor}
\usepackage[breakable]{tcolorbox}
\usepackage{tikz}
\usepackage[authoryear,round,sort]{natbib}
\usepackage[colorlinks=true,linkcolor=blue!55!black,
            citecolor=blue!55!black,urlcolor=blue!55!black]{hyperref}
\makeatletter
\def\UrlBreaks{\do\.\do\@\do\\\do\!\do\_\do\|\do\;\do\>\do\]%
  \do\)\do\,\do\?\do\&\do\'\do+\do\=\do\#}
\def\UrlBigBreaks{\do@url@hyp}
\g@addto@macro\UrlOrds{\do\/\do\:}
\makeatother
\usepackage[nameinlink,capitalise,noabbrev]{cleveref}

\allowdisplaybreaks \setlist[itemize]{leftmargin=1.7em,itemsep=0.25em,topsep=0.35em} \setlist[enumerate]{leftmargin=1.9em,itemsep=0.25em,topsep=0.35em} \newcolumntype{Y}{>{\raggedright\arraybackslash}X}

\newtheorem{theorem}{Theorem}[section] \newaliascnt{corollary}{theorem} \newtheorem{corollary}[corollary]{Corollary} \aliascntresetthe{corollary} \newaliascnt{proposition}{theorem} \newtheorem{proposition}[proposition]{Proposition} \aliascntresetthe{proposition} \newaliascnt{lemma}{theorem} \newtheorem{lemma}[lemma]{Lemma} \aliascntresetthe{lemma}
\newaliascnt{definition}{theorem} \theoremstyle{definition} \newtheorem{definition}[definition]{Definition} \aliascntresetthe{definition} \theoremstyle{plain}
\newaliascnt{remark}{theorem}  \aliascntresetthe{remark} \newaliascnt{assumption}{theorem}  \aliascntresetthe{assumption}
\newaliascnt{example}{theorem} \theoremstyle{definition} \newtheorem{example}[example]{Example} \aliascntresetthe{example} \theoremstyle{plain}

\newcommand{\R}{\mathbb{R}}
\newcommand{\E}{\mathbb{E}}
\newcommand{\Pp}{\mathbb{P}}
\newcommand{\one}{\bm{1}}
\newcommand{\ind}{\mathbf{1}}
\newcommand{\e}{\mathrm{e}}
\newcommand{\wh}{\widehat}
\newcommand{\wt}{\widetilde}
\newcommand{\cA}{\mathcal{A}}
\newcommand{\cD}{\mathcal{D}}

\newcommand{\cH}{\mathcal{H}}
\newcommand{\cL}{\mathcal{L}}

\newcommand{\cV}{\mathcal{V}}
\newcommand{\norm}[1]{\left\lVert #1\right\rVert}
\newcommand{\abs}[1]{\left\lvert #1\right\rvert}

\newcommand{\supp}{\operatorname{supp}}
\newcommand{\Top}{\operatorname{Top}}

\newcommand{\sig}{\sigma}
\newcommand{\MLE}{\mathrm{MLE}}
\newcommand{\RC}{\mathrm{RC}}

\newcommand{\runinhead}[1]{%
  \par\addvspace{3.25ex plus 1ex minus .2ex}%
  \noindent\textbf{#1.}\quad\ignorespaces}

\newif\ifanonymous \anonymousfalse

\title{Graph-monotone entrywise guarantees for MLE and Rank Centrality on general comparison graphs} \ifanonymous
  \author{Anonymous authors}
\else
  \author{Yuepeng Yang$^{1}$ \qquad Cong Ma$^{2}$\\[3pt]
    \small $^{1}$Department of Statistics and Data Science, the Wharton School,\\[-1pt] \small University of Pennsylvania\\[1pt] \small $^{2}$Department of Statistics, University of Chicago\\[2pt]} \fi
\date{October 6, 2026}
\hypersetup{
  pdftitle={Graph-monotone entrywise guarantees for MLE and Rank Centrality on general comparison graphs},
}
\ifanonymous
  \hypersetup{pdfauthor={Anonymous authors}}
\else
  \hypersetup{pdfauthor={Yuepeng Yang and Cong Ma}}
\fi

\begin{document}
\maketitle

\begingroup
  \renewcommand{\thefootnote}{}
  \def\UrlBreaks{\do\/\do\-\do\.}
  \footnotetext[0]{Experiment code and Lean formalization:
    \url{https://github.com/YuepengYang/pairwise-ranking-general-graphs}.}
\endgroup

\begin{abstract}
  Pairwise comparisons are widely used to infer latent scores and identify top-ranked items.
  Although sharp statistical guarantees are available under uniform sampling, real data often induce irregular comparison graphs with heterogeneous observation counts across pairs.
  In this paper, we study an arbitrary fixed comparison graph under the Bradley--Terry--Luce model with minimal assumptions.
  We prove that both the standard maximum likelihood estimator and Rank Centrality achieve a high-probability entrywise error rate of order $1/\sqrt{\lambda_{\cD}}$ up to logarithmic and dynamic-range factors, where $\lambda_{\cD}$ is the algebraic connectivity of the count-weighted observation graph.
  This guarantee is graph-monotone since $\lambda_{\mathcal D}$ cannot decrease when additional comparisons are added. 
  Under heterogeneous sampling, where comparison pairs are sampled independently with unequal probabilities, our guarantee improves existing error bounds or requires weaker assumptions.
  We further extend our analysis to show that both estimators are robust against outcome-adaptive augmentation, where an adversary can choose additional comparison pairs after observing the initial outcomes. 
\end{abstract}

\input{sections/01_introduction}
\input{sections/02_preliminaries}
\input{sections/03_entrywise_guarantees}
\input{sections/04_outcome_adaptive_augmentation}

\input{sections/05_experiments}
\input{sections/06_discussion}

\subsection*{Acknowledgments}
C.M.~was partially supported by the National Science Foundation via the CAREER
Award \mbox{DMS-2443867}.

\subsection*{AI disclosure}
AI tools were used to assist in the preparation of this manuscript.
The authors take full responsibility for the accuracy of its content, including all mathematical arguments, numerical results, and references.

\bibliographystyle{plainnat}
\bibliography{references}

\appendix
\input{sections/a_shared_tools}

\input{sections/b_mle_proof}
\input{sections/c_rank_centrality_proof}
\input{sections/d_outcome_adaptive_proofs}
\input{sections/e_experimental_implementation}

\end{document}

%% file: sections/01_introduction.tex
\section{Introduction}
\label{sec:introduction}

Pairwise comparisons are widely used to estimate latent scores and identify highly ranked items in applications such as sports tournaments \citep{Zermelo1929,Cattelan2012}, choice experiments \citep{Grossmann2019PartialProfile}, and comparative evaluation of large language models \citep{ChiangEtAl2024Arena}.

A classical and popular way to model pairwise comparisons is the Bradley--Terry--Luce (BTL) model \citep{BradleyTerry1952,Luce1959}.
Given $n$ items, the model assumes that each item $i$ has a latent score $\theta_i^\star$ and that item $i$ defeats item $j$ with probability
\begin{equation}
  \Pp\{i\text{ defeats }j\}
  =\frac{\exp(\theta_i^\star)}
  {\exp(\theta_i^\star)+\exp(\theta_j^\star)}.
  \label{eq:intro-btl}
\end{equation}
Given a number of observed pairwise comparisons, a central statistical task is to aggregate local pairwise comparison outcomes into accurate global estimates of the latent scores $\bm\theta^\star=(\theta_1^\star,\ldots,\theta_n^\star)$ and identify the top-$K$ items.

Most sharp entrywise guarantees for these tasks assume that comparison pairs are sampled according to an Erd\H{o}s--R\'enyi graph \citep[e.g.,][]{ChenSuh2015,JangKimSuhOh2016,ChenFanMaWang2019,ChenGaoZhang2022}, so that each edge is sampled independently with the same probability.
In reality, the edge sampling process can be less regular and possibly dependent. 
An entrywise guarantee for a general comparison design is therefore desirable.
Most existing results for general comparison graphs, however, either require strong structural assumptions or provide weaker guarantees.
To address this gap, we consider the following research question:

\begin{quote}
  Can we establish a strong entrywise guarantee for general comparison designs under minimal conditions?
\end{quote}
In particular, we want to understand what properties of the comparison design control the entrywise estimation error.

\subsection{Our contributions}
\label{sec:intro-contributions}
We now present an overview of our main results.
Consider a general connected comparison graph $G=(V,E)$ with $n$ vertices, where each edge $\{i,j\}\in E$ represents a pair of items that is independently compared $L_{ij}$ times.
We analyze two classical algorithms, the maximum likelihood estimator (MLE) and Rank Centrality, for entrywise score estimation and exact top-$K$ recovery.
For clarity, we suppress dynamic-range factors here and leave them to the formal statements.
\runinhead{Graph-monotone entrywise guarantees on general comparison designs}
For every fixed comparison design $\cD=(G,\{L_{ij}\})$, we establish a common entrywise error guarantee for the MLE and Rank Centrality.
Define the count-weighted observation Laplacian and its algebraic connectivity by
\begin{equation}
  \bm L_{\cD}
  :=\sum_{\{i,j\}\in E}L_{ij}
  (\bm e_i-\bm e_j)(\bm e_i-\bm e_j)^\top,
  \qquad
  \lambda_{\cD}:=\lambda_2(\bm L_{\cD}).
  \label{eq:intro-observation-laplacian}
\end{equation}
Here $\lambda_2$ is the second-smallest eigenvalue.
We show that whenever $\lambda_{\cD}\gtrsim\log n$, both estimators are well defined with high probability and satisfy
\begin{equation}
  \norm{\wh{\bm\theta}^{a}-\bm\theta^\star}_\infty
  \lesssim
  \frac{\log^{3/2}n}{\sqrt{\lambda_{\cD}}},
  \qquad a\in\{\MLE,\RC\}.
  \label{eq:intro-rate}
\end{equation}
In particular, they exactly recover the top-$K$ set when $\Delta_K^2\gtrsim\log^3n/\lambda_{\cD}$, where $\Delta_K$ is the gap between the $K$th and $(K+1)$th largest scores.
Compared with existing results for general comparison graphs, our results provide a strong guarantee while requiring only minimal dynamic-range assumptions and the condition $\lambda_\cD\gtrsim \log n$.
We show through the example of heterogeneous sampling, where each edge is sampled with a probability in $[p, q]$, that our guarantee either gives sharper error bounds or requires weaker assumptions than existing results.
Moreover, the guarantees are graph-monotone, in the sense that adding comparisons can only increase $\lambda_{\cD}$ and therefore improve the bound.

\runinhead{Robustness to outcome-adaptive augmentation}
We extend our general-design guarantees to outcome-adaptive augmentation.
We consider the scenario where an adversary observes the initial comparison outcomes and chooses additional comparisons, while the estimator only sees the final augmented data.
We show that both estimators obtain the entrywise guarantee \eqref{eq:intro-rate} with the algebraic connectivity of the final design.
This shows that both MLE and Rank Centrality are robust to outcome-adaptive adversarial augmentation.

In particular, this applies to the semirandom sampling model, where the initial comparison design is an Erd\H{o}s--R\'enyi graph with $L$ comparisons per edge.
Our result resolves the open problem in \citet{YangChenOrecchiaMa2024}, showing that reweighting is not necessary to address this problem.

\subsection{Related work}
\label{sec:intro-related}

\paragraph*{BTL model and entrywise guarantees.}

The BTL model provides a classical probabilistic formulation of ranking from binary comparisons \citep{BradleyTerry1952,Luce1959}.
Two standard estimators for this model are the maximum likelihood estimator and Rank Centrality \citep{Ford1957,NegahbanOhShah2017}.
Under homogeneous Erd\H{o}s--R\'enyi sampling and bounded dynamic range, Rank Centrality and MLE attain sharp entrywise error bounds and minimax-optimal sample complexity for exact top-$K$ recovery \citep{ChenSuh2015,JangKimSuhOh2016,ChenFanMaWang2019,HanYeTanChen2020,GaoShenZhang2023}.

\runinhead{Ranking on general comparison graphs}
Going beyond uniform sampling, existing estimation guarantees impose graph regularity or sample-size conditions \citep{YanYangXu2012,Chen2023General,HanXuChen2023,FanLouWangYu2026,LiXu2026}, or involve degree parameters in addition to algebraic connectivity \citep{LiShrotriyaRinaldo2022}.
Our results give graph-monotone entrywise guarantees for both MLE and Rank Centrality with graph dependence only through the count-weighted algebraic connectivity.

\runinhead{Heterogeneous and semirandom sampling}
Recent work studies ranking from pairwise comparisons in the middle ground between uniform sampling and fully general comparison designs.
\citet{YangChenOrecchiaMa2024} obtain top-$K$ guarantees using an SDP-reweighted MLE for the semirandom setting, while \citet{LeeMakurSingh2026} analyze Rank Centrality under heterogeneous sampling, requiring additional regularity for the unweighted estimator and obtaining a suboptimal bound in sparse regimes for the weighted estimator.
In contrast, our result establishes robustness to outcome-adaptive augmentation of general initial designs for the standard MLE and Rank Centrality without reweighting.

Adding comparisons based on existing observations also arises in active ranking and synthetic data augmentation.
Active ranking algorithms adaptively select further comparisons to improve sample efficiency \citep{MohajerSuhElmahdy2017,GoldbergerRudi2026}, while \citet{JinYu2026} analyze MLE under iterative augmentation with synthetic comparisons generated from fitted BTL models.
These works use beneficial data augmentation, while our semirandom guarantee handles adversarial augmentation.

\subsection{Organization}
\Cref{sec:model} introduces the model and the two estimators.
\Cref{sec:results} gives the general-graph entrywise error bounds, as well as an application and the analysis.
\Cref{sec:random-design-models} establishes robustness to outcome-adaptive augmentation, with semirandom and heterogeneous sampling as special cases.
\Cref{sec:experiments} reports the real-data and numerical studies.
Finally, \Cref{sec:discussion} concludes with a discussion of future directions.

\subsection{Notation}
For an integer $n\ge1$, let $[n]=\{1,\ldots,n\}$.
For a finite set $S$, we write $|S|$ for its cardinality and $S^c$ for its complement.
For a scalar $x$, let $(x)_+:=\max\{x,0\}$.
We write $\Pp$ and $\E$ for probability and expectation, respectively, and $\ind\{\cA\}$ for the indicator of an event $\cA$.
We use bold lowercase letters for vectors and bold uppercase letters for matrices.
Here, $\bm e_i$ is the $i$th standard basis vector, $\one_n$ is the all-ones vector, and $\bm I$ is the identity matrix.
For a vector $\bm x$, $\norm{\bm x}_p$ denotes its $\ell_p$ norm and $\supp(\bm x):=\{i:x_i\ne0\}$, while $\norm{\bm A}$ denotes the operator norm of a matrix.
Scalar functions applied to vectors are understood coordinatewise, and $\log$ denotes the natural logarithm.
For symmetric matrices, $\bm A\succeq\bm B$ means that $\bm A-\bm B$ is positive semidefinite. 
Unless explicitly fixed, $c,C>0$ denote universal constants whose values may change from line to line.
For positive quantities $a$ and $b$, we write $a\lesssim b$ when $a\le Cb$, $a\gtrsim b$ when $b\lesssim a$, and $a\asymp b$ when both relations hold.
For positive sequences, we write $a\ll b$ when $a=o(b)$, meaning that $a/b\to0$ along the asymptotic regime under consideration, and $a\gg b$ when $b\ll a$.
We use $\{\cdot,\cdot\}$ to denote an unoriented edge and $(\cdot,\cdot)$ to denote an oriented edge.
Finally, $\widetilde O(\cdot)$ suppresses polylogarithmic factors in the relevant problem dimensions.

%% file: sections/02_preliminaries.tex
\section{Preliminaries}
\label{sec:model}

\subsection{BTL model and exact top-\texorpdfstring{$K$}{K} identification}

Assuming $n\ge3$, let $G=([n],E)$ be an arbitrary fixed connected undirected graph.
For each edge $e=\{i,j\}$, fix an arbitrary orientation $e=(i,j)$ and write $\bm b_e=\bm e_i-\bm e_j$. 
Let $L_e=L_{ij}=L_{ji}\ge1$ denote the number of comparisons on edge $e$. 
We denote the comparison design by $\cD=(G,\{L_e\}_{e\in E})$ and its weighted Laplacian and algebraic connectivity by
\begin{equation}
  \bm L_{\cD}
  :=\sum_{\{i,j\}\in E}
  L_{ij}\cdot(\bm e_i-\bm e_j)(\bm e_i-\bm e_j)^\top, \qquad \lambda_{\cD}:=\lambda_2(\bm L_{\cD}).
  \label{eq:observation-laplacian}
\end{equation}
Since $G$ is connected, $\lambda_{\cD}>0$.

For identifiability, let the latent score vector $\bm\theta^\star$ lie in $\cH:=\{\bm\theta\in\R^n:\one_n^\top\bm\theta=0\}$.
We write $\alpha_i^\star:=\exp(\theta_i^\star)$ and define the dynamic range as
\begin{equation}
  \kappa:=\frac{\max_i\alpha_i^\star}{\min_i\alpha_i^\star}
  =\exp\!\left(\max_i\theta_i^\star-\min_i\theta_i^\star\right).
  \label{eq:dynamic-range}
\end{equation}
Here, $\kappa$ is the dynamic range of the latent scores.

On each oriented edge $e=(i,j)$, we observe
\begin{equation}
  Y_e^{(1)},\ldots,Y_e^{(L_e)} \stackrel{\mathrm{ind}}{\sim} \operatorname{Bernoulli}\!
  \left(
  \sig(\bm b_e^\top\bm\theta^\star)
  \right),
  \qquad
  \overline Y_e \coloneqq L_e^{-1}\sum_{\ell=1}^{L_e}Y_e^{(\ell)}, 
  \label{eq:btl}
\end{equation}
independently across edges and repetitions, where $\sigma(x) := e^x/(1+e^x)$ is the sigmoid function.

Fix $1\le K\le n-1$ and let $\theta_{(1)}^\star\ge\cdots\ge\theta_{(n)}^\star$ denote the ordered coordinates of $\bm\theta^\star$.
Assume $\theta_{(K)}^\star>\theta_{(K+1)}^\star$ and define
\begin{equation}
  S_K^\star:=\{i\in[n]:\theta_i^\star\ge\theta_{(K)}^\star\},
  \qquad
  \Delta_K:=\theta_{(K)}^\star-\theta_{(K+1)}^\star>0.
  \label{eq:topk-gap-definition}
\end{equation}
For any score estimate $\wh{\bm\theta}$, let $\Top_K(\wh{\bm\theta})$ be the indices of its $K$ largest coordinates.
Our goal is to estimate the latent scores $\bm\theta^\star$ and identify the top-$K$ set from the observed comparison outcomes.

\subsection{MLE and Rank Centrality}
We introduce two standard estimators for the latent scores: the maximum likelihood estimator and Rank Centrality.

\paragraph*{Maximum likelihood estimator.}
The negative log-likelihood is
\begin{equation}
  \cL(\bm\theta)
  :=\sum_{e\in E}
  L_e\left\{ \log\!
  \left(1+\e^{\bm b_e^\top\bm\theta}\right)
  -\overline Y_e\,\bm b_e^\top\bm\theta
  \right\}.
  \label{eq:mle-loss}
\end{equation}
The MLE is
\begin{equation}
  \wh{\bm\theta}^{\MLE}
  \in\arg\min_{\bm\theta\in\cH}\cL(\bm\theta).
  \label{eq:mle-estimator}
\end{equation}

\paragraph*{Rank Centrality.}
For an observed edge with fixed orientation $e=(i,j)$, define $\wh q_{ij}\coloneqq 1-\overline Y_e$ and $\wh q_{ji}:=1-\wh q_{ij}=\overline Y_e$.
Choose any $d\ge \max_{i\in[n]}\sum_{j:\{i,j\}\in E}L_{ij}$ and define the row-stochastic matrix $\wh{\bm P}(d)=(\wh P_{ij}(d))_{i,j\in[n]}$ by
\begin{equation}
  \wh P_{ij}(d)=\frac{L_{ij}\wh q_{ij}}{d}
  \quad(i\ne j,\ \{i,j\}\in E),
  \qquad
  \wh P_{ii}(d)=1-\sum_{j\ne i}\wh P_{ij}(d),
  \label{eq:rank-centrality-matrix}
\end{equation}
with all other entries set to zero.
This row-stochastic matrix defines the empirical Rank Centrality Markov chain on the items.
The empirical transition matrix is a plug-in estimator of its population counterpart.
Define
\[
  q_{ij}^\star:=\E[\wh q_{ij}]
  =\frac{\alpha_j^\star}{\alpha_i^\star+\alpha_j^\star},
  \qquad
  \pi_i^\star:=\frac{\alpha_i^\star}{\sum_{k=1}^n\alpha_k^\star},
\]
and let $\bm P^\star(d)$ be the population transition matrix obtained by replacing $\wh q_{ij}$ with $q_{ij}^\star$ in \eqref{eq:rank-centrality-matrix}.
For every observed edge $\{i,j\}$, $ \pi_i^\star P_{ij}^\star(d) =\pi_j^\star P_{ji}^\star(d)$, so detailed balance shows that $\bm\pi^\star$ is stationary for $\bm P^\star(d)$.
Moreover, the centering constraint on $\bm\theta^\star$ gives
\[
  \log\pi_i^\star-\frac1n\sum_{j=1}^n\log\pi_j^\star
  =\theta_i^\star.
\]
Rank Centrality applies the same idea to the empirical chain $\wh{\bm P}(d)$.
When the empirical chain is irreducible, it has a unique stationary distribution $\wh{\bm\pi}$ with strictly positive entries, satisfying $\wh{\bm\pi}^\top\wh{\bm P}(d)=\wh{\bm\pi}^\top$ and $\one_n^\top\wh{\bm\pi}=1$.
It is easy to see that $\wh \pi$ is the same for any $d\ge \max_{i\in[n]}\sum_{j:\{i,j\}\in E}L_{ij}$.
On this event, the Rank Centrality estimator is uniquely defined by
\begin{equation}
  \wh\theta_i^{\RC}
  :=\log\wh\pi_i-\frac1n\sum_{j=1}^n\log\wh\pi_j.
  \label{eq:rc-estimator}
\end{equation}

%% file: sections/03_entrywise_guarantees.tex
\section{Entrywise guarantees on general designs}
\label{sec:results}
In this section, we present the main entrywise guarantees, an overview of the proof strategy, and an application.
To simplify the notation, we use $C>0$ to denote a universal constant whose value may change from line to line throughout the paper.

\subsection{Entrywise guarantees and \texorpdfstring{top-$K$}{top-K} recovery}

Recall that $\bm{L}_{\cD}$ is the Laplacian defined in \eqref{eq:intro-observation-laplacian} and $\lambda_{\cD} = \lambda_2(\bm{L}_{\cD})$ is the algebraic connectivity of $\cD$.
\begin{theorem}[Entrywise guarantees for MLE and Rank Centrality]
  \label{thm:general-entrywise}
  Fix $n\ge3$, a connected comparison design $\cD=(G,\{L_e\}_{e\in E})$ with $G=([n],E)$ and positive integer counts $L_e$, and $\bm\theta^\star\in\cH$ with dynamic range $\kappa$.
  Let $C$ and $C_{\mathrm{inf}}$ be sufficiently large absolute constants.
  Suppose that   
  \begin{equation}
    \lambda_{\cD}\ge C\kappa^2\log n.
  \label{eq:general-baseline}
  \end{equation}
  Then, with probability at least $1-Cn^{-10}$, both the MLE and Rank Centrality estimators exist uniquely and satisfy
  \begin{equation}
    \max_{a\in\{\MLE,\RC\}}
    \norm{\wh{\bm\theta}^{a}-\bm\theta^\star}_\infty
    \le
    C_{\mathrm{inf}}\kappa\frac{\log^{3/2}n}{\sqrt{\lambda_{\cD}}}.
    \label{eq:general-entrywise}
  \end{equation}
\end{theorem}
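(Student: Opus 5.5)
We would prove the two halves separately, each by a leave-one-out analysis adapted to an arbitrary weighted graph. The only graph quantity we allow ourselves to use is the quadratic-form lower bound $\bm x^\top\bm L_{\cD}\bm x\ge\lambda_{\cD}\norm{\bm x}_2^2$ for $\bm x\in\cH$. In particular, we will avoid degree-regularity arguments.

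\textbf{MLE.} The first step is an $\ell_2$ bound. On the set $\{\norm{\bm\theta-\bm\theta^\star}_\infty\le 1\}$, the Hessian satisfies $\nabla^2\cL\succeq c\kappa^{-1}\bm L_{\cD}$. The gradient $\nabla\cL(\bm\theta^\star)=\sum_e L_e(\sig(\bm b_e^\top\bm\theta^\star)-\overline Y_e)\bm b_e$ is a sum of independent bounded vectors. Bounding it in the $\bm L_{\cD}^{\dagger}$-norm, via $\E\norm{\bm L_{\cD}^{\dagger/2}\nabla\cL}_2^2\le \mathrm{tr}(\bm L_{\cD}^{\dagger}\bm L_{\cD})=n-1$ plus concentration, gives $\norm{\wh{\bm\theta}-\bm\theta^\star}_{\bm L_{\cD}}\lesssim\kappa\sqrt{n\log n}$. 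Existence and uniqueness come from a convexity/localization argument once this is smaller than the local region.

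The $\ell_2$ bound alone is too weak for the entrywise claim, so for each $m$ we form a leave-one-out loss $\cL^{(m)}$. In it, the outcomes on edges incident to $m$ are replaced by their expectations, and its minimizer $\wh{\bm\theta}^{(m)}$ is independent of the data at $m$. We then control two things.
\begin{itemize}
\item The difference $\norm{\wh{\bm\theta}-\wh{\bm\theta}^{(m)}}_{\bm L_{\cD}}$, via strong convexity. It is driven by the perturbation $\sum_{j}L_{mj}(\overline Y_{mj}-\sig(\cdot))\bm b_{mj}$, whose $\bm L_{\cD}^{\dagger}$-norm is $\lesssim\sqrt{d_m\log n}\cdot d_m^{-1/2}$-type. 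We will bound it using $(\bm e_m-\bm e_j)^\top\bm L_{\cD}^\dagger(\bm e_m-\bm e_j)\le 2/\lambda_{\cD}$ together with effective-resistance sums, aiming for $\lesssim\kappa\sqrt{\log n}$.
\item The coordinate $m$, via the first-order condition $\partial_m\cL(\wh{\bm\theta})=0$, which reads $\sum_j L_{mj}\bigl(\sig(\wh\theta_m-\wh\theta_j)-\overline Y_{mj}\bigr)=0$. Linearizing in $\wh\theta_m-\theta^\star_m$ with curvature $\gtrsim\kappa^{-1}d_m$, and using independence of $\wh{\bm\theta}^{(m)}$ from the row-$m$ noise, bounds $|\wh\theta_m-\theta_m^\star|$ by a term of order $\sqrt{\log n/d_m}$ plus a weighted average of $|\wh\theta_j-\theta_j^\star|$ over neighbors.
\end{itemize}

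\textbf{Main obstacle.} The neighbor average is where the argument is hardest. In Erd\H{o}s--R\'enyi analyses it is handled by $\ell_2\to\ell_\infty$ with regular degrees; here degrees $d_m$ may be as small as $\lambda_{\cD}$ or as large as $n$. We plan to avoid degrees entirely by writing $\wh\theta_m-\theta^\star_m$, via the linearized KKT system, as $\bm e_m^\top\bm L_{w}^{\dagger}\bm g$ restricted to $\cH$. Here $\bm L_w\succeq c\kappa^{-1}\bm L_{\cD}$ is a data-dependent weighted Laplacian and $\bm g$ is the noise. The quantity $\bm e_m^\top\bm L^{\dagger}\bm g$ with independent edge noise is sub-Gaussian with variance $\bm e_m^\top\bm L^\dagger\bm L_{\cD}\bm L^\dagger\bm e_m\lesssim\kappa^2/\lambda_{\cD}$. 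The leave-one-out decoupling is used to freeze the weights, and a union over $m$ gives $\sqrt{\log n}$. The remaining $\log n$ factor in $\log^{3/2}n$ absorbs the nonlinearity remainder, which we will bound by a bootstrapping/induction on the $\ell_\infty$ radius, starting from the crude bound and contracting under $\lambda_{\cD}\ge C\kappa^2\log n$.

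\textbf{Rank Centrality.} We run the parallel argument. Irreducibility of $\wh{\bm P}$ follows since every observed edge has $\wh q_{ij}\in(0,1)$ with high probability or, if not, via connectivity under the condition. The spectral gap of $\bm P^\star$ in $\ell_2(\bm\pi^\star)$ is $\gtrsim\kappa^{-1}\lambda_{\cD}/d$, since its Dirichlet form dominates $\kappa^{-1}\bm L_{\cD}/(nd)$ after $\pi$-weighting. The perturbation argument $\wh{\bm\pi}-\bm\pi^\star=(\wh{\bm\pi}^\top(\wh{\bm P}-\bm P^\star))(\bm I-\bm P^\star)^{-1}$ on the orthogonal complement, scaled by $d$, reduces to the same Laplacian-pseudoinverse expression $\bm L^\dagger\bm g$, now with the $\pi^\star$-weighted Laplacian. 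Hence the same leave-one-out entrywise bound on $\wh\pi_m/\pi^\star_m-1$ holds, and since $\log$ is Lipschitz near $1$ it transfers to $\wh{\bm\theta}^{\RC}$ after centering. Finally, a union bound over the $O(n)$ leave-one-out events gives probability $1-Cn^{-10}$.
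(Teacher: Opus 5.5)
\textbf{There is a genuine gap.} It sits where you try to get around the neighbor-average obstacle.

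\emph{The decoupling step.} With data-dependent weights (e.g.\ the secant weights $L_e\{\sig(\bm b_e^\top\wh{\bm\theta})-\sig(\bm b_e^\top\bm\theta^\star)\}/\bm b_e^\top\bm\delta$ from the score identity), your representation $\wh\theta_m-\theta_m^\star=\pm\bm e_m^\top\bm L_w^{\dagger}\bm g$ is even exact. But it expands as $\sum_{e\in E}(\bm e_m^\top\bm L_w^{\dagger}\bm b_e)\zeta_e$, a sum over \emph{every} edge. Its coefficients depend on $\wh{\bm\theta}$, hence on the noise of every edge. Building the weights from $\wh{\bm\theta}^{(m)}$ removes only the dependence on edges incident to $m$. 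The coefficients therefore stay correlated with almost all $\zeta_e$, and the variance computation $\bm e_m^\top\bm L^\dagger\bm L_{\cD}\bm L^\dagger\bm e_m$ is unjustified. Leave-one-out decouples the \emph{local} row-$m$ equation, and passing to the global pseudoinverse discards that locality. If you instead freeze the weights at $\bm\theta^\star$, the natural bound on the quadratic remainder is $(\kappa/\lambda_{\cD})\sum_eL_e(\bm b_e^\top\bm\delta)^2$. Even granting your $\ell_2$ bound, this is of order $\kappa^3n\log n/\lambda_{\cD}$, far above the target when $\lambda_{\cD}\lesssim n$.

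\emph{The localization is circular.} Both $\bm L_w\succeq c\kappa^{-1}\bm L_{\cD}$ and your Hessian bound need $|\bm b_e^\top\bm\delta|\lesssim1$ on every edge. Your $\ell_2$ bound only gives $\norm{\bm\delta}_\infty\le\norm{\bm\delta}_2\lesssim\kappa\sqrt{n\log n/\lambda_{\cD}}$, which certifies this only when $\lambda_{\cD}\gtrsim\kappa^2n\log n$. So in the regime the theorem targets, your bootstrap has no valid starting radius and no exhibited contraction.

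\emph{Further problems.} Your leave-one-out proximity bound fails at hubs. The squared $\bm L_{\cD}^\dagger$-norm of the row-$m$ perturbation has mean of order $\sum_jL_{mj}(\bm e_m-\bm e_j)^\top\bm L_{\cD}^\dagger(\bm e_m-\bm e_j)$. For a vertex joined to every vertex of a $k$-regular expander (where $\lambda_{\cD}\asymp k$), this is of order $n/k\gg\log n$. The Rank Centrality half inherits the decoupling problem, since $\wh{\bm\pi}^\top(\wh{\bm P}-\bm P^\star)$ multiplies the noise by the data-dependent $\wh{\bm\pi}$. Also, your first irreducibility argument fails when $L_e=1$, because then $\wh q_{ij}\in\{0,1\}$.

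\textbf{The missing idea} is to avoid linearization, a priori localization, and leave-one-out altogether. The paper tests the exact score identity against the truncated level-set vector $v_i=\min\{(\delta_i-\mu-t)_+,a\}$, where $\mu$ is a median of the coordinates of $\bm\delta$ and $a\le1$. Since $\bm b_e^\top\bm v$ has the sign of $\bm b_e^\top\bm\delta$ and $|\bm b_e^\top\bm v|\le\min\{|\bm b_e^\top\bm\delta|,1\}$, the logistic increment bound gives $\frac1{8\kappa}\bm v^\top\bm L_{\cD}\bm v\le-\bm g_{\MLE}^\top\bm v$ with no control on $\bm\delta$ required.

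The data dependence of $\bm v$ is absorbed by a bound holding simultaneously for all vectors supported on $s\le n/2$ coordinates: $|\bm g_{\MLE}^\top\bm v|\lesssim\sqrt{s\log n}\,(\bm v^\top\bm L_{\cD}\bm v)^{1/2}$. This comes from a $5^s$-net in the $\bm L_{\cD}$-seminorm and a union over the $\binom ns$ supports. The sparse Poincar\'e inequality $\bm v^\top\bm L_{\cD}\bm v\ge\frac{\lambda_{\cD}}2\norm{\bm v}_2^2$ then gives $|\{i:\delta_i>\mu+t+a\}|\le\frac14|\{i:\delta_i>\mu+t\}|$ for $a\asymp\kappa\sqrt{\log n/\lambda_{\cD}}$. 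Running $O(\log n)$ such steps on both sides of $\mu$, then centering, gives the $\log^{3/2}n$ rate.

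Rank Centrality is handled the same way: integrate the stationary-flow identity for $r_i\propto\wh\pi_i/\alpha_i^\star$ over level sets, using multiplicative steps. Existence of both estimators follows from strong connectivity of the win digraph. That is proved from the cut bound $\sum_{e\in\partial S}L_e\ge\lambda_{\cD}|S|(n-|S|)/n$ and a union over cuts.
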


These $\ell_\infty$ error guarantees naturally imply exact top-$K$ recovery whenever the gap $\Delta_K$ in \eqref{eq:topk-gap-definition} is sufficiently large relative to the entrywise error.

\begin{corollary}[Exact top-$K$ recovery]
  \label{cor:general-topk}
  Under \eqref{eq:general-baseline}, both the MLE and Rank Centrality exactly recover the top-$K$ set with probability at least $1-Cn^{-10}$ whenever
  \begin{equation}
    \lambda_{\cD}
    >
    C \kappa^2\log n\cdot \max\left\{1, \frac{\log^2 n}{\Delta_K^2}\right\}.
    \label{eq:general-topk-samples}
  \end{equation}
\end{corollary}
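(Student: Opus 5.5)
The corollary is a direct consequence of \cref{thm:general-entrywise}. The plan is to work on the high-probability event of that theorem and then do a deterministic comparison of the entrywise error with half the gap $\Delta_K$.

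\textbf{Step 1 (reduce to the theorem's event).} Condition \eqref{eq:general-topk-samples} contains the factor $\max\{1,\cdot\}$, so it implies \eqref{eq:general-baseline} with the same constant $C$ (enlarging $C$ if needed). \Cref{thm:general-entrywise} therefore applies. With probability at least $1-Cn^{-10}$, both $\wh{\bm\theta}^{\MLE}$ and $\wh{\bm\theta}^{\RC}$ exist uniquely and satisfy
\[
  \norm{\wh{\bm\theta}^{a}-\bm\theta^\star}_\infty\le \varepsilon:=C_{\mathrm{inf}}\kappa\frac{\log^{3/2}n}{\sqrt{\lambda_{\cD}}},\qquad a\in\{\MLE,\RC\}.
\]
All remaining steps are deterministic on this event.

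\textbf{Step 2 (the gap dominates the error).} Condition \eqref{eq:general-topk-samples} gives $\lambda_{\cD}>C\kappa^2\log^3 n/\Delta_K^2$, which rearranges to
\[
  \Delta_K> \sqrt{C}\,\kappa\log^{3/2}n/\sqrt{\lambda_{\cD}}.
\]
We choose the constant $C$ in the corollary so that $\sqrt{C}\ge 2C_{\mathrm{inf}}$. This yields $\Delta_K>2\varepsilon$.

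\textbf{Step 3 (separation of the top-$K$ set).} Take any $i\in S_K^\star$ and $j\notin S_K^\star$. Then $\theta_i^\star-\theta_j^\star\ge\Delta_K$, so
\[
  \wh\theta_i^{a}-\wh\theta_j^{a}\ge\Delta_K-2\varepsilon>0.
\]
Every coordinate indexed by $S_K^\star$ is therefore strictly larger than every coordinate outside it. Since $|S_K^\star|=K$ (because $\theta^\star_{(K)}>\theta^\star_{(K+1)}$), the $K$ largest coordinates of $\wh{\bm\theta}^a$ are exactly those in $S_K^\star$, with no ties at the boundary. Hence $\Top_K(\wh{\bm\theta}^{a})=S_K^\star$ for both estimators simultaneously.

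\textbf{Main obstacle.} None of this is hard. The only point needing care is the bookkeeping of constants: the $C$ in \eqref{eq:general-topk-samples} must be large enough both to imply \eqref{eq:general-baseline} and to absorb the factor $(2C_{\mathrm{inf}})^2$ from the theorem. This is allowed because $C$ is a sufficiently large absolute constant that may change from line to line.
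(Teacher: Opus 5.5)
Your proposal is correct and follows the same route as the paper's proof: you verify that \eqref{eq:general-topk-samples} implies \eqref{eq:general-baseline}, invoke \Cref{thm:general-entrywise}, and use $\wh\theta_i^{a}-\wh\theta_j^{a}\ge\Delta_K-2\norm{\wh{\bm\theta}^{a}-\bm\theta^\star}_\infty>0$ for $i\in S_K^\star$, $j\notin S_K^\star$. Your explicit constant choice $\sqrt{C}\ge 2C_{\mathrm{inf}}$ and the check that $|S_K^\star|=K$ fill in details the paper leaves implicit.
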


\begin{proof}
  The condition \eqref{eq:general-topk-samples} implies \eqref{eq:general-baseline}, so \Cref{thm:general-entrywise} applies.
  For either estimator $a\in\{\MLE,\RC\}$ and any $i\in S_K^\star$ and $j\notin S_K^\star$, $\wh\theta_i^{a}-\wh\theta_j^{a} \ge \theta_i^\star-\theta_j^\star -2\|\wh{\bm\theta}^{a}-\bm\theta^\star\|_\infty$.
  By \eqref{eq:topk-gap-definition}, $\theta_i^\star-\theta_j^\star \ge\Delta_K$.
  Combining this with \eqref{eq:general-entrywise} and \eqref{eq:general-topk-samples} proves the claim for both estimators.
\end{proof}

\paragraph*{Graph-monotone general-graph guarantees via $\lambda_\cD$.}
Instead of the usual random sampling setting \citep{ChenSuh2015,JangKimSuhOh2016,ChenFanMaWang2019,ChenGaoZhang2022}, where each edge is sampled independently with equal probability $p$, \Cref{thm:general-entrywise} provides entrywise error guarantees for both MLE and Rank Centrality on general comparison graphs.
Both the assumptions and guarantees are governed by the algebraic connectivity $\lambda_\cD$, which is a graph property directly computable from $\cD$.

For two designs $\cD_i=(([n],E_i),\{L_e^{(i)}\})$, $i\in\{1,2\}$, if $E_1\subseteq E_2$ and $L_e^{(1)}\le L_e^{(2)}$ for every $e\in E_1$, then $\bm{L}_{\cD_1}\preceq \bm{L}_{\cD_2}$ and therefore $\lambda_{\cD_1}\le\lambda_{\cD_2}$.
This implies that the error guarantees in \Cref{thm:general-entrywise} are graph-monotone in the sense that adding comparisons can only improve the bound.

\paragraph*{Minimal assumptions.}
The main assumption of \Cref{thm:general-entrywise} is \eqref{eq:general-baseline}, which is necessary for the stated error bounds to be $\widetilde O(1)$.
The dynamic range $\kappa$ is a standard parameter in the literature that captures how informative the comparison outcomes are, while the algebraic connectivity $\lambda_\cD$ measures how connected the comparison graph is.
In contrast, prior general-graph analyses typically require additional regularity conditions, such as a bounded degree ratio or a large normalized spectral gap, for the guarantees to be satisfactory \citep{NegahbanOhShah2017,LiShrotriyaRinaldo2022,YangChenOrecchiaMa2024,LeeMakurSingh2026}.

We illustrate the strength of our general-design guarantees in the following example.
\begin{example}[Heterogeneous random comparison graph]
  \label{ex:heterogeneous-comparison}
  Consider a sampling regime in which each edge $\{i,j\}$ is sampled independently with probability $p_{ij}\in[p,q]$, where $0<p\le q\le1$, and each sampled edge receives $L$ independent comparisons.
  Since $G(n,p)$ is a subgraph of this comparison graph, standard Erd\H{o}s--R\'enyi spectral concentration \citep[Lemma~6.4]{ChenFanMaWang2019} implies that $\lambda_{\cD}\gtrsim npL$ with high probability when $np\gtrsim\log n$.
  If $npL\gtrsim\kappa^2\log n$, \Cref{thm:general-entrywise} gives
  \begin{equation}
    \max_{a\in\{\MLE,\RC\}}
    \norm{\wh{\bm\theta}^{a}-\bm\theta^\star}_\infty
    \lesssim \frac{\kappa\log^{3/2}n}{\sqrt{npL}}.
    \label{eq:heterogeneous-entrywise}
  \end{equation}
  In the special case $p=q$, this recovers the familiar Erd\H{o}s--R\'enyi rate up to logarithmic factors \citep{ChenFanMaWang2019}.

  \Cref{tab:heterogeneous-entrywise} compares existing guarantees when applied to this graph family, including results for broader comparison models specialized to BTL.
  Here we take $\kappa=O(1)$ and $L=\Theta(1)$, and define the sampling heterogeneity ratio $\rho:=q/p$ and $a_n:=\sqrt{\log n/(np)}$.
  For all results, we assume $np\gtrsim\log n$, which is necessary for the graph to be connected with high probability.

\par\medskip\noindent
\begin{minipage}{\linewidth}
  \centering
  \captionsetup{hypcap=false}
  \captionof{table}{High-probability bounds on $\ell_\infty$ error in \Cref{ex:heterogeneous-comparison}.
  All rows assume $np\gtrsim\log n$.}
  \label{tab:heterogeneous-entrywise}
  \small
  \setlength{\tabcolsep}{4pt}
  \renewcommand{\arraystretch}{1.3}
  \begin{tabular*}{\linewidth}{@{\extracolsep{\fill}}llll@{}}
    \toprule
    Result & Estimator & Error bound & Sampling conditions \\
    \midrule
    \citet{LiShrotriyaRinaldo2022}, Thm.~1
      & Regularized MLE & $\dfrac{\rho}{p\sqrt n}+\sqrt{\rho}\,a_n$
      & $np^2\gtrsim\rho^2$, $np\gtrsim\rho\log n$ \\[2pt]
    \citet{HanXuChen2023}
      & MLE & $\rho\,a_n\log n$
      & $np\gg\rho^2\log^3 n$ \\
    \citet{YangChenOrecchiaMa2024}, Thm.~3
      & MLE & $a_n$
      & $np\gtrsim\rho^4\log^2 n$ \\
    \citet{YangChenOrecchiaMa2024}, Thm.~1
      & SDP-weighted MLE & $a_n$
      & $np\gtrsim\log^3 n$ \\
    \citet{FanLouWangYu2026}, Thm.~4.2
      & RC & $a_n$
      & $\rho=O(1)$, $np\gg\sqrt{n\log n}$ \\
    \citet{LeeMakurSingh2026}, Thm.~3.1
      & RC & $a_n$
      & $\rho=O(1)$ \\
    \citet{LeeMakurSingh2026}, Thm.~4.2
      & SDP-weighted RC & $a_n\bigl(1+\frac{\sqrt n}{np}\bigr)$
      & $np\gtrsim(n\log n)^{1/3}$ \\
    \citet{LiXu2026}, Thm.~4.1
      & MLE & $a_n+\dfrac{1}{p\sqrt n}$
      & $np\gtrsim\sqrt n\,\log^3 n$ \\[2pt]
    \textbf{Our work}, \Cref{thm:general-entrywise}
      & MLE and RC & $a_n\log n$
      & no additional assumption \\
    \bottomrule
  \end{tabular*}
\end{minipage}
\par\medskip

Our bound provides optimal error guarantees up to logarithmic factors for both MLE and Rank Centrality, with no additional assumptions.
In comparison, other existing work requires factors of $\rho=q/p$ in the error bounds \citep{LiShrotriyaRinaldo2022,HanXuChen2023} and sampling conditions \citep{LiShrotriyaRinaldo2022,HanXuChen2023,YangChenOrecchiaMa2024}, gives suboptimal error guarantees in sparse regimes \citep{LeeMakurSingh2026,LiXu2026}, or requires denser sampling regimes \citep{LiShrotriyaRinaldo2022,FanLouWangYu2026,LiXu2026}.
Moreover, it does not require the reweighting used in \citet{YangChenOrecchiaMa2024} and \citet{LeeMakurSingh2026}.
\end{example}

\paragraph*{Proof idea of Theorem~\ref{thm:general-entrywise}.}

For the MLE, let $\bm\delta=\wh{\bm\theta}^{\MLE}-\bm\theta^\star$, let $m$ be a median of its coordinates, and write $S_+(t)=\{i:\delta_i>m+t\}$ for $t\ge0$.
Our main technical step shows that, with high probability, for every $t\ge0$,
\begin{equation}
  \abs{S_+(t+a)}\le\frac14\abs{S_+(t)},
  \qquad
  a=A\kappa\sqrt{\frac{\log n}{\lambda_{\cD}}},
  \label{eq:proof-overview-contraction}
\end{equation}
where $A$ is a sufficiently large universal constant.
Each threshold increase by $a$ removes at least three quarters of the remaining coordinates.
Since $\abs{S_+(0)}\le n/2$, the set is empty after $O(\log n)$ steps.
The largest coordinate of $\bm\delta$ therefore lies at most $O(a\log n)$ above the median.
Repeating the argument below the median and using the centering constraint gives the entrywise rate $\kappa\log^{3/2}n/\sqrt{\lambda_{\cD}}$.

Rank Centrality follows the same principle, except that the level set and the contraction bound \eqref{eq:proof-overview-contraction} take a different form.
The shared tools and full proofs appear in \Cref{sec:shared-tools,sec:mle-proof,sec:rc-proof}.

\FloatBarrier

\subsection{Application: random-pairing Rank Centrality for the Rasch model}
\label{sec:rp-rc}

A key merit of \Cref{thm:general-entrywise} is that it accommodates general comparison designs, including those with nonuniform and dependent edge counts.
We illustrate this flexibility through the Rasch model, where the goal is to estimate item difficulties from binary responses by users of unknown abilities.

Consider $N$ users and $m\ge3$ items.
Each user--item pair $(t,i)$ is observed independently with probability $p$ and, when observed, produces an independent binary response
\[
  \Pp\{X_{ti}=1\}=\sig(\zeta_t^\star-\theta_i^\star),
\]
where $X_{ti}=1$ denotes a correct response, $\zeta_t^\star$ is user ability, and $\theta_i^\star$ is item difficulty.
We estimate $\bm\theta^\star$ under the constraint $\one_m^\top\bm\theta^\star=0$, treating the user abilities $\bm\zeta^\star$ as nuisance parameters.

To eliminate these nuisance parameters, \citet{YangMa2026Rasch} introduced a random-pairing reduction: for each user, draw a uniform random pairing of the observed items and retain pairs with different responses.
The identity
\begin{equation}
  \Pp\!\left\{
    X_{ti}=0,\ X_{tj}=1
    \mathrel{\big|}X_{ti}\ne X_{tj}
  \right\}
  =\sig(\theta_i^\star-\theta_j^\star)
  \label{eq:rp-rc-rasch-reduction}
\end{equation}
makes each retained pair a BTL comparison between item difficulties.
Let $\cD_Y$ denote the resulting item--item comparison design.
Its edge counts are dependent because each response is used in at most one pair.
Conditional on the pairings and retention events, however, the retained comparisons are independent BTL outcomes on $\cD_Y$ \citep[Section~4.1]{YangMa2026Rasch}.
Building on this reduction, we apply Rank Centrality to these comparisons and call the resulting estimator \emph{random-pairing Rank Centrality} (RP-RC).
Despite the dependence, our main theorem applies since it holds for general comparison designs.

\begin{corollary}[Random-pairing Rank Centrality]
  \label{cor:rp-rc}
Define the dynamic ranges
\[
  \kappa_1:=\exp\!\Big(
    \max_{i,j\in[m]}|\theta_i^\star-\theta_j^\star|
  \Big),\qquad
  \kappa_2:=\exp\!\Big(
    \max_{t\in[N],\,i\in[m]}|\zeta_t^\star-\theta_i^\star|
  \Big).
\]
Suppose $mp\ge2$, $m\le N^{100}$, and
\begin{equation}
  Np\ge C\left(
    \kappa_2^2\log N+\kappa_1^2\kappa_2\log m
  \right).
  \label{eq:rp-rc-baseline}
\end{equation}
Then, with probability at least $1-Cm^{-10}-CN^{-10}$, the RP-RC estimator exists uniquely and satisfies
\begin{equation}
  \norm{\wh{\bm\theta}^{\mathrm{RP\text{-}RC}}-\bm\theta^\star}_\infty
  \le C\kappa_1\sqrt{\frac{\kappa_2\log^3m}{Np}}.
  \label{eq:rp-rc-rate}
\end{equation}
\end{corollary}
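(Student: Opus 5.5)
The plan is to apply \Cref{thm:general-entrywise} conditionally on the pairing and retention randomness, with $n=m$, dynamic range $\kappa_1$, and the random design $\cD_Y$. Conditional on the pairings and the retention events, the retained comparisons are independent BTL outcomes on $\cD_Y$ with scores $-\bm\theta^\star$ (or $\bm\theta^\star$ up to orientation, by \eqref{eq:rp-rc-rasch-reduction}). \Cref{thm:general-entrywise} is stated for an arbitrary fixed design, so it applies to the conditional law whenever $\cD_Y$ is connected and $\lambda_{\cD_Y}\ge C\kappa_1^2\log m$. In that case the error is at most $C\kappa_1\log^{3/2}m/\sqrt{\lambda_{\cD_Y}}$ with conditional probability $1-Cm^{-10}$. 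It therefore suffices to show that, with probability at least $1-CN^{-10}-Cm^{-10}$ over users, observations, pairings and retention,
\[
  \lambda_{\cD_Y}\ge c\,\frac{Np}{\kappa_2}.
\]
Given this bound, \eqref{eq:rp-rc-baseline} implies the baseline condition $\lambda_{\cD_Y}\ge C\kappa_1^2\log m$, and substituting into the theorem yields exactly \eqref{eq:rp-rc-rate}.

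To lower bound $\lambda_{\cD_Y}$, write $\bm L_{\cD_Y}=\sum_{t=1}^N\bm L_t$. Here $\bm L_t$ is the Laplacian of the retained pairs of user $t$, a matching on that user's observed items. The $\bm L_t$ are independent across users, each is PSD, and each satisfies $\norm{\bm L_t}\le2$ because a matching has maximum degree one. For the expectation, a fixed pair $\{i,j\}$ is observed for user $t$ with probability $p^2$. Given that both items are observed, they are paired together with probability about $1/(\text{number of observed items})\asymp 1/(mp)$; the assumption $mp\ge2$ and a Chernoff bound on the observed count handle this. The pair is retained with probability $\Pp\{X_{ti}\ne X_{tj}\}\ge c/\kappa_2$, since each success probability lies in $[1/(1+\kappa_2),\kappa_2/(1+\kappa_2)]$. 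Hence $\E\bm L_t\succeq c\,\frac{p}{m\kappa_2}\,(m\bm I-\one\one^\top)$, and on $\one^\perp$ this gives $\lambda_{\min}\bigl(\E\sum_t\bm L_t\bigr)\ge cNp/\kappa_2$.

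Matrix Chernoff, applied to the sum restricted to $\one^\perp$ with summands bounded in norm by $2$, then gives $\lambda_{\cD_Y}\ge \tfrac12\,cNp/\kappa_2$ with failure probability at most $m\exp(-cNp/\kappa_2)$. By \eqref{eq:rp-rc-baseline}, $Np/\kappa_2\ge C(\kappa_2\log N+\kappa_1^2\log m)\ge C\log m$, so this failure probability is at most $m^{-10}$. The $\log N$ term and $m\le N^{100}$ are used only in the expectation step. Specifically, to control the random-pairing probability uniformly I will intersect with the event that every user's observed item count is within constant factors of $mp$ or is small, and I will union bound over the $N$ users. When $mp$ is small, I will instead use the direct bound that the pairing probability is at least $1/(\text{count}-1)$, averaged with Jensen's inequality.

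I expect the expectation computation to be the main obstacle: the pairing probability depends on the random number of observed items, and the retention event is correlated with that count. I will first compute $\E\bigl[1/(|O_t|-1)\mid i,j\in O_t\bigr]\ge 1/\E|O_t|\ge c/(mp)$ by Jensen's inequality, where $|O_t|-2\sim\mathrm{Bin}(m-2,p)$, and then check that the retention event is independent of the pairing given the observed set. Together these give the expectation bound cleanly.
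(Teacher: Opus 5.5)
Your proposal is correct, and its outer layer is exactly the paper's argument: condition on the pairing and retention randomness, apply \Cref{thm:general-entrywise} with $n=m$ and $\kappa=\kappa_1$, and reduce everything to $\lambda_{\cD_Y}\gtrsim Np/\kappa_2$. You differ from the paper only in the spectral step. The paper cites \citet[Lemma~3]{YangMa2026Rasch}, which gives $\lambda_{\cD_Y}\ge Np/(4\kappa_2)$ with probability $1-CN^{-10}$ under the first term of \eqref{eq:rp-rc-baseline}. You instead reprove this bound with matrix Chernoff on $\one^\perp$ for the independent matching Laplacians $\bm L_t$.

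The citation buys brevity. Your route is self-contained and actually needs less. Jensen gives $\E[1/|O_t|\mid i,j\in O_t]\ge 1/(2+(m-2)p)\ge 1/(2mp)$ by $mp\ge2$, and $\Pp\{X_{ti}\ne X_{tj}\}\ge 2\kappa_2/(1+\kappa_2)^2\ge 1/(2\kappa_2)$. Hence each pair is retained by user $t$ with probability at least $p^2\cdot\frac{1}{2mp}\cdot\frac{1}{2\kappa_2}=\frac{p}{4m\kappa_2}$, so $\lambda_2(\E\bm L_{\cD_Y})\ge Np/(4\kappa_2)$ with no event on the counts. The Chernoff failure probability $(m-1)\exp(-cNp/\kappa_2)\le m^{-10}$ then uses only the second term of \eqref{eq:rp-rc-baseline}. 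So, contrary to your remark, your final plan never uses the $\kappa_2^2\log N$ term or $m\le N^{100}$. You can also drop the detour through uniform control of $|O_t|$ with a union bound over users.

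Two small corrections. First, when $|O_t|$ is odd, a uniform pairing that leaves one item out matches $i$ with $j$ with probability $1/|O_t|$, not $1/(|O_t|-1)$. Use $1/|O_t|$ in the Jensen step; the bound above already does. Second, retention is not correlated with $|O_t|$ at all, because responses are independent of the observation pattern and of the pairing.
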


\begin{proof}
  By \citet[Lemma~3]{YangMa2026Rasch}, the first term in the sampling condition \eqref{eq:rp-rc-baseline} gives $\lambda_{\cD_Y}\ge Np/(4\kappa_2)$ with probability at least $1-CN^{-10}$.
  The second term then verifies \eqref{eq:general-baseline}, so conditioning on the realized reduction and applying \Cref{thm:general-entrywise} yields the claim by a union bound.
\end{proof}

RP-RC matches the sampling requirements and entrywise guarantee of RP-MLE \citep{YangMa2026Rasch} up to logarithmic factors.
In comparison, the existing spectral method requires the denser sampling conditions $Np^2\gtrsim\log m$ and $mp\gtrsim\log N$ for a comparable entrywise guarantee \citep[Theorem~3.1]{NguyenZhang2023}.

%% file: sections/04_outcome_adaptive_augmentation.tex
\section{Outcome-adaptive augmentation}
\label{sec:random-design-models}

The main results in \Cref{thm:general-entrywise} provide guarantees for both MLE and Rank Centrality in terms of the algebraic connectivity $\lambda_{\cD}$.
An important observation is that $\lambda_{\cD}$ can only increase when comparisons are added.
This raises a natural robustness question: do the graph-monotone general-design guarantees still hold when the choice of additional comparisons depends on the initial outcomes?

Consider the outcome-adaptive augmentation of a general initial comparison design defined as follows.
\begin{definition}[Outcome-adaptive augmentation]
  \label{def:semirandom}
Fix an initial design $\cD_0=(G_0,\{L_{ij}^{(0)}\})$, where $G_0=([n],E_0)$ and $L_{ij}^{(0)}$ is a positive integer for every $\{i,j\}\in E_0$ and zero otherwise.
\begin{enumerate}[label=(\roman*)]
  \item \emph{Initial outcomes.}
  Draw $L_{ij}^{(0)}$ independent BTL comparisons on every edge $\{i,j\}\in E_0$.

  \item \emph{Adversarial augmentation.}
  The adversary observes $\cD_0$, the initial outcomes, and possibly $\bm\theta^\star$.
  It then chooses nonnegative integers $M_{ij}$ for all pairs $\{i,j\}$, which may be positive for pairs outside $E_0$.
  Conditional on the initial data and these choices, $M_{ij}$ fresh independent BTL comparisons are drawn on each pair $\{i,j\}$.

  \item \emph{Final observations.}
  Let $\cD$ be the final design, with counts $L_{ij}=L_{ij}^{(0)}+M_{ij}$.
  The estimators observe all comparisons together, without knowing which were collected initially.
\end{enumerate}
\end{definition}

A special case of \Cref{def:semirandom} is the heterogeneous sampling model in \Cref{ex:heterogeneous-comparison}.\footnote{See \Cref{prop:heterogeneous-is-semirandom} for a formal justification.}
More generally, \Cref{def:semirandom} allows the adversary to choose additional comparisons based on the initial outcomes and possibly $\bm\theta^\star$.
The resulting dependence between the final design and the initial outcomes prevents a direct application of \Cref{thm:general-entrywise}.

We adapt the general-design analysis to outcome-adaptive augmentation to obtain the following theorem.
It gives an error bound that is governed by the algebraic connectivity of the final design.
Since algebraic connectivity can only increase with additional comparisons, this shows that both MLE and Rank Centrality are robust to outcome-adaptive augmentation. 
\begin{theorem}[Outcome-adaptive augmentation]
  \label{thm:semirandom}
Under \Cref{def:semirandom}, fix $n\ge3$ and $\bm\theta^\star\in\cH$ with dynamic range $\kappa$.
There are sufficiently large universal constants $C,C_{\mathrm{inf}}>0$ such that if
\begin{equation}
  \lambda_{\cD_0}\ge C\kappa^2\log n,
  \label{eq:semirandom-baseline}
\end{equation}
then, uniformly over the adversary, with probability at least $1-Cn^{-10}$, the MLE and Rank Centrality estimators exist uniquely and satisfy
\[
  \max_{a\in\{\MLE,\RC\}}
  \norm{\wh{\bm\theta}^{a}-\bm\theta^\star}_\infty
  \le C_{\mathrm{inf}}\kappa
  \frac{\log^{3/2}n}{\sqrt{\lambda_{\cD}}}.
\]
\end{theorem}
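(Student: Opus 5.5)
The plan is to reuse the level-set contraction argument behind \Cref{thm:general-entrywise}. The one change is to make every probabilistic estimate uniform over the adversary's choices, so that the random design $\cD$ never has to be conditioned on. First decompose the final sufficient statistics. For each pair $\{i,j\}$, the aggregated outcome sum equals the initial sum over $L^{(0)}_{ij}$ trials plus the sum over the $M_{ij}$ fresh trials. We realize the fresh trials by an infinite i.i.d.\ Bernoulli tape $(Z_{ij}^{(\ell)})_{\ell\ge1}$ per pair, drawn independently of the initial data, and let the adversary read the first $M_{ij}$ entries. Thus the only dependence on the adversary enters through the counts $M_{ij}$. Moreover, the noise $\sum_{\ell\le M_{ij}}(Z_{ij}^{(\ell)}-\sigma(\bm b_e^\top\bm\theta^\star))$ is a partial sum of a fixed i.i.d.\ sequence, evaluated at a data-dependent time.

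Next I would identify exactly which random quantities the proof of \Cref{thm:general-entrywise} uses. They are of two kinds. One is a global existence/uniqueness event: connectivity of the empirical Rank Centrality chain, and coercivity of $\cL$, meaning every cut has at least one comparison won in each direction. The other is the concentration bounds driving the contraction \eqref{eq:proof-overview-contraction}: for each level set $S$ and threshold, the noise summed across the cut $(S,S^c)$, weighted by counts, is at most of order $\sqrt{\log n\cdot \mathrm{cut}_{\cD}(S)}$, together with a union bound over the relevant sets. For the augmented part, I would control these cut sums uniformly in the counts. The tool is a maximal (Freedman/Ville-type) inequality for the partial-sum martingale, peeled over dyadic scales of $M_{ij}$. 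This gives, with probability $1-n^{-12}$ for each fixed family of weights, a bound of the form $|\sum_e w_e\sum_{\ell\le M_e}\xi_e^{(\ell)}|\lesssim\sqrt{\log n\sum_e w_e^2M_e}+\log n\max_e|w_e|$ simultaneously over all $M$. The adversary may choose $M$ adaptively, but it cannot escape this bound. For the initial part the design is fixed, so the original estimates apply verbatim.

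Existence and uniqueness use only $\cD_0$. Condition \eqref{eq:semirandom-baseline} with \Cref{thm:general-entrywise} already shows that the initial comparisons alone make every cut two-way won, hence the empirical chain irreducible and the MLE unique. Adding comparisons only adds edges to the win digraph, so these properties persist for $\cD$.

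The main obstacle is the union bound over level sets. In the fixed-design proof, the candidate sets $S_+(t)$ and the cut weights depend on the realized noise and on $\cD$. The union there is presumably over subsets of a given size, with a count of about $\binom{n}{k}$ absorbed by deviations scaling with the cut size, and this relies on $\mathrm{cut}_{\cD}(S)\ge\lambda_{\cD}|S||S^c|/n$. Under adaptivity, $\lambda_{\cD}$ is itself random. I would therefore run the argument with the uniform-in-$M$ tail bound above and union over all subsets $S$, not over designs. The key estimate, that the noise across a cut is at most $\sqrt{|S|\log n\cdot\mathrm{cut}_{\cD}(S)}$ and the signal is at least $\mathrm{cut}_{\cD}(S)\cdot a/\kappa$, then holds for the realized $\cD$ simultaneously. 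Plugging in the deterministic inequality $\mathrm{cut}_{\cD}(S)\ge\lambda_{\cD}|S|/2$ for $|S|\le n/2$ reproduces \eqref{eq:proof-overview-contraction} with $\lambda_{\cD}$, and the rest of the proof is unchanged. If the Rank Centrality analysis uses an $\ell_2$ or spectral perturbation step, I would likewise replace matrix Bernstein by a self-normalized bound valid uniformly over count matrices. I expect this to be the delicate part.
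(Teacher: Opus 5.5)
\textbf{There is a genuine gap: the concentration step you build the adaptive part on is false, and the idea that replaces it is plain conditioning.} Your existence argument and your handling of the initial-stage noise with $\cD_0$ fixed match the paper.

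The failing step is your claimed bound $|\sum_e w_e\sum_{\ell\le M_e}\xi_e^{(\ell)}|\lesssim\sqrt{\log n\sum_e w_e^2M_e}+\log n\max_e|w_e|$, asserted simultaneously over all count vectors $M$. It is false even for fixed weights. Put $w_e=1$ on $k$ pairs and let $M_e=\ind\{\xi_e^{(1)}>0\}$. The left side is then of order $k/\kappa$, while the right side is at most $\sqrt{k\log n}+\log n$, so the bound fails once $k\gg\kappa^2\log n$. Maximal inequalities with peeling control one martingale along its own time index. They say nothing about a supremum over independent stopping indices on many pairs, which is a high-dimensional index set.

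The violating $M$ above reads the tape, and the actual adversary cannot do that. The counts $\{M_{ij}\}$ are a function of $\cD_0$, the initial outcomes, and $\bm\theta^\star$ only. So you can condition on the $\sigma$-field $\mathcal F_{\mathrm{aug}}$ generated by the initial data and the chosen counts. Given $\mathcal F_{\mathrm{aug}}$:
\begin{itemize}
\item The final design $\cD$ and $\lambda_{\cD}$ are fixed. This also makes the net in the $\cD$-norm inside \Cref{lem:localized-process} legitimate; a uniform-in-$M$ approach would face a random net.
\item The fresh score contributions are independent, centered, and satisfy Hoeffding bounds with $M_{ij}\le L_{ij}$. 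Applying \Cref{lem:localized-process} to $\cD$ controls them by $C\tau\sqrt{s\log n}\sqrt{\bm v^\top\bm L_{\cD}\bm v}$.
\item The initial contributions are controlled by the same lemma on the fixed $\cD_0$. Because it is uniform over all vectors supported on at most $n/2$ coordinates, it remains valid for the data-dependent test vectors.
\end{itemize}
Finally, $\bm L_{\cD_0}\preceq\bm L_{\cD}$ merges the two bounds into exactly the localized score bound used in the fixed-design proof. This covers both the MLE ($\tau=1$) and Rank Centrality ($\tau=\kappa$). No uniformity over $M$, no Freedman-type term, and no concern about $\lambda_{\cD}$ being random are needed.

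Your sketch of the deterministic step is also inaccurate, and this affects which concentration you must supply. There is no per-cut signal bound of the form $\mathrm{cut}_{\cD}(S)\cdot a/\kappa$, because edges crossing $\partial S_+(t)$ can have arbitrarily small increments $\delta_i-\delta_j$. Even granting such a bound, pairing it with $\mathrm{cut}_{\cD}(S)\ge\lambda_{\cD}|S|/2$ only constrains $a$; it does not yield $|S_+(t+a)|\le|S_+(t)|/4$.

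The contraction instead comes from testing the score identity against the truncated vector $\min\{(\delta_i-m-t)_+,a\}$, as in \Cref{lem:mle-truncated-energy}. For Rank Centrality the test vector is the cut-integrated $\bm\psi_{t,a}^+(\bm r)$. Either way this is followed by the sparse Poincar\'e inequality. That is why the probabilistic input must be localization over all sparse test vectors, not only over cut indicators. Rank Centrality also needs no spectral perturbation step here, so the ``delicate part'' you anticipated does not arise once you condition.
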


The proof handles the dependence introduced by outcome-adaptive augmentation in two stages.
We first control the noise from the initial outcomes with $\cD_0$ fixed, and then condition on the initial data and chosen counts to control the noise from the fresh outcomes.
The full proof is deferred to \Cref{sec:semirandom-theorem-proof}.

\paragraph*{Semirandom model.}
Another special case of \Cref{def:semirandom} is the semirandom model, where $G_0\sim G(n,p)$ and $L_{ij}^{(0)}=L$ for every $\{i,j\}\in E_0$.
When $np\ge C\log n$ and $npL\ge C\kappa^2\log n$, $\lambda_\cD\ge \lambda_{\cD_0}\gtrsim npL$ (see \Cref{lem:er-gap}).
\Cref{thm:semirandom} then gives $\ell_\infty$ error at most $C\kappa\log^{3/2}n/\sqrt{npL}$ for both estimators, which matches the minimax Erd\H{o}s--R\'enyi rates up to logarithmic factors \citep{ChenFanMaWang2019,ChenGaoZhang2022}.

This semirandom guarantee complements two recent approaches based on reweighting or additional design regularity.
\citet{YangChenOrecchiaMa2024} obtain a near-optimal guarantee for an SDP-reweighted MLE when the adversary chooses the augmentation observing initial comparison graph but not the initial outcomes.
\citet{LeeMakurSingh2026} study Rank Centrality under heterogeneous sampling: their unweighted guarantee requires additional regularity and a favorable normalized spectral gap, whereas their weighted guarantee is suboptimal in the sparse regime $np\ll\sqrt n$.
In comparison with these prior results, \Cref{thm:semirandom} applies to the unweighted MLE and unweighted Rank Centrality without additional design regularity or a normalized-gap assumption.
It also applies to the case where the adversary can observe the initial outcomes and the latent parameter.

%% file: sections/05_experiments.tex
\section{Empirical and numerical experiments}
\label{sec:experiments}

\subsection{LMArena replay}
\label{sec:experiments-arena}

Our main results in \Cref{thm:general-entrywise} show that the entrywise error guarantee depends more on the geometry of the comparison design than on the number of comparisons involving each item.
We illustrate this phenomenon in a real-data study using the LMArena human-preference dataset \citep{ChiangEtAl2024Arena,LMArenaHumanPreference140k}, which contains pairwise comparisons of large language models (LLMs) collected from human judges.

We take the 30 models that have the most comparisons and perform a replay experiment in which we resample two comparison graphs with the same number of comparisons but different geometries:
\begin{itemize}
  \item \emph{Local repetition.}
  Start with a random $10$-regular graph.
  Randomly partition the models once into ten groups of three and then add five comparisons to each of the $30$ within-group pairs at every step.
  \item \emph{Global linking.}
  Start with a random $10$-regular graph.
  Add ten new random perfect matchings at every step.
\end{itemize}
  At step $s\in\{1,\ldots,10\}$, both graphs are regular of degree $10+10s$.

In each trial, we set aside $20\%$ of the votes on every pair to fit reference estimates $\wh{\bm\theta}^{\MLE,\mathrm{ref}}$ and $\wh{\bm\theta}^{\RC,\mathrm{ref}}$.
We then resample the comparison design from the remaining $80\%$ and compute the MLE $\wh{\bm\theta}^{\MLE}$ and the Rank Centrality estimate $\wh{\bm\theta}^{\RC}$ from the resampled data.
We compute the maximum entrywise difference between the resampled and reference estimates:
\begin{equation}
  R_a
  =\norm{\wh{\bm\theta}^{a}-\wh{\bm\theta}^{a,\mathrm{ref}}}_\infty,
  \qquad a\in\{\MLE,\RC\}.
  \label{eq:arena-reference-distance}
\end{equation}

\begin{figure}[!htbp]
  \centering
  \includegraphics[width=0.98\linewidth]{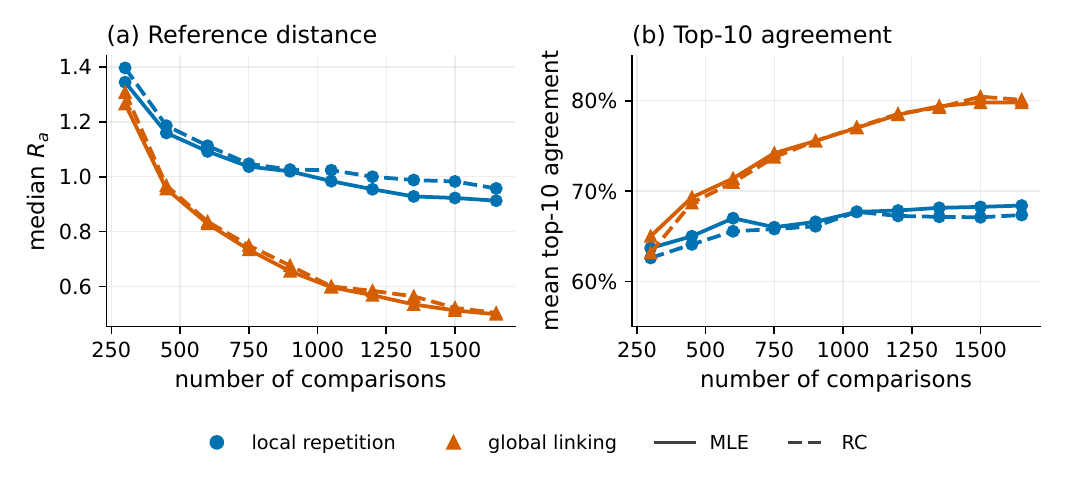}
  \caption{Chatbot Arena replay over $200$ trials, shown for budgets from $300$ to $1{,}650$ comparisons.
  \textbf{(a)} Median $R_a$ against the number of comparisons.
  \textbf{(b)} Mean top-$10$ agreement with the held-out reference against the number of comparisons.}
  \label{fig:arena-degree-matched}
\end{figure}

\Cref{fig:arena-degree-matched} shows that the MLE and Rank Centrality perform similarly.
Both perform better with global linking than with local repetition, despite the fact that both graphs are regular with the same degree.
This finding is consistent with our theory, which suggests that the geometry of the comparison graph matters more than the number of comparisons each item receives.
For instance, at the rightmost point in \Cref{fig:arena-degree-matched}, each graph consists of $1{,}650$ comparisons, but the median algebraic connectivity is $7.973$ for local repetition versus $95.006$ for global linking.

\subsection{Complete multipartite comparison graphs}
\label{sec:experiments-multipartite}
We next compare the performance of MLE on a group of designs to illustrate how the error scales with the algebraic connectivity.
Let $K_{n_1,\ldots,n_r}$ denote the complete (multipartite) graph with part sizes $n_1,\ldots,n_r$.
We consider three comparison graphs on the same $64$ items: $K_{32,32}$, $K_{16,16,16,16}$, and $K_{64}$.
We draw $L$ independent BTL comparisons on every edge of each graph and fit the MLE on the resulting outcomes. The corresponding algebraic connectivities are $\lambda_{\cD}=32L$, $48L$, and $64L$, respectively.

\begin{figure}[!htbp]
  \centering
  \includegraphics[width=0.96\linewidth]{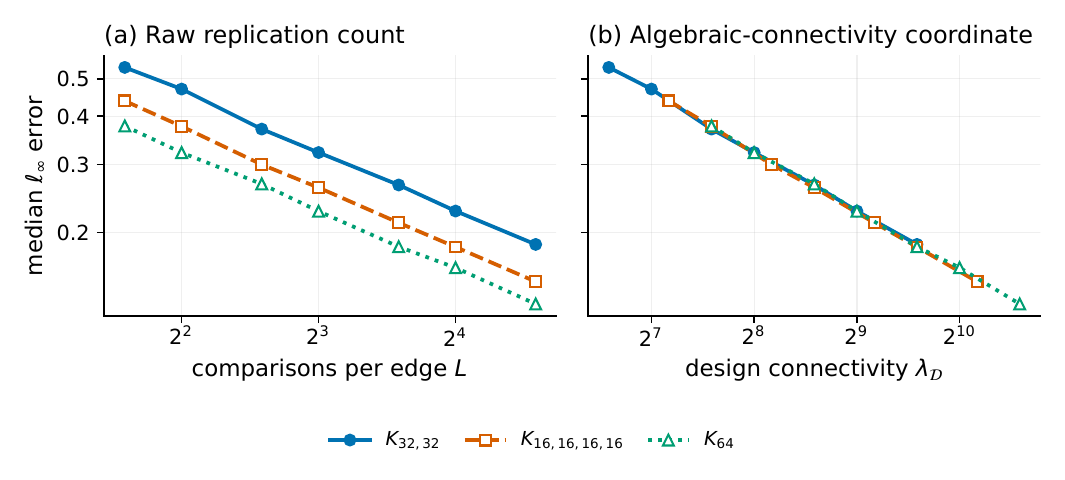}
  \caption{Median MLE entrywise error over $300$ paired trials on $K_{32,32}$, $K_{16,16,16,16}$, and $K_{64}$, plotted against \textbf{(a)} comparisons per edge and \textbf{(b)} the algebraic connectivity of the observation Laplacian. Both panels use log--log scales.}
  \label{fig:multipartite-experiment}
\end{figure}

\Cref{fig:multipartite-experiment} shows MLE has a different entrywise error for each of the three designs at the same number of comparisons per edge.
However, once we compare the error against $\lambda_\cD$, the three MLE curves collapse onto a common $1/\sqrt{\lambda_{\cD}}$ scaling.
This is consistent with the dependence on algebraic connectivity in \Cref{thm:general-entrywise}.

\subsection{Semirandom sampling}
\label{sec:experiments-semirandom}

Consider the semirandom model introduced in \Cref{sec:random-design-models}. 
We now compare the unweighted MLE and Rank Centrality under two designs: the initial Erd\H{o}s--R\'enyi graph $G_0\sim G(n,p_n)$ and a semirandom graph that augments $G_0$ by adding all missing edges among the bottom third of the latent ranking.
We also fit the SDP-weighted MLE of \citet{YangChenOrecchiaMa2024} on the augmented graph.

\begin{figure}[!htbp]
  \centering
  \includegraphics[width=0.98\linewidth]{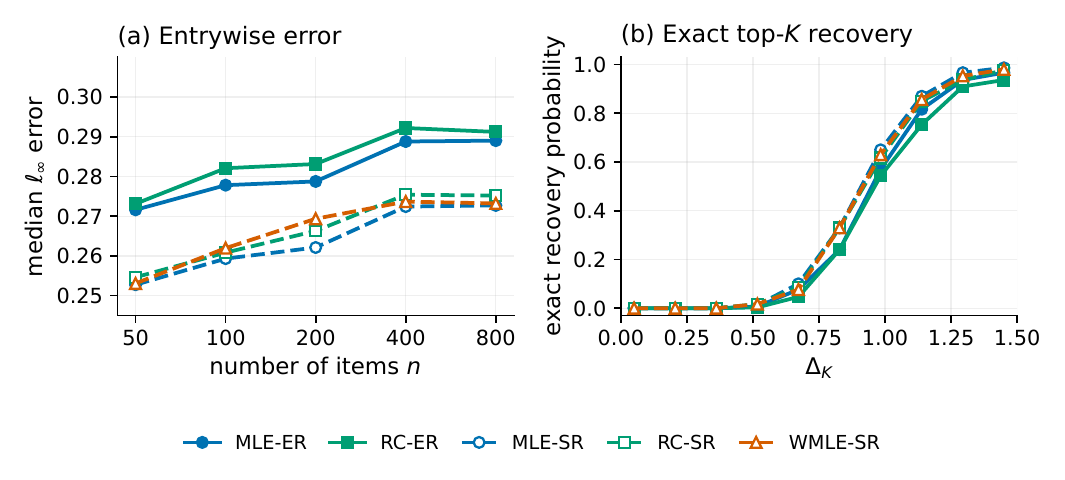}
  \caption{Effect of monotone augmentation for MLE, Rank Centrality (RC), and SDP-weighted MLE (WMLE) \citep{YangChenOrecchiaMa2024}, over $300$ paired trials per parameter set.
  ER refers to the Erd\H{o}s--R\'enyi base graph and SR refers to the augmented semirandom graph.
  \textbf{(a)} Median entrywise errors versus $n$, with $p_n=4\log(n)/n$ and $L=25$.
  \textbf{(b)} Top-$K$ recovery probability versus $\Delta_K$, with $n=100$, $p=0.2$, $K=10$, and $L=4$.}
  \label{fig:experiments}
\end{figure}

\Cref{fig:experiments} illustrates the robustness of both unweighted MLE and RC to semirandom augmentation.
It shows that augmentation reduces the median entrywise errors and increases the top-$K$ recovery probabilities for both MLE and RC.
On the augmented graph, both unweighted estimators perform similarly to the SDP-weighted MLE of \citet{YangChenOrecchiaMa2024}.
These observations are consistent with our theoretical results in \Cref{sec:random-design-models}, which apply to both unweighted MLE and RC without requiring reweighting.

%% file: sections/06_discussion.tex
\section{Discussion}
\label{sec:discussion}

This paper establishes graph-monotone entrywise guarantees for the MLE and Rank Centrality in the BTL model on general comparison graphs.
Both estimators achieve an entrywise error rate of $\widetilde O(1/\sqrt{\lambda_{\cD}})$ and are robust to outcome-adaptive augmentation.

A natural direction for future work is to build on the augmentation robustness of the MLE and Rank Centrality.
Our result is mostly concerned with robustness against potentially adversarial augmentation.
It would be interesting to leverage this robustness to design adaptive sample-augmentation strategies that improve estimation accuracy.

Our analysis is tailored to the maximum entrywise error over all pairs. 
Another direction for future work is to consider estimation error of the score difference $\theta_i^\star-\theta_j^\star$ for any given pair ${i,j}$ and see how this error depends on the design geometry and interacts with data augmentation. 
We leave both questions for future work.

%% file: sections/a_shared_tools.tex
\section{Shared probabilistic and graph tools}
\label{sec:shared-tools}

This appendix collects three ingredients shared by the MLE and Rank Centrality proofs.
In Sec.~A.1, we control the localized edge-noise process uniformly against sparse test vectors.
In Sec.~A.2, we use the algebraic connectivity of the design Laplacian to convert Laplacian energy into Euclidean control.
In Sec.~A.3, we use the same algebraic connectivity to lower bound the comparison mass across every cut and prove strong connectivity of the observed-win digraph.

\subsection{Uniform concentration over sparse vectors}

\begin{lemma}[Localized edge-noise process]
  \label{lem:localized-process}
  Fix a connected observation design $\cD=(G,\{L_e\}_{e\in E})$, where $G=([n],E)$.
  Let $\{\zeta_e:e\in E\}$ be independent, centered random variables satisfying
  \begin{equation}
    \E\exp(t\zeta_e)\le\exp(\tau^2L_et^2/2),
    \qquad t\in\R,
    \label{eq:edge-mgf}
  \end{equation}
  and put $\bm z=\sum_{e\in E}\zeta_e\bm b_e$.
  There is a universal constant $C>0$ such that, with probability at least $1-Cn^{-10}$, simultaneously for every nonempty $S\subsetneq[n]$ with $s=|S|\le n/2$ and every $\bm v\in\R^n$ supported on $S$,
  \begin{equation}
    \abs{\bm z^\top\bm v}
    \le C\tau\sqrt{s\log n}
    \sqrt{\bm v^\top\bm L_{\cD}\bm v}.
    \label{eq:localized-process}
  \end{equation}
\end{lemma}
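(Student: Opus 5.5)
The plan is to fix the support $S$, reduce the supremum over $\bm v$ supported on $S$ to the Euclidean norm of a whitened $s$-dimensional sub-Gaussian vector, control that norm by an $\varepsilon$-net, and finish with a union bound over all supports of size at most $n/2$.

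\textbf{Step 1 (the restricted Laplacian is invertible).} Write $\bm L_S$ for the principal $S\times S$ submatrix of $\bm L_{\cD}$, so that $\bm v^\top\bm L_{\cD}\bm v=\bm v_S^\top\bm L_S\bm v_S$ whenever $\supp(\bm v)\subseteq S$. Let $\bm\Pi$ be the projection onto $\cH$. Then
\[
\bm v^\top\bm L_{\cD}\bm v\ge\lambda_{\cD}\norm{\bm\Pi\bm v}_2^2=\lambda_{\cD}\Bigl(\norm{\bm v}_2^2-\tfrac{(\one_n^\top\bm v)^2}{n}\Bigr).
\]
By Cauchy--Schwarz, $(\one_n^\top\bm v)^2\le s\norm{\bm v}_2^2\le\tfrac n2\norm{\bm v}_2^2$. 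Hence $\bm v^\top\bm L_{\cD}\bm v\ge\tfrac{\lambda_{\cD}}2\norm{\bm v}_2^2$, so $\bm L_S\succ0$. This is the only place where $s\le n/2$ enters. By homogeneity, the claim for fixed $S$ is equivalent to
\[
\sup_{\bm v_S^\top\bm L_S\bm v_S=1}\abs{\bm z_S^\top\bm v_S}=\norm{\bm L_S^{-1/2}\bm z_S}_2\le C\tau\sqrt{s\log n}.
\]

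\textbf{Step 2 (directional sub-Gaussianity).} Fix a unit vector $\bm u\in\R^S$ and set $\bm v=\bm L_S^{-1/2}\bm u$, extended by zero outside $S$. Then $\bm u^\top\bm L_S^{-1/2}\bm z_S=\sum_e\zeta_e\,\bm b_e^\top\bm v$. By independence of the $\zeta_e$ and \eqref{eq:edge-mgf}, its moment generating function is at most
\[
\exp\Bigl(\tfrac{\tau^2t^2}{2}\sum_eL_e(\bm b_e^\top\bm v)^2\Bigr)=\exp\Bigl(\tfrac{\tau^2t^2}{2}\,\bm v^\top\bm L_{\cD}\bm v\Bigr)=\exp(\tau^2t^2/2).
\]
So every fixed direction gives a $\tau$-sub-Gaussian variable, and $\Pp\{\abs{\bm u^\top\bm L_S^{-1/2}\bm z_S}>r\}\le2\e^{-r^2/(2\tau^2)}$.

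\textbf{Step 3 (net and union bound).} Take a $1/2$-net $\mathcal N_S$ of the unit sphere in $\R^S$ with $\abs{\mathcal N_S}\le5^s$. The standard argument gives $\norm{\bm w}_2\le2\max_{\bm u\in\mathcal N_S}\abs{\bm u^\top\bm w}$. Union-bound over $\bm u\in\mathcal N_S$, over the at most $\binom ns\le n^s$ sets of size $s$, and over $s\le n/2$. The total failure probability is at most
\[
\sum_{s}2(5n)^s\,\e^{-r_s^2/(2\tau^2)}.
\]
Choosing $r_s=C\tau\sqrt{s\log n}$ with $C$ large makes each summand at most $n^{-11}$, because $s\log(5n)+10\log n\lesssim s\log n$ for $s\ge1$ and $n\ge3$. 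Hence on an event of probability at least $1-Cn^{-10}$, we get $\norm{\bm L_S^{-1/2}\bm z_S}_2\le2r_s$ for all admissible $S$. Together with Step 1, this yields \eqref{eq:localized-process} simultaneously for all $S$ and all $\bm v$ supported on $S$.

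\textbf{Main obstacle.} The real content is Step 1. Positive definiteness of $\bm L_S$, with a lower bound that comes from $\lambda_{\cD}$ and the support-size restriction, is exactly what allows Laplacian energy rather than the Euclidean norm to serve as the metric, and then everything is degree-free. The remaining steps are routine. The only care needed is to keep the union bound at $e^{O(s\log n)}$, so that the deviation level scales like $\sqrt{s\log n}$ and not like $\sqrt{n}$.
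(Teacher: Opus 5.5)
Your proof is correct and is essentially the paper's argument. The paper puts a $1/2$-net with at most $5^s$ points directly on the unit sphere of the norm $\norm{\cdot}_{\cD}$ restricted to $\cV_S$, which is exactly your Euclidean net after whitening by $\bm L_S^{-1/2}$, and it then uses the same per-direction MGF bound and the same union bound over $\binom ns 5^s$ events. One small correction to your commentary: nondegeneracy of $\bm L_S$ needs only connectedness and $S\subsetneq[n]$, since $\ker \bm L_{\cD}=\operatorname{span}\{\one_n\}$, so your Poincaré-based Step 1 is valid but not the real content, and the restriction $s\le n/2$ matters only later, for the sparse Poincaré inequality.
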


\begin{proof}
  For a fixed $\bm v$, independence and \eqref{eq:edge-mgf} give
  \[
    \E\exp(t\bm z^\top\bm v) \le \exp\!
    \left(\frac{\tau^2t^2}{2}
    \sum_{e\in E}
    L_e\cdot(\bm b_e^\top\bm v)^2\right) =\exp\!
    \left(\frac{\tau^2t^2}{2}
    \bm v^\top\bm L_{\cD}\bm v\right).
  \]
  Consequently, applying the Chernoff bound separately to $\bm z^\top\bm v$ and $-\bm z^\top\bm v$ gives
  \begin{equation}
    \Pp\!\left\{
    \abs{\bm z^\top\bm v}>
    u\tau\sqrt{\bm v^\top\bm L_{\cD}\bm v}
    \right\}
    \le2\exp(-u^2/2).
    \label{eq:fixed-vector-tail}
  \end{equation}

  Fix $S\subsetneq[n]$ and define
  \[
    \norm{\bm v}_{\cD} :=(\bm v^\top\bm L_{\cD}\bm v)^{1/2}, \qquad \cV_S:=\{\bm v:\supp(\bm v)\subseteq S\}.
  \]
  Connectedness implies that the only null vectors of $\bm L_{\cD}$ are multiples of $\one_n$.
  Hence $\norm{\cdot}_{\cD}$ is a norm on the $s$-dimensional space $\cV_S$.
  By a standard volumetric covering argument, its unit sphere admits a $1/2$-net $\mathcal N_S$ in this norm with $|\mathcal N_S|\le5^s$.

  Set
  \[
    M_S:=\sup_{\substack{\bm v\in\cV_S\\\norm{\bm v}_{\cD}=1}} \abs{\bm z^\top\bm v}.
  \]
  For every unit vector $\bm v$, choose $\bm q\in\mathcal N_S$ with $\norm{\bm v-\bm q}_{\cD}\le1/2$.
  The triangle inequality and the definition of $M_S$ then give
  \[
    \abs{\bm z^\top\bm v} \le \abs{\bm z^\top(\bm v-\bm q)} + \abs{\bm z^\top\bm q} \le \frac12M_S + \abs{\bm z^\top\bm q}.
  \]
  Taking the supremum over $\bm v$ and rearranging yields
  \[
    M_S = \sup_{\substack{\bm v\in\cV_S\\\norm{\bm v}_{\cD}=1}}
    \abs{\bm z^\top\bm v}
    \le2\max_{\bm q\in\mathcal N_S}\abs{\bm z^\top\bm q}.
  \]
  Consequently, for every $u>0$,
  \[
    \{M_S>2u\tau\}
    \subseteq
    \bigcup_{\bm q\in\mathcal N_S}
    \left\{\abs{\bm z^\top\bm q}>u\tau\right\}.
  \]
  Apply \eqref{eq:fixed-vector-tail} to the net $\mathcal N_S$ and take a union bound over the at most $\binom ns\le n^s$ supports of size $s$.
  With $u=A\sqrt{s\log n}$, the probability that $M_S>2u\tau$ for at least one support of size at most $n/2$ is bounded by
  \[
    2\sum_{s=1}^{\lfloor n/2\rfloor} \binom ns\cdot 5^s\cdot\exp\left(-\frac{1}{2}A^2s\log n\right) \le 2\sum_{s=1}^{\lfloor n/2\rfloor} \exp\!
    \left\{s\log(5n)-\frac{1}{2}
    A^2s\log n\right\} \le Cn^{-10}
  \]
  for a sufficiently large universal constant $A$.
  On the complementary event, $M_S\le2A\tau\sqrt{s\log n}$.
  Hence, for every nonzero $\bm v\in\cV_S$,
  \[
    \abs{\bm z^\top\bm v} =\norm{\bm v}_{\cD} \abs{\bm z^\top\frac{\bm v}{\norm{\bm v}_{\cD}}} \le2A\tau\sqrt{s\log n}\,\norm{\bm v}_{\cD},
  \]
  which proves \eqref{eq:localized-process} after absorbing $2A$ into $C$.
\end{proof}

\subsection{From Laplacian energy to Euclidean control}

\begin{lemma}[Sparse-support Poincar\'e inequality]
  \label{lem:sparse-poincare}
  For any fixed connected observation design $\cD$, every $\bm v\in\R^n$ supported on a set of size at most $n/2$ satisfies
  \begin{equation}
    \bm v^\top\bm L_{\cD}\bm v
    \ge\frac{\lambda_{\cD}}{2}\norm{\bm v}_2^2.
    \label{eq:sparse-poincare}
  \end{equation}
\end{lemma}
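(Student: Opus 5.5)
We write $\bm v=\bm v_\perp+\bar v\,\one_n$, where $\bar v:=\one_n^\top\bm v/n$ and $\bm v_\perp:=\bm v-\bar v\one_n\in\cH$.

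The plan has three steps.

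\textbf{Step 1: apply the Poincar\'e inequality on $\cH$.} Since $\bm L_{\cD}\one_n=\bm 0$, the quadratic form is unchanged by the projection:
\[
  \bm v^\top\bm L_{\cD}\bm v=\bm v_\perp^\top\bm L_{\cD}\bm v_\perp .
\]
Because $\lambda_{\cD}=\lambda_2(\bm L_{\cD})$ is the smallest eigenvalue of $\bm L_{\cD}$ restricted to $\cH$, the Courant--Fischer characterization gives
\[
  \bm v^\top\bm L_{\cD}\bm v\ge\lambda_{\cD}\norm{\bm v_\perp}_2^2 .
\]
It therefore suffices to show that $\norm{\bm v_\perp}_2^2\ge\frac12\norm{\bm v}_2^2$ whenever $s:=|\supp(\bm v)|\le n/2$.

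\textbf{Step 2: bound the mean using the sparse support.} By orthogonality,
\[
  \norm{\bm v_\perp}_2^2=\norm{\bm v}_2^2-n\bar v^2 .
\]
Cauchy--Schwarz applied only over the support gives $(\one_n^\top\bm v)^2\le s\norm{\bm v}_2^2$. Hence
\[
  n\bar v^2=\frac{(\one_n^\top\bm v)^2}{n}\le\frac{s}{n}\norm{\bm v}_2^2\le\frac12\norm{\bm v}_2^2 .
\]

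\textbf{Step 3: combine.} Substituting Step 2 into the identity from Step 2 gives $\norm{\bm v_\perp}_2^2\ge\frac12\norm{\bm v}_2^2$. Together with Step 1 this yields \eqref{eq:sparse-poincare}.

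The only point needing care is that $\lambda_{\cD}$ controls the form exactly on $\cH$, which holds because connectedness makes $\one_n$ span the null space of $\bm L_{\cD}$. I do not anticipate any real obstacle; the argument is elementary.
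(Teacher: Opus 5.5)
Your proposal is correct and follows the paper's proof essentially step for step. Both project $\bm v$ onto $\one_n^\perp$, apply the variational characterization of $\lambda_{\cD}$, and then use Cauchy--Schwarz on the support to get $n\overline v^2\le\frac{|S|}{n}\norm{\bm v}_2^2\le\frac12\norm{\bm v}_2^2$.
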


\begin{proof}
  Let $\overline v=n^{-1}\one_n^\top\bm v$.
  Since $\bm v-\overline v\one_n\perp\one_n$, the standard variational characterization of $\lambda_{\cD}=\lambda_2(\bm L_{\cD})$ gives
  \begin{equation}
    \bm v^\top\bm L_{\cD}\bm v
    =(\bm v-\overline v\one_n)^\top\bm L_{\cD}
    (\bm v-\overline v\one_n)
    \ge\lambda_{\cD}\norm{\bm v-\overline v\one_n}_2^2.
    \label{eq:sparse-poincare-variational}
  \end{equation}
  If $S=\supp(\bm v)$ and $|S|\le n/2$, then Cauchy--Schwarz gives
  \[
    n\overline v^2 =\frac{(\sum_{i\in S}v_i)^2}{n} \le\frac{|S|}{n}\norm{\bm v}_2^2 \le\frac12\norm{\bm v}_2^2.
  \]
  Consequently, $\norm{\bm v-\overline v\one_n}_2^2=\norm{\bm v}_2^2-n\overline v^2 \ge\norm{\bm v}_2^2/2$.
  Combining this with \eqref{eq:sparse-poincare-variational} finishes the proof.
\end{proof}

\subsection{Strong connectivity of the observed win graph}

For $S\subset[n]$, let $\partial S:=\{\{i,j\}\in E:\abs{\{i,j\}\cap S}=1\}$ denote its edge boundary.

\begin{lemma}[Cut weight from algebraic connectivity]
  \label{lem:spectral-cut-size}
  For any fixed connected observation design $\cD$ and every nonempty $S\subsetneq[n]$,
  \begin{equation}
    \sum_{e\in\partial S}
    L_e \ge \lambda_{\cD}\frac{|S|(n-|S|)}{n}.
    \label{eq:spectral-cut-size}
  \end{equation}
\end{lemma}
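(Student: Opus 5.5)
The plan is to test the Laplacian quadratic form against the centered indicator of $S$ and then invoke the variational characterization of $\lambda_{\cD}=\lambda_2(\bm L_{\cD})$.

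Let $\bm x=\one_S$ be the indicator vector of $S$, and put $s=|S|$.

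First, compute the quadratic form. For each edge $e=(i,j)$ we have $(\bm b_e^\top\bm x)^2=(x_i-x_j)^2$. This equals $1$ exactly when $e\in\partial S$ and $0$ otherwise. Hence
\[
  \bm x^\top\bm L_{\cD}\bm x=\sum_{e\in\partial S}L_e .
\]

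Second, center the vector. Let $\bar x=s/n$ and $\bm y=\bm x-\bar x\one_n$. Since $\bm L_{\cD}\one_n=\bm 0$, the quadratic form is unchanged: $\bm y^\top\bm L_{\cD}\bm y=\bm x^\top\bm L_{\cD}\bm x$. Moreover $\bm y\perp\one_n$, and $\one_n$ spans the null space of $\bm L_{\cD}$ because $G$ is connected. The Courant--Fischer characterization therefore gives
\[
  \bm y^\top\bm L_{\cD}\bm y\ge\lambda_{\cD}\norm{\bm y}_2^2 .
\]
This is the same step used in \eqref{eq:sparse-poincare-variational}.

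Third, compute the Euclidean norm of the centered vector:
\[
  \norm{\bm y}_2^2=\norm{\bm x}_2^2-n\bar x^2=s-\frac{s^2}{n}=\frac{s(n-s)}{n}.
\]
Combining the three displays yields \eqref{eq:spectral-cut-size}.

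There is no real obstacle. The only point needing care is that the centering leaves the quadratic form unchanged, which holds because $\bm L_{\cD}\one_n=\bm 0$. Note that we must not apply \Cref{lem:sparse-poincare} directly to $\bm x$: that lemma would require $|S|\le n/2$ and would lose a factor of $2$. Working with the exact centered norm avoids both issues.
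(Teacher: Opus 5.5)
Your proposal is correct and matches the paper's proof: both test the Rayleigh quotient of $\bm L_{\cD}$ against the centered indicator $\one_S-(|S|/n)\one_n$. Both then read off its energy $\sum_{e\in\partial S}L_e$ and its squared norm $|S|(n-|S|)/n$, and conclude from the variational characterization of $\lambda_{\cD}$.
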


\begin{proof}
  Let $\bm x=\one_S-(|S|/n)\one_n$.
  Then $\bm x\perp\one_n$, $\bm x^\top\bm L_{\cD}\bm x=\sum_{e\in\partial S}L_e$, and $\norm{\bm x}_2^2=|S|(n-|S|)/n$.
  The variational definition of $\lambda_{\cD}$ proves the claim.
\end{proof}

Recall that a directed graph is strongly connected if, for every ordered pair of vertices $(i,j)$, it contains a directed path from $i$ to $j$.
We form the directed graph $D_{\mathrm{win}}$ by connecting $i\to j$ whenever item $j$ defeats item $i$ at least once on the observed pair.

\begin{lemma}[Strong connectivity for a fixed observation design]
  \label{lem:general-directed-connectivity}
  For a fixed connected observation design $\cD$, if
  \begin{equation}
    \frac{\lambda_{\cD}}{1+\kappa}\ge C\log n,
    \label{eq:connectivity-baseline}
  \end{equation}
  then $D_{\mathrm{win}}$ is strongly connected with probability at least $1-Cn^{-10}$.
\end{lemma}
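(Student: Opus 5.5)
A directed graph is strongly connected exactly when every nonempty proper subset $S\subsetneq[n]$ has at least one outgoing arc, that is, some $i\in S$ and $j\notin S$ with $i\to j$ in $D_{\mathrm{win}}$. So we will show that, with high probability, for every such $S$ some item outside $S$ beats some item inside $S$ at least once. The same argument applied to $S^c$ handles the reverse direction, so it suffices to fix the orientation ``an outside item defeats an inside item'' and union bound over all $S$.

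\emph{Single cut.} Fix $S$ with $s=|S|$, and by symmetry consider both $S$ and $S^c$ with $\min(s,n-s)=k\le n/2$. The event that $S$ has no outgoing arc means that every comparison on every edge of $\partial S$ is won by the endpoint in $S$. Any single comparison between $i$ and $j$ is won by $j$ with probability $\alpha_j^\star/(\alpha_i^\star+\alpha_j^\star)\ge 1/(1+\kappa)$. By independence across edges and repetitions,
\[
  \Pp\{S\text{ has no outgoing arc}\}\le\Bigl(1-\tfrac{1}{1+\kappa}\Bigr)^{\sum_{e\in\partial S}L_e}\le\exp\!\Bigl(-\frac{1}{1+\kappa}\sum_{e\in\partial S}L_e\Bigr).
\]
By \Cref{lem:spectral-cut-size}, $\sum_{e\in\partial S}L_e\ge\lambda_{\cD}\,s(n-s)/n\ge\lambda_{\cD}k/2$. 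Under \eqref{eq:connectivity-baseline} this makes the probability at most $\exp(-Ck\log n/2)$.

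\emph{Union bound.} For each $k$ there are at most $2\binom nk\le 2n^k$ sets $S$ with $\min(s,n-s)=k$. Hence the total failure probability is at most
\[
  \sum_{k\ge1}2n^k\,n^{-Ck/2}\le\sum_{k\ge1}2n^{-(C/2-1)k}\le C'n^{-10}
\]
once $C$ is large, for instance $C\ge 24$.

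\emph{Main obstacle.} The key step is getting the cut mass to scale with $\min(s,n-s)$, so that it beats the $\binom nk$ entropy term. This is exactly what the factor $s(n-s)/n$ in \Cref{lem:spectral-cut-size} supplies. The rest is routine; the only point needing care is that the per-comparison escape probability is bounded below by $1/(1+\kappa)$ uniformly over all pairs.
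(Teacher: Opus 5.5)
Your proposal is correct and follows essentially the same route as the paper's proof. Both arguments reduce strong connectivity to every cut having an outgoing arc and bound each cut by $\exp\bigl(-\sum_{e\in\partial S}L_e/(1+\kappa)\bigr)$. They then use \Cref{lem:spectral-cut-size} with $s(n-s)/n\ge\min(s,n-s)/2$, and finish with a union bound carrying the factor $2\binom nk$.
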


\begin{proof}
  Fix a nonempty $S\subsetneq[n]$.
  For every comparison across its edge boundary, the probability that the endpoint outside $S$ defeats the endpoint in $S$ is at least $1/(1+\kappa)$.
  For $D_\mathrm{win}$, such an outcome creates a directed edge from $S$ to $S^c$.
  Independence across edges and repetitions gives
  \[
    \Pp\{\text{no directed edge of $D_{\mathrm{win}}$ leaves }S\} \le \exp\!
    \left\{-\frac{\sum_{e\in\partial S}
      L_e}{1+\kappa}\right\}.
  \]
  A directed graph on $[n]$ is strongly connected if and only if every nonempty $S\subsetneq[n]$ has a directed edge from $S$ to $S^c$.
  Applying \Cref{lem:spectral-cut-size} to every such $S$ and then using $n-s\ge n/2$ for $s\le n/2$ yields
  \begin{align*}
    \Pp\{D_{\mathrm{win}}\text{ is not strongly connected}\}
     & \le
    2\sum_{s=1}^{\lfloor n/2\rfloor}
    \binom ns
    \exp\!\left\{-\frac{\lambda_{\cD}s(n-s)}{n(1+\kappa)}\right\} \\
     & \le
    2\sum_{s=1}^{\lfloor n/2\rfloor}
    \exp\!\left\{s\log n-\frac{\lambda_{\cD}s}{2(1+\kappa)}\right\}
    \le Cn^{-10}.
  \end{align*}
  The last line follows from \eqref{eq:connectivity-baseline}.
\end{proof}

%% file: sections/b_mle_proof.tex
\section{Proof of Theorem~\ref{thm:general-entrywise}: MLE}
\label{sec:mle-proof}

This appendix proves the MLE part of \Cref{thm:general-entrywise}.
We first establish existence and localized score control, then derive a truncation-energy bound from the logistic increment.
Finally, we use an additive level-set contraction argument to obtain entrywise control.

\subsection{Existence, score identity, and localization}

Recall that $D_{\mathrm{win}}$ is the directed graph with $i\to j$ whenever item $j$ defeats item $i$ at least once.

We say that a loss function is \emph{coercive} if its value tends to infinity whenever the Euclidean norm of its argument tends to infinity.

\begin{lemma}[Strong connectivity implies a finite unique MLE]
  \label{lem:mle-existence}
  If $D_{\mathrm{win}}$ is strongly connected, the loss \eqref{eq:mle-loss} is coercive and strictly convex on $\cH$.
  Hence the MLE exists uniquely.
\end{lemma}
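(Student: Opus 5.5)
Strict convexity and coercivity together give existence and uniqueness of a minimizer on the closed subspace $\cH$. I will prove the two properties separately.

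\emph{Strict convexity.} Regroup the loss \eqref{eq:mle-loss} by individual comparisons. For an oriented edge $e=(i,j)$, each comparison won by $i$ contributes $\log(1+\e^{-\bm b_e^\top\bm\theta})$, and each comparison won by $j$ contributes $\log(1+\e^{\bm b_e^\top\bm\theta})$. This rewrites $\cL$ as a nonnegative sum of convex terms. The Hessian is
\[
\nabla^2\cL(\bm\theta)=\sum_{e}L_e\,\sig'(\bm b_e^\top\bm\theta)\,\bm b_e\bm b_e^\top .
\]
Each weight $\sig'(\bm b_e^\top\bm\theta)$ is strictly positive, so the Hessian is a weighted Laplacian of $G$ with positive weights. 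Strong connectivity of $D_{\mathrm{win}}$ forces $G$ to be connected; this also follows from connectedness of the design, which is assumed. Hence the kernel of the Hessian is $\mathrm{span}(\one_n)$, and the Hessian is positive definite on $\cH$. Therefore $\cL$ is strictly convex on $\cH$.

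\emph{Coercivity.} Since $\cL$ is convex, it suffices to show $\cL(t\bm u)\to\infty$ as $t\to\infty$ for every unit $\bm u\in\cH$. A convex function that grows unboundedly along every ray has compact sublevel sets: the recession-cone argument reduces coercivity to the absence of nonzero directions of recession.

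Fix a unit $\bm u\in\cH$. It is nonconstant, so $u$ attains at least two distinct values. Let $S=\{i:u_i=\max_k u_k\}$, which is a nonempty proper subset. Strong connectivity gives a directed edge $i\to j$ of $D_{\mathrm{win}}$ with $i\in S$ and $j\notin S$. By definition this means $j$ defeated $i$ at least once. That comparison contributes $\log(1+\e^{t(u_i-u_j)})$ to $\cL(t\bm u)$, and this term is at least $t(u_i-u_j)\to\infty$ because $u_i>u_j$. All other terms are nonnegative, so $\cL(t\bm u)\to\infty$.

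The only slightly delicate point is the passage from divergence along rays to genuine coercivity. I would handle it directly. The map $\bm u\mapsto$ (the directional growth rate) is positive on the compact unit sphere of $\cH$. Concretely, define $g(\bm u)=\sum_{\text{comparisons}}(\text{loser score}-\text{winner score})_+$, which is positive on that sphere by the argument above. Each term $\log(1+\e^{x})$ is at least $(x)_+$, so $\cL(\bm\theta)\ge g(\bm\theta)=\norm{\bm\theta}_2\,g(\bm\theta/\norm{\bm\theta}_2)$. Since $g$ is continuous and positive on the compact sphere, it has a positive minimum $c$ there. Hence $\cL(\bm\theta)\ge c\norm{\bm\theta}_2$ on $\cH$, which proves coercivity.

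Coercivity gives a minimizer on $\cH$, and strict convexity makes it unique, which establishes the lemma.
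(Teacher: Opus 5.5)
Your proof is correct and follows the paper's route. Strict convexity comes from the Hessian being a positively weighted Laplacian of the connected graph $G$, and coercivity comes from strong connectivity of $D_{\mathrm{win}}$ supplying an upset comparison across a cut, whose loss term $\log(1+\e^{x})\ge x$ grows linearly. The only difference is how that growth is made uniform: the paper cuts at the largest consecutive gap of the sorted coordinates to get the explicit bound $\cL(\bm\theta)\ge(\max_i\theta_i-\min_i\theta_i)/(n-1)$, whereas you cut at the top level set and use positive homogeneity of $g$ with compactness of the unit sphere of $\cH$, which gives $\cL(\bm\theta)\ge c\norm{\bm\theta}_2$ with a non-explicit $c>0$.
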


\begin{proof}
  Let $N_{ij}$ be the number of observed wins of item $i$ over item $j$.
  Equivalently, grouping the terms in \eqref{eq:mle-loss} by their observed winner gives
  \begin{equation}
    \cL(\bm\theta)
    =\sum_{i\ne j}
    N_{ij} \log\!
    \left(1+\e^{\theta_j-\theta_i}\right).
    \label{eq:ordered-win-loss}
  \end{equation}
  Its Hessian is
  \[
    \nabla^2\cL(\bm\theta) =\sum_{e\in E}L_e\cdot\sig'(\bm b_e^\top\bm\theta) \bm b_e\bm b_e^\top.
  \]
  Every coefficient is positive.
  Since the comparison graph $G$ is connected, the Hessian's nullspace is $\operatorname{span}\{\one_n\}$ and the loss is strictly convex on $\cH$.

  For coercivity, take any sequence $\bm\theta^{(m)}\in\cH$ with $\norm{\bm\theta^{(m)}}_2\to\infty$.
  Its coordinate range $R_m=\max_i\theta_i^{(m)}-\min_i\theta_i^{(m)}$ tends to infinity.
  Among the ordered coordinates, some consecutive gap is at least $R_m/(n-1)$.
  Let $S_m$ and $[n]\setminus S_m$ be the sets of items separated by this gap, with $\theta_i^{(m)}<\theta_j^{(m)}$ for all $i\in S_m$ and $j\in[n]\setminus S_m$.
  Strong connectivity supplies a directed edge $j\to i$ of $D_{\mathrm{win}}$ with $i\in S_m$ and $j\notin S_m$.
  Thus item $i$ defeats item $j$ at least once.
  Its contribution to \eqref{eq:ordered-win-loss} is at least
  \[
    \log\!
    \left(1+\e^{\theta_j^{(m)}-\theta_i^{(m)}}\right)
    \ge\frac{R_m}{n-1}.
  \]
  Thus the loss is at least as large as this term and is coercive because $R_m\to\infty$.
  Continuity, coercivity, and strict convexity of $\cL$ on $\cH$ yield a unique minimizer.
\end{proof}

\begin{lemma}[MLE score identity and localized score control]
  \label{lem:mle-score-localization}
  Define the score at the truth by
  \begin{equation}
    \bm g_{\MLE}:=\nabla\cL(\bm\theta^\star)
    =\sum_{e\in E}
    L_e\cdot \bigl(\sig(\bm b_e^\top\bm\theta^\star)-\overline Y_e\bigr)\bm b_e.
    \label{eq:mle-score}
  \end{equation}
  Whenever the MLE exists, set
  \[
    \bm\delta:=\wh{\bm\theta}^{\MLE}-\bm\theta^\star.
  \]
  Then
  \begin{equation}
    \bm 0=\bm g_{\MLE}+\sum_{e\in E}
    L_e\cdot\left\{\sig\bigl(\bm b_e^\top(\bm\theta^\star+\bm\delta)\bigr) -\sig(\bm b_e^\top\bm\theta^\star)\right\}\bm b_e.
    \label{eq:mle-score-identity}
  \end{equation}
  Condition on the observation design $\cD$.
  Then, with probability at least $1-Cn^{-10}$,
  \begin{equation}
    \abs{\bm g_{\MLE}^\top\bm v}
    \le C\sqrt{s\log n}
    \sqrt{\bm v^\top\bm L_{\cD}\bm v}
    \label{eq:mle-localized-score}
  \end{equation}
  simultaneously for every $\bm v$ supported on a set of size $s\le n/2$.
\end{lemma}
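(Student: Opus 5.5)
The lemma has two parts: a deterministic first-order identity, and a probabilistic bound on the score. I will treat them separately and reduce the second to \Cref{lem:localized-process}.

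\emph{Score identity.} The MLE minimizes the convex, differentiable loss $\cL$ over the subspace $\cH$. The first-order condition therefore says that $\nabla\cL(\wh{\bm\theta}^{\MLE})$ is orthogonal to $\cH$, i.e., it is a multiple of $\one_n$. Differentiating \eqref{eq:mle-loss} gives
\[
  \nabla\cL(\bm\theta)=\sum_{e\in E}L_e\bigl(\sig(\bm b_e^\top\bm\theta)-\overline Y_e\bigr)\bm b_e .
\]
Each $\bm b_e$ satisfies $\one_n^\top\bm b_e=0$, so the gradient always lies in $\cH$. Being both in $\cH$ and a multiple of $\one_n$, it must vanish: $\nabla\cL(\wh{\bm\theta}^{\MLE})=\bm 0$. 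Alternatively, $\cL$ is invariant under shifts along $\one_n$, so the minimizer over $\cH$ is a global minimizer over $\R^n$. Writing $\wh{\bm\theta}^{\MLE}=\bm\theta^\star+\bm\delta$ and adding and subtracting $\sig(\bm b_e^\top\bm\theta^\star)$ inside each summand splits the gradient into $\bm g_{\MLE}$ plus the sigmoid-increment sum. This is exactly \eqref{eq:mle-score-identity}.

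\emph{Localized control.} I apply \Cref{lem:localized-process} with
\[
  \zeta_e:=L_e\bigl(\sig(\bm b_e^\top\bm\theta^\star)-\overline Y_e\bigr)
  =\sum_{\ell=1}^{L_e}\bigl(\sig(\bm b_e^\top\bm\theta^\star)-Y_e^{(\ell)}\bigr),
\]
so that $\bm g_{\MLE}=\sum_e\zeta_e\bm b_e$. Conditional on $\cD$, the $\zeta_e$ are independent across edges and centered, since $\E Y_e^{(\ell)}=\sig(\bm b_e^\top\bm\theta^\star)$. Each $\zeta_e$ is a sum of $L_e$ independent centered variables, each taking values in an interval of length $1$. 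By Hoeffding's lemma each summand has MGF at most $\exp(t^2/8)$, so
\[
  \E\exp(t\zeta_e)\le\exp(L_et^2/8).
\]
This is \eqref{eq:edge-mgf} with $\tau=1/2$.

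\Cref{lem:localized-process} then gives, with probability at least $1-Cn^{-10}$, simultaneously for all $\bm v$ supported on a set of size $s\le n/2$,
\[
  \abs{\bm g_{\MLE}^\top\bm v}\le (C/2)\sqrt{s\log n}\sqrt{\bm v^\top\bm L_{\cD}\bm v}.
\]
Absorbing the constant gives \eqref{eq:mle-localized-score}. The lemma is stated for nonempty $S\subsetneq[n]$; the case $\bm v=\bm 0$ is trivial, and $s\le n/2<n$ guarantees $S\neq[n]$.

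There is no real obstacle. The only point needing care is the first-order condition on $\cH$, where the MLE is a constrained minimizer. The shift invariance of $\cL$, or equivalently the fact that the gradient lies in $\cH$, is what forces the full gradient to vanish.
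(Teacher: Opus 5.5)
Your proposal is correct and follows the paper's proof essentially step by step. The gradient lies in $\cH$ and is also a multiple of $\one_n$ at the constrained minimizer, which forces $\nabla\cL(\wh{\bm\theta}^{\MLE})=\bm 0$; then Hoeffding's lemma gives $\E\exp(t\zeta_e)\le\exp(L_et^2/8)$, and \Cref{lem:localized-process} finishes the argument. The differences are cosmetic: you make $\tau=1/2$ explicit, and you justify $\nabla\cL\perp\one_n$ via $\one_n^\top\bm b_e=0$ rather than via shift invariance.
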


\begin{proof}
  At a constrained minimizer on $\cH$, the gradient is orthogonal to $\cH$ and therefore belongs to $\operatorname{span}\{\one_n\}$.
  On the other hand, $\nabla\cL(\bm\theta)\perp\one_n$ for every $\bm\theta$, because the loss is invariant under common shifts.
  The constrained first-order condition thus gives $\nabla\cL(\wh{\bm\theta}^{\MLE})=0$.
  Subtracting $\nabla\cL(\bm\theta^\star)=\bm g_{\MLE}$ and expanding the gradient difference gives \eqref{eq:mle-score-identity}.

  Condition on the observation design $\cD$.
  The variables $\zeta_e=L_e\cdot\{\sig(\bm b_e^\top\bm\theta^\star)-\overline Y_e\}$ are independent and centered.
  Hoeffding's lemma gives $\E\exp(t\zeta_e)\le\exp(L_et^2/8)$.
  Applying \Cref{lem:localized-process} with a universal $\tau$ proves \eqref{eq:mle-localized-score}.
\end{proof}

\subsection{Truncated logistic increment}

For $t\in\R$ and $a\in(0,1]$, define
\begin{equation}
  T_{t,a}(z):=\min\{(z-t)_+,a\}.
  \label{eq:mle-truncation}
\end{equation}

\begin{lemma}[Logistic increment lower bound]
  \label{lem:logistic-increment}
  For every $\kappa\ge1$ and $x,u\in\R$ with $|x|\le\log\kappa$,
  \begin{equation}
    \abs{\sig(x+u)-\sig(x)}
    \ge\frac{1}{2(\e+1)\kappa}\min\{|u|,1\}
    \ge\frac{1}{8\kappa}\min\{|u|,1\}.
    \label{eq:logistic-increment}
  \end{equation}
\end{lemma}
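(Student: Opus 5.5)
The plan is to reduce to a monotonicity statement plus an elementary estimate of $\sig'$. First, $\sig$ is increasing, so $\abs{\sig(x+u)-\sig(x)}$ is nondecreasing in $|u|$ on each side of $0$. If $|u|\ge1$, it therefore suffices to bound $\abs{\sig(x\pm1)-\sig(x)}$ from below by $1/(2(\e+1)\kappa)$. If $|u|\le1$, we use the mean value theorem: $\abs{\sig(x+u)-\sig(x)}=\sig'(\xi)|u|$ for some $\xi$ between $x$ and $x+u$, so $|\xi|\le\log\kappa+1$. Since the case $|u|=1$ is covered by the second bound, both cases reduce to the single claim
\[
  \sig'(\xi)\ge\frac{1}{2(\e+1)\kappa}\qquad\text{whenever }|\xi|\le\log\kappa+1 .
\]

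To prove this claim, write $\sig'(\xi)=\e^{\xi}/(1+\e^{\xi})^2$. This function is even in $\xi$ and decreasing in $|\xi|$, so it is enough to evaluate it at $|\xi|=\log\kappa+1$, i.e.\ at $\e^{|\xi|}=\e\kappa$. There
\[
  \sig'(\xi)=\frac{\e\kappa}{(1+\e\kappa)^2}\ge\frac{\e\kappa}{(\e\kappa+\kappa)^2}=\frac{\e}{(\e+1)^2\kappa},
\]
where the inequality uses $\kappa\ge1$. It remains to check that $\e/(\e+1)^2\ge 1/(2(\e+1))$. This is equivalent to $2\e\ge \e+1$, which holds. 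The final inequality in \eqref{eq:logistic-increment} then follows from $2(\e+1)\approx7.44\le8$.

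The main obstacle is the bookkeeping in the first step: the monotone reduction for $|u|\ge1$ must keep the intermediate point within distance $\log\kappa+1$ of the origin. This works because after replacing $u$ by $\pm1$, the mean value point again lies between $x$ and $x\pm1$, so the same bound on $\sig'$ applies. Everything else is a routine one-variable calculus estimate.
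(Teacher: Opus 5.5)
Your proof is correct and takes essentially the same route as the paper. Both arguments use a uniform lower bound $\sigma'(z)\ge 1/(2(e+1)\kappa)$ on $|z|\le\log\kappa+1$, the mean value theorem for $|u|\le1$, and monotonicity of $\sigma$ to reduce $|u|>1$ to $u=\pm1$; your endpoint evaluation of $\sigma'$ only spells out the calculus step the paper states in one line.
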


\begin{proof}
  For $|z|\le\log\kappa+1$,
  \[
    \sig'(z)=\frac1{\e^{-z}+2+\e^z}\ge\frac1{2(\e+1)\kappa}.
  \]
  If $|u|\le1$, the mean value theorem therefore gives
  \[
    |\sig(x+u)-\sig(x)|\ge \frac{1}{2(\e+1)\kappa}|u|.
  \]
  If $u>1$, monotonicity of $\sig$ gives
  \[
    |\sig(x+u)-\sig(x)|\ge |\sig(x+1)-\sig(x)| \ge \frac{1}{2(\e+1)\kappa}.
  \]
  If $u<-1$, the analogous comparison in the opposite direction gives
  \[
    |\sig(x+u)-\sig(x)|\ge |\sig(x-1)-\sig(x)| \ge \frac{1}{2(\e+1)\kappa}.
  \]
  This proves the first inequality in \eqref{eq:logistic-increment}, and the second follows from $\e<3$.
\end{proof}

\subsection{Truncated level-set energy}

\begin{lemma}[MLE truncated level-set energy]
  \label{lem:mle-truncated-energy}
  Assume that \eqref{eq:mle-localized-score} in \Cref{lem:mle-score-localization} holds.
  Fix $t\in\R$ and $a\in(0,1]$, and define $\bm v$ by
  \[
    v_i:=T_{t,a}(\delta_i), \qquad i\in[n].
  \]
  Let $S:=\{i:\delta_i>t\}$.
  If $1\le |S|\le n/2$, then
  \begin{align}
    \bm v^\top\bm L_{\cD}\bm v
     & \le C\kappa^2|S|\log n,
    \label{eq:mle-energy}                            \\
    \norm{\bm v}_2^2
     & \le C\frac{\kappa^2}{\lambda_{\cD}}|S|\log n.
    \label{eq:mle-l2-truncation}
  \end{align}
  For the lower truncation, define
  \[
    \wt T_{t,a}(z):=-\min\{(t-z)_+,a\}, \qquad v_i:=\wt T_{t,a}(\delta_i),
  \]
  and let $S^-:=\{i:\delta_i<t\}$.
  If $1\le |S^-|\le n/2$, the same bounds hold with $|S|$ replaced by $|S^-|$.
\end{lemma}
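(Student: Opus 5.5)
Test the score identity \eqref{eq:mle-score-identity} against $\bm v$ to get
\[
  -\bm g_{\MLE}^\top\bm v
  =\sum_{e\in E}L_e\,\bigl\{\sig(\bm b_e^\top\bm\theta^\star+\bm b_e^\top\bm\delta)-\sig(\bm b_e^\top\bm\theta^\star)\bigr\}\,\bm b_e^\top\bm v .
\]
The left side is bounded above by \eqref{eq:mle-localized-score}. Note that $\supp(\bm v)\subseteq S$ and $|S|\le n/2$, so $s\le|S|$. This gives the bound $C\sqrt{|S|\log n}\,\sqrt{\bm v^\top\bm L_{\cD}\bm v}$. For the right side we want a lower bound of the form $\frac{1}{8\kappa}\bm v^\top\bm L_{\cD}\bm v$. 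Combining the two and dividing by $\sqrt{\bm v^\top\bm L_{\cD}\bm v}$ (the case where it vanishes is trivial) gives $\sqrt{\bm v^\top\bm L_{\cD}\bm v}\le C\kappa\sqrt{|S|\log n}$, which is \eqref{eq:mle-energy}. Then \eqref{eq:mle-l2-truncation} follows from \Cref{lem:sparse-poincare}, since $\bm v$ is supported on $S$ with $|S|\le n/2$.

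The key step is a per-edge inequality. For $e=(i,j)$, write $x=\bm b_e^\top\bm\theta^\star$, so $|x|\le\log\kappa$. Write $u=\delta_i-\delta_j$ and $w=v_i-v_j=T_{t,a}(\delta_i)-T_{t,a}(\delta_j)$. We claim that
\[
  \{\sig(x+u)-\sig(x)\}\,w\ \ge\ \frac{1}{8\kappa}\,w^2 .
\]
The argument goes as follows. $T_{t,a}$ is nondecreasing and $1$-Lipschitz, so $w$ has the same sign as $u$ (or $w=0$), and $|w|\le\min\{|u|,a\}\le\min\{|u|,1\}$ because $a\le1$. Also, $\sig(x+u)-\sig(x)$ has the sign of $u$. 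So the product is nonnegative, and by \Cref{lem:logistic-increment} it is at least $\frac{1}{8\kappa}\min\{|u|,1\}\,|w|\ge\frac{1}{8\kappa}w^2$. Multiplying by $L_e$ and summing over $e\in E$ gives the lower bound $\frac{1}{8\kappa}\sum_e L_e(\bm b_e^\top\bm v)^2=\frac{1}{8\kappa}\bm v^\top\bm L_{\cD}\bm v$.

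This is where the truncation at level $a\le1$ matters. It is exactly what lets $\min\{|u|,1\}$ dominate $|w|$, which converts the saturating logistic increment into a quadratic Laplacian energy. I expect this to be the only real obstacle; the rest is bookkeeping.

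The lower truncation is handled symmetrically. $\wt T_{t,a}$ is also nondecreasing and $1$-Lipschitz with range of width $a$, and it is supported on $S^-$. The same per-edge argument therefore applies verbatim, with $|S^-|$ in place of $|S|$.
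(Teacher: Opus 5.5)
Your proposal is correct and follows the paper's proof essentially step for step. It uses the same per-edge inequality, derived from sign agreement and $|\bm b_e^\top\bm v|\le\min\{|u|,1\}$ via the logistic increment lemma, and then combines the summed bound with the localized score control. It then invokes the sparse-support Poincar\'e inequality for the $\ell_2$ bound, and the lower truncation is handled symmetrically.
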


\begin{proof}
  For an oriented edge $e=(i,j)$, put $u=\bm b_e^\top\bm\delta = \delta_i-\delta_j$.
  Because $T_{t,a}$ is nondecreasing, $1$-Lipschitz, and takes values in $[0,a]\subset[0,1]$, $\bm b_e^\top\bm v=T_{t,a}(\delta_i)-T_{t,a}(\delta_j)$ has the same sign as $u$ and $|\bm b_e^\top\bm v|\le\min\{|u|,1\}$.
  Since $|\bm b_e^\top\bm\theta^\star|\le\log\kappa$, \Cref{lem:logistic-increment} gives
  \[
    \left[\sig\bigl(\bm b_e^\top(\bm\theta^\star+\bm\delta)\bigr) -\sig(\bm b_e^\top\bm\theta^\star)\right] \cdot(\bm b_e^\top\bm v) \ge \frac{1}{8\kappa}\min\{|u|,1\} \cdot \abs{\bm b_e^\top\bm v} \ge\frac{1}{8\kappa}(\bm b_e^\top\bm v)^2.
  \]
  Taking the inner product of the score identity \eqref{eq:mle-score-identity} in \Cref{lem:mle-score-localization} with $\bm v$ and summing the preceding inequality over the edges yields
  \[
    -\bm g_{\MLE}^\top\bm v \ge\frac{1}{8\kappa} \sum_{e\in E}L_e\cdot(\bm b_e^\top\bm v)^2 =\frac{1}{8\kappa}\bm v^\top\bm L_{\cD}\bm v.
  \]
  Combining this with the localized-score bound in \Cref{lem:mle-score-localization} gives
  \[
    \frac{1}{8\kappa}\bm v^\top\bm L_{\cD}\bm v
    \le C\sqrt{|S|\log n}
    \sqrt{\bm v^\top\bm L_{\cD}\bm v}.
  \]
  Rearranging this proves \eqref{eq:mle-energy}.
  Combining it with \Cref{lem:sparse-poincare} proves \eqref{eq:mle-l2-truncation}.
  The lower truncation works similarly.
\end{proof}

\subsection{Completion of the MLE proof}

Condition on the observation design $\cD$.
Since $1+\kappa\le2\kappa^2$, the condition \eqref{eq:general-baseline} implies the connectivity condition \eqref{eq:connectivity-baseline}.
By \Cref{lem:general-directed-connectivity,lem:mle-existence}, the MLE exists uniquely.
\Cref{lem:mle-score-localization} gives \eqref{eq:mle-localized-score}.
A union bound shows that both conclusions hold with probability at least $1-Cn^{-10}$.

Let $m$ be a median of the coordinates $\delta_1,\ldots,\delta_n$ of $\bm\delta$ and choose it so that
\[
  |\{i:\delta_i>m\}|\le n/2, \qquad |\{i:\delta_i<m\}|\le n/2.
\]
\runinhead{Upper tail}
For $t\ge0$, let $s_+(t)=|\{i:\delta_i>m+t\}|$.
For $s_+(t)>0$, apply \eqref{eq:mle-l2-truncation} in \Cref{lem:mle-truncated-energy} to $v_i=\min\{(\delta_i-m-t)_+,a\}$, where $0<a\le1$ is to be determined.
Since $v_i=a$ on $\{\delta_i>m+t+a\}$,
\begin{equation}
  a^2s_+(t+a) \le \|\bm{v}\|_2^2
  \le C\frac{\kappa^2}{\lambda_{\cD}}
  s_+(t)\log n.
  \label{eq:mle-level-recurrence}
\end{equation}

Set
\[
  a=A\kappa\sqrt{\frac{\log n}{\lambda_{\cD}}}, \qquad t_0=0.
\]
Here $A$ is a sufficiently large universal constant.
While $s_+(t_r)>0$, define $t_{r+1}=t_r+a$.
The condition \eqref{eq:general-baseline}, with a sufficiently large constant, ensures $a\le1$.
Substituting the definition of $a$ into \eqref{eq:mle-level-recurrence} and choosing $A$ large enough gives
\[
  s_+(t_{r+1})\le\frac14s_+(t_r).
\]
Define the actual stopping time
\[
  R_+:=\inf\{r\ge0:s_+(t_r)=0\}.
\]
Then $R_+\le\lceil\log_4 n\rceil+1$ and
\begin{equation}
  \max_i\delta_i-m
  \le t_{R_+}
  \le C\kappa\frac{\log^{3/2} n}{\sqrt{\lambda_{\cD}}}.
  \label{eq:mle-upper-tail}
\end{equation}

\runinhead{Lower tail}
Apply the lower-truncation version of \Cref{lem:mle-truncated-energy} to $s_-(t)=|\{i:\delta_i<m-t\}|$.
The lower-truncation analogue of \eqref{eq:mle-level-recurrence}, with the same step size $a$ from the upper iteration and its own stopping time $R_-$, gives
\begin{equation}
  m-\min_i\delta_i
  \le C\kappa\frac{\log^{3/2} n}{\sqrt{\lambda_{\cD}}}.
  \label{eq:mle-lower-tail}
\end{equation}
\runinhead{Conclusion}
Let $B$ be the larger of the right-hand sides in \eqref{eq:mle-upper-tail}--\eqref{eq:mle-lower-tail}.
Every coordinate of $\bm\delta$ lies in $[m-B,m+B]$.
Since $\one_n^\top\bm\delta=0$, this interval contains zero and $|m|\le B$.
Thus $\norm{\bm\delta}_\infty\le2B$.
This proves the MLE bound in \eqref{eq:general-entrywise} and completes the MLE part of \Cref{thm:general-entrywise}.

%% file: sections/c_rank_centrality_proof.tex
\section{Proof of Theorem~\ref{thm:general-entrywise}: Rank Centrality}
\label{sec:rc-proof}

This appendix proves the Rank Centrality part of \Cref{thm:general-entrywise}.
We first establish irreducibility and a stationary-flow identity, then obtain truncation-energy bounds over the level sets.
Finally, we use a multiplicative level-set contraction argument to obtain entrywise control.

We start by introducing the necessary multiplicative notation.
Fix a connected observation design $\cD$ and write $\lambda_{\cD}=\lambda_2(\bm L_{\cD})$.
Because BTL probabilities are invariant to common rescaling of the positive weights, define
\begin{equation}
  \bar\alpha_i^\star
  :=\frac{\alpha_i^\star}{\min_j\alpha_j^\star}
  =\exp\!\left(\theta_i^\star-\min_j\theta_j^\star\right),
  \qquad 1\le \bar\alpha_i^\star\le\kappa.
  \label{eq:rc-rescaled-weights}
\end{equation}
Define the corresponding population probability that item $j$ defeats item $i$ by
\begin{equation}
  q_{ij}^\star
  = \frac{\alpha_j^\star}
  {\alpha_i^\star+\alpha_j^\star}
  =\frac{\bar\alpha_j^\star}
  {\bar\alpha_i^\star+\bar\alpha_j^\star}.
  \label{eq:rc-population-probability}
\end{equation}

\subsection{Irreducibility and stationary-flow formulation}

Recall that $D_{\mathrm{win}}$ is the directed win graph of the empirical comparisons, with $i\to j$ whenever item $j$ defeats item $i$ at least once.
By \eqref{eq:rank-centrality-matrix}, it is also the transition digraph of the empirical Rank Centrality chain.
Consequently, whenever $D_{\mathrm{win}}$ is strongly connected, the chain is irreducible and \eqref{eq:rc-estimator} defines a unique Rank Centrality estimator.

Recall that in Section~\ref{sec:model} we chose an arbitrary orientation for each edge in the comparison graph $G$.
For each chosen orientation $e=(i,j)$, let $\bar e=(j,i)$ denote its reverse orientation and set
\[
  \bm b_{\bar e}:= \bm e_j-\bm e_i = -\bm b_e.
\]
Define the directed empirical edge flows at the true weights and their imbalance by
\begin{equation}
  f_e:=L_e\cdot\bar\alpha_i^\star\cdot\wh q_{ij},
  \qquad
  f_{\bar e}:=L_e\cdot\bar\alpha_j^\star\cdot\wh q_{ji},
  \qquad
  \gamma_e:=f_e-f_{\bar e}.
  \label{eq:rc-edge-flows}
\end{equation}
We similarly denote $\gamma_{\bar e}:=f_{\bar e}-f_e=-\gamma_e$.
Since $\wh q_{ji}=1-\wh q_{ij}$ and $1\le\bar\alpha_i^\star\le\kappa$, their total mass satisfies
\begin{equation}
  L_e\le f_e+f_{\bar e} \le \kappa L_e \label{eq:rc-edge-mass}
\end{equation}
for every edge $e\in E$.
Moreover, \eqref{eq:rc-population-probability} and $\E[\wh q_{ij}\mid\cD]=q_{ij}^\star$ give the centered imbalance identity
\begin{equation}
  \gamma_e
  =L_e\cdot(\bar\alpha_i^\star+\bar\alpha_j^\star)
  \cdot(\wh q_{ij}-q_{ij}^\star),
  \qquad
  \E[\gamma_e\mid\cD]=0.
  \label{eq:rc-centered-edge-noise}
\end{equation}

The variables $f_e$ and $f_{\bar e}$ isolate the flows in the two directions with deterministic bounds on their sum, while $\gamma_e$ captures their centered random imbalance.
The following lemma establishes concentration for the noise variables $\gamma_e$.
\begin{lemma}[Rank Centrality edge-noise localization]
  \label{lem:rc-noise-localization}
  Define
  \begin{equation}
    \bm g_{\RC}:=\sum_{e\in E}\gamma_e\cdot\bm b_e.
    \label{eq:rc-score}
  \end{equation}
  Conditional on the observation design $\cD$, with probability at least $1-Cn^{-10}$,
  \begin{equation}
    \abs{\bm g_{\RC}^\top\bm v}
    \le C\kappa\sqrt{s\log n}
    \sqrt{\bm v^\top\bm L_{\cD}\bm v}
    \label{eq:rc-localized-score}
  \end{equation}
  simultaneously for all $\bm v$ supported on a set of size $s\le n/2$.
\end{lemma}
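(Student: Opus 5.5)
The plan is to reduce the statement directly to \Cref{lem:localized-process}, exactly as was done for the MLE score in \Cref{lem:mle-score-localization}, with $\zeta_e:=\gamma_e$ and $\tau$ of order $\kappa$. By \eqref{eq:rc-score}, $\bm g_{\RC}=\sum_{e\in E}\gamma_e\bm b_e$ has the required form $\bm z=\sum_e\zeta_e\bm b_e$. What remains is to check the independence, centering, and sub-Gaussian moment generating function hypothesis \eqref{eq:edge-mgf} for the variables $\gamma_e$, conditional on $\cD$.

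\textbf{Independence and centering.} Conditional on $\cD$, each $\gamma_e$ is a deterministic function of the outcomes $Y_e^{(1)},\ldots,Y_e^{(L_e)}$ on edge $e$ alone, through $\wh q_{ij}=1-\overline Y_e$. Outcomes are independent across edges, so the $\gamma_e$ are independent. Centering is already recorded in \eqref{eq:rc-centered-edge-noise}.

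\textbf{The MGF bound.} By \eqref{eq:rc-centered-edge-noise},
\[
  \gamma_e=(\bar\alpha_i^\star+\bar\alpha_j^\star)\sum_{\ell=1}^{L_e}\bigl(q_{ij}^\star-Y_e^{(\ell)}\bigr),
\]
using $L_e(\wh q_{ij}-q_{ij}^\star)=\sum_\ell\{(1-Y_e^{(\ell)})-q_{ij}^\star\}$. Each summand is centered, independent, and lies in an interval of length $1$. The prefactor is deterministic with $\bar\alpha_i^\star+\bar\alpha_j^\star\le2\kappa$ by \eqref{eq:rc-rescaled-weights}. Hoeffding's lemma applied to each of the $L_e$ terms then gives
\[
  \E\exp(t\gamma_e)\le\exp\bigl(L_e(2\kappa)^2t^2/8\bigr)=\exp\bigl(\kappa^2L_et^2/2\bigr),
\]
so \eqref{eq:edge-mgf} holds with $\tau=\kappa$.

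\textbf{Conclusion.} Invoking \Cref{lem:localized-process} with $\tau=\kappa$ gives, with probability at least $1-Cn^{-10}$, simultaneously for every $\bm v$ supported on a set of size $s\le n/2$,
\[
  \abs{\bm g_{\RC}^\top\bm v}\le C\kappa\sqrt{s\log n}\sqrt{\bm v^\top\bm L_{\cD}\bm v},
\]
which is \eqref{eq:rc-localized-score}. No step is a real obstacle. The only points needing care are that the weights $\bar\alpha^\star$ are the rescaled ones, bounded by $\kappa$ rather than by an unnormalized quantity, and that the bound must hold for every real $t$, not only $t\ge0$. Hoeffding's lemma gives the two-sided MGF bound, so both hold.
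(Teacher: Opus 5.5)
Your proposal is correct and follows the paper's proof. Both arguments use independence across edges, centering from \eqref{eq:rc-centered-edge-noise}, and Hoeffding's lemma with $(\bar\alpha_i^\star+\bar\alpha_j^\star)^2\le 4\kappa^2$ to get $\E\exp(t\gamma_e)\le\exp(\kappa^2L_et^2/2)$, then invoke \Cref{lem:localized-process} with $\tau=\kappa$. One algebra slip needs fixing: your displayed summand should be $(1-q_{ij}^\star)-Y_e^{(\ell)}$, which is what your own identity $L_e(\wh q_{ij}-q_{ij}^\star)=\sum_\ell\{(1-Y_e^{(\ell)})-q_{ij}^\star\}$ gives, because $q_{ij}^\star-Y_e^{(\ell)}$ has mean $2q_{ij}^\star-1$ and so is not centered, and Hoeffding's lemma must be applied to the corrected, centered summands.
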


\begin{proof}
  The edgewise noise variables $\gamma_e$ are independent.
  For every edge $e=(i,j)$ and repetition $\ell\in[L_e]$, write
  \[
    X_e^{(\ell)}=\ind\{j\text{ defeats }i\text{ in repetition }\ell\}, \qquad \wh q_{ij}=L_e^{-1}\sum_{\ell=1}^{L_e}X_e^{(\ell)}.
  \]
  Hoeffding's lemma gives, for every $t\in\R$,
  \begin{equation}
    \E\!\left[\exp(t\gamma_e)\mid\cD\right]
    \le \exp\!\left(
    \frac{t^2L_e\cdot(\bar\alpha_i^\star+\bar\alpha_j^\star)^2}{8}
    \right)
    \le \exp\!\left(\frac{t^2\kappa^2L_e}{2}\right).
    \label{eq:rc-noise-mgf}
  \end{equation}
  Applying \Cref{lem:localized-process} with $\tau=\kappa$ proves \eqref{eq:rc-localized-score}.
\end{proof}

Now suppose the empirical chain is irreducible, so $\wh\pi_i>0$ for every $i$.
Define the relative stationary masses by
\begin{equation}
  r_i:=\xi\cdot\frac{\wh\pi_i}{\bar\alpha_i^\star}>0,
  \label{eq:rc-relative-stationary}
\end{equation}
where the normalization factor $\xi>0$ is arbitrary.
By \eqref{eq:rank-centrality-matrix}, the $i$th coordinate of the stationary equation $\wh{\bm\pi}^\top\wh{\bm P}=\wh{\bm\pi}^\top$ is equivalent to
\[
  \sum_{j:\{i,j\}\in E}L_{ij} \left(\wh\pi_i\cdot\wh q_{ij}-\wh\pi_j\cdot\wh q_{ji}\right)=0.
\]
Multiplying this by $\xi$ and substituting \eqref{eq:rc-relative-stationary} gives the stationary-flow identity
\begin{equation}
  \sum_{j:\{i,j\}\in E}
  L_{ij} \left(\bar\alpha_i^\star\cdot r_i\cdot\wh q_{ij} -\bar\alpha_j^\star\cdot r_j\cdot\wh q_{ji}\right)=0, \qquad i\in[n].
  \label{eq:rc-stationary-flow}
\end{equation}
The ratio $r_i$ measures empirical stationary mass relative to its population target $\bar\alpha_i^\star$, up to a normalization factor.
We will choose the normalization factor $\xi$ only to rescale $\bm r$ so that its median is equal to one.

\subsection{Upper cut integration}

The goal of this subsection is to control the upper level sets of the relative stationary masses $\bm r$.
Fix $t>0$ and $a>0$, and define the upper-truncation vector $\bm z\in\R^n$ coordinatewise by
\begin{equation}
  z_i=T_{t,a}^+(r_i):=\min\{(r_i-t)_+,a\}.
  \label{eq:rc-upper-truncation}
\end{equation}
The vector $\bm z$ records the excess above $t$, truncated at height $a$, and equals $a$ on $\{i:r_i>t+a\}$.
Thus an upper bound on its Laplacian energy, combined with the sparse Poincar\'e inequality in \Cref{lem:sparse-poincare}, will control the size of the next upper level set.
We obtain this energy bound by integrating the stationary-flow identity over the intermediate level sets $S_h^+=\{i:r_i>h\}$ for $h\in[t,t+a]$.

\begin{lemma}[Upper stationary-flow truncation energy]
  \label{lem:rc-upper-truncation-energy}
  Assume that \eqref{eq:rc-localized-score} in \Cref{lem:rc-noise-localization} holds.
  Suppose $1\le s=|\{i:r_i>t\}|\le n/2$.
  Then the truncation in \eqref{eq:rc-upper-truncation} satisfies
  \begin{equation}
    \bm z^\top\bm L_{\cD}\bm z
    \le C\kappa^2\cdot(t+a)^2s\log n.
    \label{eq:rc-upper-truncation-energy}
  \end{equation}
\end{lemma}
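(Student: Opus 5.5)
The plan is to integrate the stationary-flow identity \eqref{eq:rc-stationary-flow} over the cut indicators $\one_{S_h^+}$, $h\in[t,t+a]$, rather than testing it directly against $\bm z$. A direct test produces a noise term weighted by the non-constant factor $(r_i+r_j)/2$, which is why we avoid it.

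\textbf{Step 1: a cut identity for each $h$.} Fix $h$ and set $S=S_h^+$. Summing \eqref{eq:rc-stationary-flow} over $i\in S$ cancels the edges inside $S$. For every boundary edge, orient it as $(i,j)$ with $i\in S$ and $j\notin S$, so that $r_i>h\ge r_j$. Then
\[
  \sum_{e\in\partial S}\bigl(r_i f_e-r_j f_{\bar e}\bigr)=0 .
\]
Now split each term as
\[
  r_if_e-r_jf_{\bar e}=h\,\gamma_e+(r_i-h)f_e+(h-r_j)f_{\bar e},
\]
and recall $\gamma_e=f_e-f_{\bar e}$. Going back to the fixed orientations, $\sum_{e\in\partial S}h\gamma_e$ equals $h\,\bm g_{\RC}^\top\one_S$ up to a global sign convention. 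This gives
\[
  \sum_{e\in\partial S_h^+}\Bigl[(r_i-h)f_e+(h-r_j)f_{\bar e}\Bigr]=\mp\,h\,\bm g_{\RC}^\top\one_{S_h^+}.
\]
Both bracketed summands are nonnegative. Since $f_e+f_{\bar e}\ge L_e$ by \eqref{eq:rc-edge-mass}, each bracket is at least $L_e\min\{r_i-h,\,h-r_j\}$.

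\textbf{Step 2: integrate over $h\in[t,t+a]$.} On the right-hand side, $\int_t^{t+a}h\one_{S_h^+}\,dh=\bm w$, where
\[
  w_i=\varphi(z_i),\qquad \varphi(x):=\int_t^{t+x}h\,dh .
\]
This vector is supported on $\{r_i>t\}$, which has size $s\le n/2$. Since $0\le\varphi'\le t+a$ on $[0,a]$, we get $|w_i-w_j|\le(t+a)|z_i-z_j|$, and hence $\bm w^\top\bm L_{\cD}\bm w\le(t+a)^2\,\bm z^\top\bm L_{\cD}\bm z$. Then \Cref{lem:rc-noise-localization} yields
\[
  |\bm g_{\RC}^\top\bm w|\le C\kappa(t+a)\sqrt{s\log n}\sqrt{\bm z^\top\bm L_{\cD}\bm z}.
\]

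On the left-hand side, fix an edge with $r_i>r_j$. It lies in $\partial S_h^+$ exactly for $h\in I:=[r_j,r_i)\cap[t,t+a]$, and $|I|=|z_i-z_j|$. Its contribution is therefore at least
\[
  L_e\int_I\min\{r_i-h,\,h-r_j\}\,dh .
\]
The integrand is a tent function with slope $1$ on $[r_j,r_i]$, and $I$ is a subinterval of length $\ell=|z_i-z_j|$. A subinterval at one end of the tent gives $\ell^2/2$, and a subinterval straddling the peak still gives at least $\ell^2/4$. So the integral is at least $\ell^2/4$, and summing over edges bounds the left-hand side below by $\tfrac14\bm z^\top\bm L_{\cD}\bm z$.

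\textbf{Step 3: conclude.} Combining the two sides gives
\[
  \tfrac14\bm z^\top\bm L_{\cD}\bm z\le C\kappa(t+a)\sqrt{s\log n}\sqrt{\bm z^\top\bm L_{\cD}\bm z}.
\]
Rearranging proves \eqref{eq:rc-upper-truncation-energy}.

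The main obstacle I expect is Step 1 together with the lower bound in Step 2. The decomposition must keep the random imbalance multiplied by the \emph{constant} $h$, so that it integrates to the single vector $\bm w$ whose Laplacian energy is controlled by that of $\bm z$. It must also leave nonnegative remainder terms that integrate to a multiple of $(z_i-z_j)^2$ using only the bound $f_e+f_{\bar e}\ge L_e$. The tent-integral estimate is what makes that second requirement work, and I would check carefully that orientations and signs match the definition of $\bm g_{\RC}$.
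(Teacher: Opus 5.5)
Your proposal is correct and follows the paper's proof essentially step for step. It uses the same cut identity obtained by splitting $r_if_e-r_jf_{\bar e}$ into $h\gamma_e$ plus two nonnegative remainders, the same cut-integrated test vector $w_i=tz_i+\tfrac12 z_i^2$ with Lipschitz factor $t+a$, and the same application of \Cref{lem:rc-noise-localization}. The only difference is in lower bounding the remainder: you use the pointwise minimum and a tent integral, which gives $\tfrac14\bm z^\top\bm L_{\cD}\bm z$, whereas the paper bounds $\int_I(x-h)\,dh$ and $\int_I(h-y)\,dh$ separately by $\ell^2/2$ before using $f_e+f_{\bar e}\ge L_e$, which gives $\tfrac12\bm z^\top\bm L_{\cD}\bm z$; this affects only the constant.
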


\begin{proof}
Recall that $\partial S$ denotes the edge boundary of $S$.
Let $\one_S\in\{0,1\}^n$ denote the indicator vector of $S$, and define
\begin{equation}
  \Gamma(S):=\one_S^\top\bm g_{\RC}.
  \label{eq:rc-cut-noise}
\end{equation}
Recall that $\{r_i\}$ is defined in \eqref{eq:rc-relative-stationary}.
For $h>0$, let $S_h^+=\{i:r_i>h\}$.
Orient every edge in $\partial S_h^+$ as $e=(i,j)$, where $i\in S_h^+$ and $j\notin S_h^+$, using the reverse direction of the fixed orientation when necessary.
Reversal changes the signs of both $\gamma_e$ and $\bm b_e$, so $\gamma_e\cdot\bm b_e$ is unchanged.
We may therefore use these outward orientations in \eqref{eq:rc-score}.
Since $\one_{S_h^+}^\top\bm b_e=1$ on every outward boundary edge and is zero on every nonboundary edge,
\[
  \Gamma(S_h^+)=\sum_{e\in\partial S_h^+}\gamma_e.
\]

Define
\[
  P^+(h):=\sum_{e\in\partial S_h^+} \left\{f_e\cdot(r_i-h)+f_{\bar e}\cdot(h-r_j)\right\}.
\]
Every summand is nonnegative because $r_i>h\ge r_j$ and the flows are nonnegative.
On the other hand, summing the stationary-flow identity \eqref{eq:rc-stationary-flow} over $S_h^+$ cancels every internal edge and leaves
\[
  0=\sum_{e\in\partial S_h^+} (f_e\cdot r_i-f_{\bar e}\cdot r_j) =P^+(h)+h\cdot\Gamma(S_h^+),
\]
where the second equality uses
\[
  f_e\cdot r_i-f_{\bar e}\cdot r_j =f_e\cdot(r_i-h)+f_{\bar e}\cdot(h-r_j) +h\cdot(f_e-f_{\bar e}).
\]
Consequently,
\begin{equation}
  P^+(h)=-h\cdot\Gamma(S_h^+)\ge0.
  \label{eq:rc-upper-cut}
\end{equation}

We next lower bound the integral of $P^+(h)$ by the Laplacian energy of $\bm z$.
Fix an edge $\{i,j\}\in E$.
If $r_i=r_j$, the edge never belongs to $\partial S_h^+$ and contributes zero to $\int_t^{t+a}P^+(h)\,dh$.
Otherwise, relabel its endpoints so that $x:=r_i>r_j=:y$.
The edge belongs to $\partial S_h^+$ exactly when $y\le h<x$.
Hence the relevant levels form the interval $I=[t,t+a]\cap[y,x)$, whose length is
\[
  \ell=T_{t,a}^+(x)-T_{t,a}^+(y)=|z_i-z_j|.
\]
Over these levels the edge is oriented from $i$ to $j$, so its contribution to $\int_t^{t+a}P^+(h)\,dh$ is
\[
  f_e\int_I(x-h)\,dh+f_{\bar e}\int_I(h-y)\,dh.
\]
If $\ell=0$, both integrals vanish.
Otherwise, write $I=[\alpha,\alpha+\ell]$ up to the choice of endpoints.
Since $y\le \alpha<\alpha+\ell\le x$,
\[
  \int_I(x-h)\,dh
  =\ell(x-\alpha)-\frac{\ell^2}{2}
  \ge\frac{\ell^2}{2},
  \qquad
  \int_I(h-y)\,dh
  =\ell(\alpha-y)+\frac{\ell^2}{2}
  \ge\frac{\ell^2}{2}.
\]
The edge-mass bound \eqref{eq:rc-edge-mass} therefore makes this contribution at least
\[
  \frac12(f_e+f_{\bar e})\ell^2
  \ge\frac12L_e\cdot(z_i-z_j)^2.
\]
Summing over the edges gives
\begin{equation}
  \int_t^{t+a}
  P^+(h)\,dh \ge\frac12\sum_{\{i,j\}\in E}L_{ij}\cdot(z_i-z_j)^2 =\frac12\bm z^\top\bm L_{\cD}\bm z.
  \label{eq:rc-upper-energy-lower}
\end{equation}

Define the scalar cut-integration transform
\begin{equation}
  \psi_{t,a}^+(x):=\int_t^{t+a}h\cdot\ind\{x>h\}\,dh
  =t\cdot T_{t,a}^+(x)+\frac12T_{t,a}^+(x)^2
  \label{eq:rc-upper-cut-transform}
\end{equation}
and write $\bm\psi_{t,a}^+(\bm r):=(\psi_{t,a}^+(r_i))_{i\in[n]}$ for its coordinatewise application to $\bm r$.
For each level $h$, the indicator of $S_h^+$ has coordinates $\ind\{r_i>h\}$.
Hence \eqref{eq:rc-cut-noise} and an interchange of the finite sum with the integral give
\begin{equation}
  \begin{aligned}
    \int_t^{t+a}h\cdot\Gamma(S_h^+)\,dh
    &=\sum_{i=1}^n g_{\RC,i}
      \int_t^{t+a}h\cdot\ind\{r_i>h\}\,dh \\
    &=\sum_{i=1}^n g_{\RC,i}\cdot\psi_{t,a}^+(r_i)
     =\bm g_{\RC}^\top\bm\psi_{t,a}^+(\bm r).
  \end{aligned}
  \label{eq:rc-upper-cut-integration}
\end{equation}
The vector $\bm\psi_{t,a}^+(\bm r)$ is supported on $\{r>t\}$.
Moreover, since $T_{t,a}^+(x),T_{t,a}^+(y)\in[0,a]$, the identity \eqref{eq:rc-upper-cut-transform} gives
\begin{equation}
  \begin{aligned}
    \abs{\psi_{t,a}^+(x)-\psi_{t,a}^+(y)}
    &=\abs{T_{t,a}^+(x)-T_{t,a}^+(y)}
      \left\{t+\frac{T_{t,a}^+(x)+T_{t,a}^+(y)}{2}\right\} \\
    &\le(t+a)\cdot\abs{T_{t,a}^+(x)-T_{t,a}^+(y)}.
  \end{aligned}
  \label{eq:rc-upper-lipschitz}
\end{equation}

  Combine \eqref{eq:rc-upper-cut}, \eqref{eq:rc-upper-energy-lower}, and \eqref{eq:rc-upper-cut-integration} to obtain
  \[
    \frac12\bm z^\top\bm L_{\cD}\bm z \le-\bm g_{\RC}^\top\bm\psi_{t,a}^+(\bm r).
  \]
  Since $\bm\psi_{t,a}^+(\bm r)$ is supported on $\{i:r_i>t\}$, whose size is $s$, we may apply \eqref{eq:rc-localized-score} from \Cref{lem:rc-noise-localization} with $\bm v=\bm\psi_{t,a}^+(\bm r)$ to obtain
  \[
    -\bm g_{\RC}^\top\bm\psi_{t,a}^+(\bm r)
    \le \abs{\bm g_{\RC}^\top\bm\psi_{t,a}^+(\bm r)}
    \le C\kappa\sqrt{s\log n}\cdot
    \sqrt{\bm\psi_{t,a}^+(\bm r)^\top\bm L_{\cD}
      \bm\psi_{t,a}^+(\bm r)}.
  \]
  Moreover, applying \eqref{eq:rc-upper-lipschitz} to every edge and using $z_i=T_{t,a}^+(r_i)$ gives
  \[
    \begin{aligned}
      \bm\psi_{t,a}^+(\bm r)^\top\bm L_{\cD}
        \bm\psi_{t,a}^+(\bm r)
      &=\sum_{\{i,j\}\in E}L_{ij}\cdot
        \abs{\psi_{t,a}^+(r_i)-\psi_{t,a}^+(r_j)}^2 \\
      &\le (t+a)^2\sum_{\{i,j\}\in E}L_{ij}\cdot(z_i-z_j)^2
       =(t+a)^2\bm z^\top\bm L_{\cD}\bm z.
    \end{aligned}
  \]
  Combining these two bounds with the preceding display yields
  \[
    \frac12\bm z^\top\bm L_{\cD}\bm z
    \le C\kappa\sqrt{s\log n}
    (t+a)\cdot\sqrt{\bm z^\top\bm L_{\cD}\bm z}.
  \]
  Rearranging this gives \eqref{eq:rc-upper-truncation-energy} and completes the proof.
\end{proof}

\subsection{Lower cut truncation}

For $t>0$ and $0<a<t$, define the lower-truncation vector $\bm z\in\R^n$ coordinatewise by
\begin{equation}
  z_i=T_{t,a}^-(r_i):=\min\{(t-r_i)_+,a\}.
  \label{eq:rc-lower-truncation}
\end{equation}
The corresponding energy bound follows from the upper-cut argument after a sign reversal.

\begin{lemma}[Lower stationary-flow truncation energy]
  \label{lem:rc-lower-truncation-energy}
  Assume that \eqref{eq:rc-localized-score} in \Cref{lem:rc-noise-localization} holds.
  Suppose $1\le s=|\{i:r_i<t\}|\le n/2$.
  Then the lower truncation in \eqref{eq:rc-lower-truncation} satisfies
  \begin{equation}
    \bm z^\top\bm L_{\cD}\bm z
    \le C\kappa^2t^2s\log n.
    \label{eq:rc-lower-truncation-energy}
  \end{equation}
\end{lemma}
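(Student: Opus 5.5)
I would mirror the proof of \Cref{lem:rc-upper-truncation-energy}, now integrating the stationary-flow identity over the lower level sets $S_h^-:=\{i:r_i<h\}$ for $h\in[t-a,t]$. Since $0<a<t$, every such level satisfies $h>0$, and $S_h^-\subseteq\{i:r_i<t\}$, so each level set has size at most $s\le n/2$. Orient every boundary edge of $S_h^-$ as $e=(i,j)$ with $i\in S_h^-$ and $j\notin S_h^-$, so that $r_i<h\le r_j$. Summing \eqref{eq:rc-stationary-flow} over $S_h^-$ cancels the internal edges and gives
\[
  0=\sum_{e\in\partial S_h^-}(f_e r_i-f_{\bar e}r_j).
\]
Writing $f_e r_i-f_{\bar e}r_j=-\{f_e(h-r_i)+f_{\bar e}(r_j-h)\}+h(f_e-f_{\bar e})$, I define the nonnegative quantity
\[
  P^-(h):=\sum_{e\in\partial S_h^-}\{f_e(h-r_i)+f_{\bar e}(r_j-h)\}.
\]
The cut identity then becomes $P^-(h)=h\,\Gamma(S_h^-)$, with $\Gamma(S)=\one_S^\top\bm g_{\RC}=\sum_{e\in\partial S}\gamma_e$ under the outward orientation. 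This is the sign reversal relative to \eqref{eq:rc-upper-cut}.

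Next I integrate over $h\in[t-a,t]$. Fix an edge with $x:=r_j>r_i=:y$. It lies in $\partial S_h^-$ exactly when $y<h\le x$. The relevant interval $I=[t-a,t]\cap(y,x]$ has length
\[
  \ell=|T_{t,a}^-(r_i)-T_{t,a}^-(r_j)|=|z_i-z_j|.
\]
The same elementary computation as before shows that both $\int_I(h-y)\,dh$ and $\int_I(x-h)\,dh$ are at least $\ell^2/2$. Combining this with \eqref{eq:rc-edge-mass} gives
\[
  \int_{t-a}^tP^-(h)\,dh\ge\tfrac12\bm z^\top\bm L_{\cD}\bm z.
\]
On the noise side, define the transform
\[
  \psi^-_{t,a}(x):=\int_{t-a}^t h\,\ind\{x<h\}\,dh.
\]
A direct computation gives $\psi^-_{t,a}(x)=t\,T^-_{t,a}(x)-\tfrac12T^-_{t,a}(x)^2$. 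Interchanging sum and integral yields
\[
  \int_{t-a}^t h\,\Gamma(S_h^-)\,dh=\bm g_{\RC}^\top\bm\psi^-_{t,a}(\bm r).
\]
The vector $\bm\psi^-_{t,a}(\bm r)$ is supported on $\{i:r_i<t\}$, which has size $s\le n/2$.

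For the Lipschitz bound, note that
\[
  |\psi^-_{t,a}(x)-\psi^-_{t,a}(y)|=|T^-(x)-T^-(y)|\cdot\Bigl|t-\tfrac{T^-(x)+T^-(y)}{2}\Bigr|.
\]
Since $T^-\in[0,a]$ and $a<t$, the second factor lies in $[t-a,t]$ and is therefore at most $t$. This is why the final bound carries $t^2$ rather than $(t+a)^2$. Hence
\[
  \bm\psi^{-\top}\bm L_{\cD}\bm\psi^-\le t^2\,\bm z^\top\bm L_{\cD}\bm z.
\]
Chaining these estimates with \eqref{eq:rc-localized-score} from \Cref{lem:rc-noise-localization} gives
\[
  \tfrac12\bm z^\top\bm L_{\cD}\bm z\le\bm g_{\RC}^\top\bm\psi^-\le C\kappa\sqrt{s\log n}\;t\,\sqrt{\bm z^\top\bm L_{\cD}\bm z}.
\]
Rearranging proves \eqref{eq:rc-lower-truncation-energy}.

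The only delicate step is the sign and orientation bookkeeping in the cut identity, which now reads $P^-(h)=+h\Gamma(S_h^-)$, and checking that the boundary and endpoint conventions ($r_i<h\le r_j$) make the integration interval length exactly $|z_i-z_j|$. The rest is a direct transcription of the upper-cut argument.
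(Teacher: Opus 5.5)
Your proposal is correct and is essentially the paper's proof. The paper sets $\widetilde{\bm r}=-\bm r$ and reruns the upper-cut argument over the levels $[-t,-t+a]$; under the substitution $h\mapsto -h$ this is exactly your direct computation with $S_h^-$, $P^-(h)=h\,\Gamma(S_h^-)$, and $\psi^-_{t,a}=-\widetilde\psi$, and your Lipschitz factor $\bigl|t-\tfrac{T^-(x)+T^-(y)}{2}\bigr|\le t$ (using $a<t$) is the one step the paper singles out as new.
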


\begin{proof}
  Set $\widetilde{\bm r}:=-\bm r$.
  The stationary-flow identity \eqref{eq:rc-stationary-flow} is linear in $\bm r$, so it also holds with $\bm r$ replaced by $\widetilde{\bm r}$.
  Apply the upper-cut argument from the proof of \Cref{lem:rc-upper-truncation-energy} to $\widetilde{\bm r}$ over the level interval $[-t,-t+a]$.
  The upper level set at the initial level and the resulting truncation are
  \[
    \{i:\widetilde r_i>-t\}=\{i:r_i<t\},
    \qquad
    T_{-t,a}^+(\widetilde r_i)=T_{t,a}^-(r_i)=z_i.
  \]
  Thus the localized-score bound applies with support size $s$.

  It remains only to identify the Lipschitz factor in the sign-reversed argument.
  Define
  \[
    \widetilde\psi(x):=\int_{-t}^{-t+a}
      h\cdot\ind\{x>h\}\,dh.
  \]
  Since $T_{-t,a}^+(x),T_{-t,a}^+(y)\in[0,a]$, the same factorization as in \eqref{eq:rc-upper-lipschitz} gives
  \[
    \begin{aligned}
      \abs{\widetilde\psi(x)-\widetilde\psi(y)}
      &=
      \abs{T_{-t,a}^+(x)-T_{-t,a}^+(y)}\cdot
      \abs{-t+\frac{T_{-t,a}^+(x)+T_{-t,a}^+(y)}{2}} \\
      &\le
      t\cdot\abs{T_{-t,a}^+(x)-T_{-t,a}^+(y)}.
    \end{aligned}
  \]
  The last inequality uses $a<t$.
  The remainder of the proof of \Cref{lem:rc-upper-truncation-energy}, with this Lipschitz factor, yields \eqref{eq:rc-lower-truncation-energy}.
\end{proof}

\subsection{Completion of the Rank Centrality proof}

Condition on the observation design $\cD$.
Since $1+\kappa\le2\kappa^2$, the baseline condition \eqref{eq:general-baseline} implies the connectivity condition \eqref{eq:connectivity-baseline} in \Cref{lem:general-directed-connectivity}.
Hence \Cref{lem:general-directed-connectivity} shows that the empirical Rank Centrality chain is irreducible, so the Rank Centrality estimator exists uniquely.
Moreover, \Cref{lem:rc-noise-localization} shows that \eqref{eq:rc-localized-score} holds.
A union bound shows that both conclusions hold with probability at least $1-Cn^{-10}$.

Irreducibility gives $\wh\pi_i>0$ for every $i$, so the normalization factor $\xi$ in \eqref{eq:rc-relative-stationary} can be chosen to make $1$ a median of $\bm r$:
\begin{equation}
  |\{i:r_i>1\}|\le n/2,
  \qquad
  |\{i:r_i<1\}|\le n/2.
  \label{eq:rc-median-scale}
\end{equation}

\runinhead{Upper tail}
For $t\ge1$, let $s_+(t)=|\{i:r_i>t\}|$.
Set
\begin{equation}
  \eta:=A\kappa\sqrt{\frac{\log n}{\lambda_{\cD}}},
  \label{eq:rc-step-size}
\end{equation}
where $A$ is a sufficiently large universal constant.
Under \eqref{eq:general-baseline}, with its universal constant enlarged, $\eta\le1/2$.
By \eqref{eq:rc-median-scale}, $s_+(t)\le n/2$.
If $s_+(t)>0$, take $a=\eta t$ in the upper truncation \eqref{eq:rc-upper-truncation}.
It equals $\eta t$ on $\{r>t\cdot(1+\eta)\}$, so \Cref{lem:sparse-poincare,lem:rc-upper-truncation-energy} give the level-set recurrence
\begin{equation}
  (\eta t)^2s_+\bigl(t\cdot(1+\eta)\bigr)
  \le \norm{\bm z}_2^2
  \le C\frac{\kappa^2}{\lambda_{\cD}}\,
      t^2(1+\eta)^2s_+(t)\log n.
  \label{eq:rc-upper-level-recurrence}
\end{equation}
Substituting \eqref{eq:rc-step-size} and choosing $A$ large enough yields
\begin{equation}
  s_+\bigl(t\cdot(1+\eta)\bigr)\le\frac14s_+(t).
  \label{eq:rc-upper-contraction}
\end{equation}

Starting from $t_0=1$, define $t_{k+1}=t_k\cdot(1+\eta)$ whenever $s_+(t_k)>0$.
Define the actual stopping time
\[
  R_+:=\inf\{k\ge0:s_+(t_k)=0\}.
\]
By \eqref{eq:rc-upper-contraction}, $R_+\le\lceil\log_4n\rceil+1$.
Since $s_+(t_{R_+})=0$ implies $\max_i r_i\le t_{R_+}$,
\begin{equation}
  \log\max_i r_i
  \le \log t_{R_+}
  =R_+\cdot\log(1+\eta)
  \le R_+\cdot\eta
  \le C\kappa\frac{\log^{3/2} n}{\sqrt{\lambda_{\cD}}}.
  \label{eq:rc-upper-final}
\end{equation}

\runinhead{Lower tail}
For $0<t\le1$, let $s_-(t)=|\{i:r_i<t\}|$.
By \eqref{eq:rc-median-scale}, $s_-(t)\le n/2$.
When this set is nonempty, take $a=\eta t$ in the lower truncation \eqref{eq:rc-lower-truncation}.
Since $\eta\le1/2$, the requirement $0<a<t$ is satisfied.
This truncation equals $\eta t$ on $\{r<t\cdot(1-\eta)\}$.
Thus \Cref{lem:sparse-poincare,lem:rc-lower-truncation-energy} give the lower-truncation analogue of \eqref{eq:rc-upper-level-recurrence}.
Together with \eqref{eq:rc-step-size}, this yields
\begin{equation}
  s_-\bigl(t\cdot(1-\eta)\bigr)\le\frac14s_-(t).
  \label{eq:rc-lower-contraction}
\end{equation}

Starting from $t_0=1$, define $t_{k+1}=t_k\cdot(1-\eta)$ whenever $s_-(t_k)>0$.
Let $R_-:=\inf\{k\ge0:s_-(t_k)=0\}$ be the corresponding stopping time.
By \eqref{eq:rc-lower-contraction}, $R_-\le\lceil\log_4n\rceil+1$.
Since $s_-(t_{R_-})=0$ implies $\min_i r_i\ge t_{R_-}$, and since $-\log(1-x)\le2x$ for $x\le1/2$,
\begin{align}
  -\log\min_i r_i
   & \le-\log t_{R_-}
  =R_-\cdot[-\log(1-\eta)] \\
   & \le2R_-\cdot\eta
  \le C\kappa\frac{\log^{3/2} n}{\sqrt{\lambda_{\cD}}}.
  \label{eq:rc-lower-final}
\end{align}
Consequently,
\begin{equation}
  \max_i\log r_i-\min_i\log r_i
  \le C\kappa\frac{\log^{3/2} n}{\sqrt{\lambda_{\cD}}}.
  \label{eq:rc-log-range}
\end{equation}

\runinhead{Conclusion}
Finally, \eqref{eq:rc-rescaled-weights} and \eqref{eq:rc-relative-stationary} give
\[
  \log r_i=\log\wh\pi_i-\theta_i^\star+\text{constant}.
\]
After centering,
\[
  \wh{\bm\theta}^{\RC}-\bm\theta^\star
  =\log\bm r-\frac1n(\one_n^\top\log\bm r)\one_n.
\]
The mean of $\log\bm r$ lies between its minimum and maximum, so the $\ell_\infty$ norm of this centered vector is at most $\max_i\log r_i-\min_i\log r_i$.
Equation \eqref{eq:rc-log-range} proves the Rank Centrality bound in \eqref{eq:general-entrywise}.
Combining this event with the MLE event from \Cref{sec:mle-proof} and applying a union bound proves \Cref{thm:general-entrywise}.

%% file: sections/d_outcome_adaptive_proofs.tex
\section{Proofs for outcome-adaptive augmentation}
\label{sec:completion}

Recall the initial design $\cD_0$ and final design $\cD$ from \Cref{sec:random-design-models}.
The proof uses the two stages separately: the initial outcomes concentrate with $\cD_0$ fixed, and the fresh outcomes concentrate conditional on the initial data and the chosen augmentation.
We use the resulting score-localization bounds together with Laplacian monotonicity to prove the general outcome-adaptive guarantee.

\begin{lemma}[Algebraic connectivity of an Erd\H{o}s--R\'enyi graph]
  \label{lem:er-gap}
  If $G_0\sim G(n,p)$ and $np\ge C\log n$, then, with probability at least $1-Cn^{-10}$,
  \begin{equation}
    \lambda_2(\bm L_{G_0})\ge\frac12np.
    \label{eq:er-gap}
  \end{equation}
  Consequently, for any integer $L\ge1$, every observation design $\cD$ with at least $L$ comparisons on each edge of $G_0$ satisfies $\lambda_{\cD}\ge npL/2$ on this event.
\end{lemma}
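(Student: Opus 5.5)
The plan is to compare the random Laplacian $\bm L_{G_0}$ with its expectation and control the deviation in operator norm on $\one_n^\perp$. Write $\bm L_{G_0}=\bm D-\bm A$, with $\bm A$ the adjacency matrix and $\bm D$ the degree matrix. Its expectation is $\E\bm L_{G_0}=p(n\bm I-\one_n\one_n^\top)$. On $\one_n^\perp$ this acts as $np\,\bm I$, so $\lambda_2(\E\bm L_{G_0})=np$. By Weyl's inequality restricted to $\one_n^\perp$ (both matrices annihilate $\one_n$), it suffices to show
\[
\norm{\bm D-\E\bm D}+\norm{\bm A-\E\bm A}\le \tfrac12 np
\]
with probability at least $1-Cn^{-10}$.

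\textbf{Degree term.} $\bm D-\E\bm D$ is diagonal, so its operator norm equals $\max_i|d_i-(n-1)p|$. Each $d_i$ is a sum of $n-1$ independent Bernoulli$(p)$ variables. Bernstein's (or Chernoff's) inequality with a union bound over $i$ gives
\[
\max_i|d_i-(n-1)p|\le C'\sqrt{np\log n}+C'\log n
\]
with probability at least $1-n^{-11}$. Since $np\ge C\log n$ with $C$ large, this is at most $np/8$ (after also absorbing the $p$ discrepancy between $n-1$ and $n$).

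\textbf{Adjacency term.} This is the main obstacle, because a crude matrix Bernstein bound gives $\sqrt{np\log n}+\log n$, which still suffices here. Matrix Bernstein applies to $\sum_{i<j}(A_{ij}-p)(\bm e_i\bm e_j^\top+\bm e_j\bm e_i^\top)$: the summands have norm at most $1$, and the variance proxy has norm at most $np$. Hence
\[
\norm{\bm A-\E\bm A}\le C'(\sqrt{np\log n}+\log n)\le np/8
\]
with probability at least $1-n^{-10}$, again using $np\ge C\log n$. Alternatively, one can simply cite the standard Erd\H{o}s--R\'enyi spectral concentration bound \citep[Lemma~6.4]{ChenFanMaWang2019}, which gives the sharper $\sqrt{np}$ rate. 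The diagonal correction in $\E\bm A$ (the term $p\bm I$) only contributes $p\le np/8$.

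\textbf{Conclusion.} Combining the three bounds gives $\lambda_2(\bm L_{G_0})\ge np-3np/8\ge np/2$. For the consequence, observe that each summand $L_e\bm b_e\bm b_e^\top$ of $\bm L_{\cD}$ is positive semidefinite. If $\cD$ has at least $L$ comparisons on every edge of $G_0$, then $\bm L_{\cD}\succeq L\bm L_{G_0}$. Both sides vanish on $\one_n$, so the variational characterization on $\one_n^\perp$ gives $\lambda_{\cD}\ge L\lambda_2(\bm L_{G_0})\ge npL/2$ on the same event. This holds deterministically, hence simultaneously for every such design, including adversarially augmented ones.
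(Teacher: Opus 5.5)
Your proof is correct. The consequence is argued exactly as in the paper: $\bm L_{\cD}-L\bm L_{G_0}$ is a nonnegative combination of the terms $\bm b_e\bm b_e^\top$, so $\bm L_{\cD}\succeq L\bm L_{G_0}$. The Rayleigh quotient over $\one_n^\perp$ then gives $\lambda_{\cD}\ge L\lambda_2(\bm L_{G_0})$.

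For the first claim, the paper gives no argument and simply cites \citet[Lemma~6.4]{ChenFanMaWang2019}. Your derivation is a valid self-contained replacement: Weyl on $\one_n^\perp$ against $\E\bm L_{G_0}=p(n\bm I-\one_n\one_n^\top)$, scalar Bernstein with a union bound for the degrees, and matrix Bernstein for $\bm A-\E\bm A$. It also shows that the crude deviation $\sqrt{np\log n}+\log n$ already suffices for a constant-factor lower bound once $np\ge C\log n$. So the sharper $\sqrt{np}$ rate behind the cited lemma is not needed here, and the citation buys only brevity.

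One cosmetic point: $\E\bm D=(n-1)p\bm I$ and $\E\bm A=p(\one_n\one_n^\top-\bm I)$ exactly. Your matrix Bernstein sum therefore already equals $\bm A-\E\bm A$, and the extra ``$p\bm I$ correction'' and the ``$n-1$ versus $n$'' absorption are unnecessary. They are harmless, since they only loosen the bound.
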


\begin{proof}
  The first claim follows from \citet[Lemma~6.4]{ChenFanMaWang2019}.
  If $\cD$ contains at least $L$ observations on every edge of $G_0$, then, for every $\bm v\perp\one_n$,
  \[
    \bm v^\top\bm L_{\cD}\bm v
    \ge L\bm v^\top\bm L_{G_0}\bm v.
  \]
  Taking the minimum over unit vectors orthogonal to $\one_n$ gives
  \[
    \lambda_{\cD}
    \ge L\lambda_2(\bm L_{G_0})
    \ge\frac12npL.
  \]
\end{proof}

\begin{lemma}[Two-stage localized score control]
  \label{lem:adaptive-split-localization}
  Under \Cref{def:semirandom}, suppose that $G_0$ is connected.
  Let $\bm g_{\MLE}$ and $\bm g_{\RC}$ be the final-design score vectors in \eqref{eq:mle-score} and \eqref{eq:rc-score}.
  Uniformly over the adversary, with probability at least $1-Cn^{-10}$, simultaneously for every vector $\bm v$ supported on a set of size $s\le n/2$,
  \begin{align}
    \abs{\bm g_{\MLE}^\top\bm v}
     &\le C\sqrt{s\log n}
       \sqrt{\bm v^\top\bm L_{\cD}\bm v},
    \label{eq:adaptive-mle-localization} \\
    \abs{\bm g_{\RC}^\top\bm v}
     &\le C\kappa\sqrt{s\log n}
       \sqrt{\bm v^\top\bm L_{\cD}\bm v}.
    \label{eq:adaptive-rc-localization}
  \end{align}
\end{lemma}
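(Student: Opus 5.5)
We split each score vector into an initial-stage part and a fresh-stage part, bound each part with \Cref{lem:localized-process}, and then pass from the two stage Laplacians to $\bm L_{\cD}$ by monotonicity. On each pair $e$, the final average $\overline Y_e$ is the pooled average of the $L_e^{(0)}$ initial outcomes and the $M_e$ fresh outcomes. Hence $L_e(\sig(\bm b_e^\top\bm\theta^\star)-\overline Y_e)$ is the sum of an initial term $\zeta_e^{(0)}$ and a fresh term $\zeta_e^{(1)}$. Each term is a sum of centered Bernoulli deviations over its own repetitions. This gives $\bm g_{\MLE}=\bm g^{(0)}+\bm g^{(1)}$, with $\bm g^{(0)}=\sum_{e\in E_0}\zeta_e^{(0)}\bm b_e$ depending only on the initial data and $\bm g^{(1)}=\sum_e\zeta_e^{(1)}\bm b_e$ built from the fresh comparisons. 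For Rank Centrality, the imbalance $\gamma_e=L_e(\bar\alpha_i^\star+\bar\alpha_j^\star)(\wh q_{ij}-q_{ij}^\star)$ in \eqref{eq:rc-centered-edge-noise} is also linear in the pooled empirical frequency. It therefore splits in the same way, with $\gamma_e^{(0)}$ and $\gamma_e^{(1)}$ the corresponding per-stage sums.

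\emph{Initial stage.} The design $\cD_0$ is fixed and $G_0$ is connected. The $\zeta_e^{(0)}$ are independent and centered, and by Hoeffding's lemma they satisfy \eqref{eq:edge-mgf} with $L_e^{(0)}$ and a universal $\tau$; for $\gamma_e^{(0)}$ we take $\tau=\kappa$, exactly as in \eqref{eq:rc-noise-mgf}. \Cref{lem:localized-process} applied to $\cD_0$ then gives, with probability $1-Cn^{-10}$, the bound $|\bm g^{(0)\top}\bm v|\le C\tau\sqrt{s\log n}\sqrt{\bm v^\top\bm L_{\cD_0}\bm v}$ for all $s$-sparse $\bm v$. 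This event does not involve the adversary at all.

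\emph{Fresh stage.} We condition on the initial data and on the adversary's choice $\{M_{ij}\}$. The fresh counts then define a fixed multigraph $\cD_1$ with Laplacian $\bm L_{\cD_1}$. Given this information, the $\zeta_e^{(1)}$ are independent, centered, and satisfy the MGF bound with $M_e$ in place of $L_e$.

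The main obstacle is that $\cD_1$ need not be connected, and \Cref{lem:localized-process} as stated assumes connectivity. Connectivity is used there only to make $\norm{\cdot}_{\cD}$ a norm on $\cV_S$. We handle this by running the same covering argument on the quotient of $\cV_S$ by $\ker\bm L_{\cD_1}\cap\cV_S$, a space of dimension at most $s$, so the net still has at most $5^s$ points. The process $\bm z^\top\bm v$ vanishes almost surely on that kernel: if $\bm v^\top\bm L_{\cD_1}\bm v=0$, then $\bm b_e^\top\bm v=0$ on every edge with $M_e>0$, so $\bm z^\top\bm v=0$. Hence the same $1-Cn^{-10}$ bound holds conditionally with $\bm L_{\cD_1}$, even if $\cD_1$ is disconnected. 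Since this conditional bound holds for every realization of the conditioning, it also holds unconditionally, uniformly over the adversary.

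\emph{Combining.} On the intersection of the two events, the triangle inequality gives
\[
|\bm g_{\MLE}^\top\bm v|\le C\sqrt{s\log n}\left(\sqrt{\bm v^\top\bm L_{\cD_0}\bm v}+\sqrt{\bm v^\top\bm L_{\cD_1}\bm v}\right).
\]
Since $\bm L_{\cD}=\bm L_{\cD_0}+\bm L_{\cD_1}$ and both are positive semidefinite, we have $\sqrt{x}+\sqrt{y}\le\sqrt{2(x+y)}$, and the right-hand side is at most $C'\sqrt{s\log n}\sqrt{\bm v^\top\bm L_{\cD}\bm v}$. This proves \eqref{eq:adaptive-mle-localization}. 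The Rank Centrality bound \eqref{eq:adaptive-rc-localization} follows identically with the extra factor $\kappa$. A union bound over the four events (two stages, two estimators) completes the proof.
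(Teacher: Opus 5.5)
Your proof is correct. Its skeleton matches the paper's: split each score into initial and fresh parts, apply Hoeffding and \Cref{lem:localized-process} stage by stage, and pass to $\bm L_{\cD}$ by Laplacian monotonicity.

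The one genuine difference is the fresh stage. You apply \Cref{lem:localized-process} to the augmentation-only design $\cD_1$. That forces you to extend the lemma to disconnected designs by quotienting $\cV_S$ by $\ker\bm L_{\cD_1}\cap\cV_S$. The extension is valid: $\zeta_e^{(1)}=0$ identically when $M_e=0$, so $\bm z^\top\bm v$ is constant on cosets, and the $5^s$ net bound survives.

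The paper sidesteps this obstacle entirely. \Cref{lem:localized-process} needs only an upper bound on the sub-Gaussian variance proxy in \eqref{eq:edge-mgf}. The fresh terms satisfy that bound with the final counts $L_e\ge M_e$, trivially so on edges with $M_e=0$. Hence, conditional on the initial data and the chosen counts, the lemma applies verbatim to the final design $\cD$, which is connected because it contains $\cD_0$. This gives the fresh bound directly in terms of $\bm L_{\cD}$. The combination step then needs only $\bm L_{\cD_0}\preceq\bm L_{\cD}$ rather than $\bm L_{\cD}=\bm L_{\cD_0}+\bm L_{\cD_1}$ together with $\sqrt{x}+\sqrt{y}\le\sqrt{2(x+y)}$.

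What each route buys: yours gives a slightly sharper intermediate bound, with the exact variance proxy $\bm L_{\cD_1}$, plus a localization lemma valid for disconnected designs, at the cost of reopening that lemma's proof. The paper's route reuses the lemma as a black box. The final statements agree up to constants.
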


\begin{proof}
Because $G_0$ is connected and all initial edge counts are positive, $\cD_0$ is connected.
Fix an orientation for every pair.
For $a\in\{\MLE,\RC\}$, decompose the final score as
\[
  \bm g_a=\bm g_{a,0}+\bm g_{a,+},
  \qquad \tau_{\MLE}=1,\qquad \tau_{\RC}=\kappa,
\]
where the two terms contain the initial and fresh score contributions, respectively.
With $\cD_0$ fixed, the initial edge contributions are independent and centered.
Hoeffding's lemma and \Cref{lem:localized-process}, applied to $\cD_0$, give, except with probability $Cn^{-10}$, simultaneously for every $\bm v$ supported on a set of size $s\le n/2$,
\begin{equation}
  \abs{\bm g_{a,0}^\top\bm v}
  \le C\tau_a\sqrt{s\log n}
  \sqrt{\bm v^\top\bm L_{\cD_0}\bm v}.
  \label{eq:adaptive-initial-localization}
\end{equation}
This bound holds over all such test vectors, so it remains usable after the augmentation is chosen.

Let $\mathcal F_{\mathrm{aug}}$ be the sigma-field generated by $\cD_0$, the initial outcomes, and the chosen counts $\{M_{ij}\}$.
Conditional on $\mathcal F_{\mathrm{aug}}$, the fresh score contributions are independent and centered.
The same Hoeffding bounds as in the MLE and Rank Centrality proofs apply with counts $M_{ij}\le L_{ij}$.
Applying \Cref{lem:localized-process} conditionally to the connected final design $\cD$ therefore gives, except with conditional probability $Cn^{-10}$,
\begin{equation}
  \abs{\bm g_{a,+}^\top\bm v}
  \le C\tau_a\sqrt{s\log n}
  \sqrt{\bm v^\top\bm L_{\cD}\bm v}
  \label{eq:adaptive-fresh-localization}
\end{equation}
simultaneously over the same class of test vectors.
The augmentation graph itself need not be connected.

Since $\bm L_{\cD_0}\preceq\bm L_{\cD}$, adding the two bounds proves \eqref{eq:adaptive-mle-localization} and \eqref{eq:adaptive-rc-localization}.
Taking a union bound over the two stages and two estimators gives probability at least $1-Cn^{-10}$.
\end{proof}

\subsection{Proof of Theorem~\ref{thm:semirandom}}
\label{sec:semirandom-theorem-proof}
By \eqref{eq:semirandom-baseline}, $\lambda_{\cD_0}\ge C\kappa^2\log n$.
Since $M_{ij}\ge0$,
\[
  \bm L_{\cD}-\bm L_{\cD_0}
  =\sum_{1\le i<j\le n}M_{ij}
    (\bm e_i-\bm e_j)(\bm e_i-\bm e_j)^\top
  \succeq\bm 0,
  \qquad
  \lambda_{\cD}\ge\lambda_{\cD_0}.
\]

\noindent\textbf{Well-posedness.}
Because $1+\kappa\le2\kappa^2$, the initial spectral condition implies \eqref{eq:connectivity-baseline} for $\cD_0$.
Applying \Cref{lem:general-directed-connectivity} to the initial comparisons therefore shows that their directed win graph is strongly connected except with probability $Cn^{-10}$.
All its arcs remain in the final directed win graph.
Hence the MLE exists uniquely by \Cref{lem:mle-existence}.
The empirical Rank Centrality chain is irreducible, so the Rank Centrality estimator also exists uniquely.

\noindent\textbf{Entrywise control.}
By \Cref{lem:adaptive-split-localization}, the final MLE and Rank Centrality scores satisfy their respective localized-score bounds uniformly over the adversarial augmentation.
Repeating the deterministic level-set arguments in \Cref{sec:mle-proof,sec:rc-proof}, now with these score bounds and the final design $\cD$, gives
\[
  \max_{a\in\{\MLE,\RC\}}
  \norm{\wh{\bm\theta}^{a}-\bm\theta^\star}_\infty
  \le C_{\mathrm{inf}}\kappa
  \frac{\log^{3/2}n}{\sqrt{\lambda_{\cD}}}.
\]
A union bound over all high-probability events gives the stated probability.

\subsection{A coupling for heterogeneous sampling}
\label{sec:heterogeneous-coupling}
\begin{proposition}[Heterogeneous sampling is semirandom]
  \label{prop:heterogeneous-is-semirandom}
  Fix $p\in(0,1]$ and an integer $L\ge1$.
  Suppose each pair $\{i,j\}$ is observed independently with probability $p_{ij}\in[p,1]$.
  On each observed pair, generate $L$ independent BTL comparisons.
  Then the joint law of the observed design and comparison outcomes can be realized under \Cref{def:semirandom} after drawing $G_0\sim G(n,p)$ and assigning $L$ initial comparisons to each edge of $G_0$.
\end{proposition}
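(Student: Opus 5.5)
The plan is a standard coupling through independent uniform variables, which splits each pair's inclusion into an initial Erd\H{o}s--R\'enyi event and a later top-up event. For every pair $\{i,j\}$, draw independent $U_{ij}\sim\mathrm{Uniform}[0,1]$. Declare $\{i,j\}\in E_0$ if and only if $U_{ij}\le p$, so that the pairs are included independently with probability $p$ and $G_0\sim G(n,p)$. Declare $\{i,j\}$ observed in the heterogeneous model if and only if $U_{ij}\le p_{ij}$. Because $p_{ij}\ge p$, every edge of $G_0$ is observed. The marginal probability that $\{i,j\}$ is observed is $p_{ij}$, and these events are independent across pairs. Hence the observed edge set has exactly the heterogeneous law.

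Next I will realize the construction as an adversary in the sense of \Cref{def:semirandom}. In step (i), assign $L$ initial comparisons to each edge of $G_0$. In step (ii), the adversary sets $M_{ij}=L$ if $p<U_{ij}\le p_{ij}$ and $M_{ij}=0$ otherwise. Its choice depends only on the auxiliary uniforms. These are independent of the outcomes, and we may regard them as the adversary's internal randomness; a randomized adversary can be handled by conditioning on its randomness, or the uniforms can be treated as part of the initial information. Then $M_{ij}$ fresh comparisons are drawn on each pair, and the final counts are $L_{ij}=L\,\ind\{U_{ij}\le p_{ij}\}$. Since the adversary chooses $M_{ij}\in\{0,L\}$, no pair receives more than $L$ comparisons in total, so the counts match the heterogeneous model exactly.

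Finally, I will check that the joint law of the design and the outcomes agrees with the heterogeneous model. Conditional on the uniforms, and hence on the final design, each observed pair carries exactly $L$ comparisons. Edges of $E_0$ get their $L$ comparisons in the initial stage, and the added edges get theirs in the fresh stage. All of these are independent BTL draws, because the fresh draws are independent of the initial data given the choices. This is the same conditional law of outcomes given the design as in the heterogeneous model, and together with the equality of design laws it proves the claim.

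The only delicate point is a formal one: \Cref{def:semirandom} must allow the adversary's choice to use randomness external to the initial outcomes. If it only permits deterministic functions of $\cD_0$ and the outcomes, I would instead draw the top-up indicators conditionally on $G_0$. Given $U_{ij}>p$, the pair is observed with probability $(p_{ij}-p)/(1-p)$, independently across pairs, and I would note that \Cref{thm:semirandom} holds uniformly over adversaries, so it extends to mixtures of adversaries by averaging.
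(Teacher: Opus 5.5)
Your proof is correct and is essentially the paper's argument: the same uniform coupling with $\ind\{U_{ij}\le p\}$ defining $G_0$, $\ind\{U_{ij}\le p_{ij}\}$ defining the observed pairs, and top-up counts $M_{ij}=L$ exactly when $p<U_{ij}\le p_{ij}$, so the final count on each pair is $L\,\ind\{U_{ij}\le p_{ij}\}$. Your remark that the adversary's choice uses randomness beyond the initial data, which is harmless because \Cref{thm:semirandom} holds uniformly over adversaries and one can condition on that independent randomness, makes explicit a point the paper's proof leaves implicit.
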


\begin{proof}
  Independently for each pair $\{i,j\}$, draw $U_{ij}\sim\operatorname{Unif}[0,1]$ and set
  \[
    B_{ij}=\ind\{U_{ij}\le p\},
    \qquad
    A_{ij}=\ind\{U_{ij}\le p_{ij}\}.
  \]
  Since $p_{ij}\ge p$, the variables $B_{ij}$ form an initial $G(n,p)$ graph contained in the heterogeneous graph defined by the $A_{ij}$.
  Generate $L$ initial comparisons on its edges and set
  \[
    M_{ij}=L(A_{ij}-B_{ij}).
  \]
  Since $B_{ij}\le A_{ij}$, we have $M_{ij}\ge0$, and the final count on each pair is $LA_{ij}$.
  Generating the $M_{ij}$ additional comparisons in the augmentation stage gives the heterogeneous-sampling law.
\end{proof}

%% file: sections/e_experimental_implementation.tex
\section{Experimental implementation}
\label{sec:experimental-details}

This appendix supplements the experimental descriptions in \Cref{sec:experiments}.
See the code for the exact implementation. 

\paragraph*{Degree-matched Arena replay.}

The original data contain four outcome categories: a win for model A, a win for model B, a tie, or a judgment that both models are bad.
We only keep the decisive votes for the BTL analysis.
In each trial, we use a fixed set of held-out reference votes. For both local repetition and global linking, we start with the same initial graph and grow it at each step. Each larger design inherits the observations from the smaller design.

\paragraph*{Complete-multipartite simulation.}

In each trial, independent Gaussian scores are centered and rescaled to have range one. All designs use the same latent score vector.

\paragraph*{Monotone-augmentation simulation.}

For the entrywise-error panel, the latent scores are equally spaced between $-1/2$ and $1/2$.
For exact recovery, we assign score $\Delta_K/2$ to $K-1$ items and $-\Delta_K/2$ to $n-K-1$ items, with the remaining two items assigned scores $0.75$ and $-0.75$.
We then subtract the mean from all scores, preserving the top-$K$ gap $\Delta_K$ and the total score range $1.5$.
Item labels are randomly permuted in every trial.

The SDP weights maximize weighted algebraic connectivity subject to $0\le w_{ij}\le1$ and $\sum_j w_{ij}\le2np_0$ for each item, where $p_0$ is the base-edge probability.
We solve the SDP using deterministic cutting-plane linear programs.

%% file: references.bib
@article{Zermelo1929,
  author  = {Zermelo, Ernst},
  title   = {{Die Berechnung der Turnier-Ergebnisse als ein Maximumproblem der Wahrscheinlichkeitsrechnung}},
  journal = {Mathematische Zeitschrift},
  volume  = {29},
  number  = {1},
  pages   = {436--460},
  year    = {1929},
  doi     = {10.1007/BF01180541}
}

@article{BradleyTerry1952,
  author  = {Bradley, Ralph Allan and Terry, Milton E.},
  title   = {Rank Analysis of Incomplete Block Designs: {I}. {The} Method of Paired Comparisons},
  journal = {Biometrika},
  volume  = {39},
  number  = {3--4},
  pages   = {324--345},
  year    = {1952},
  doi     = {10.1093/biomet/39.3-4.324}
}

@book{Luce1959,
  author    = {Luce, R. Duncan},
  title     = {Individual Choice Behavior: A Theoretical Analysis},
  publisher = {Wiley},
  address   = {New York},
  year      = {1959}
}

@article{Ford1957,
  author  = {Ford, Jr., Lester R.},
  title   = {Solution of a Ranking Problem from Binary Comparisons},
  journal = {The American Mathematical Monthly},
  volume  = {64},
  number  = {8, Part 2},
  pages   = {28--33},
  year    = {1957},
  doi     = {10.2307/2308513}
}

@article{Cattelan2012,
  author  = {Cattelan, Manuela},
  title   = {Models for Paired Comparison Data: A Review with Emphasis on Dependent Data},
  journal = {Statistical Science},
  volume  = {27},
  number  = {3},
  pages   = {412--433},
  year    = {2012},
  doi     = {10.1214/12-STS396}
}

@article{NegahbanOhShah2017,
  author  = {Negahban, Sahand and Oh, Sewoong and Shah, Devavrat},
  title   = {{Rank Centrality}: Ranking from Pairwise Comparisons},
  journal = {Operations Research},
  volume  = {65},
  number  = {1},
  pages   = {266--287},
  year    = {2017},
  doi     = {10.1287/opre.2016.1534}
}

@inproceedings{ChenSuh2015,
  author    = {Chen, Yuxin and Suh, Changho},
  title     = {Spectral {MLE}: Top-{$K$} Rank Aggregation from Pairwise Comparisons},
  booktitle = {Proceedings of the 32nd International Conference on Machine Learning},
  series    = {Proceedings of Machine Learning Research},
  volume    = {37},
  pages     = {371--380},
  year      = {2015},
  url       = {https://proceedings.mlr.press/v37/chena15.html}
}

@misc{JangKimSuhOh2016,
  author       = {Jang, Minje and Kim, Sunghyun and Suh, Changho and Oh, Sewoong},
  title        = {Top-{$K$} Ranking from Pairwise Comparisons: When Spectral Ranking Is Optimal},
  howpublished = {arXiv preprint arXiv:1603.04153},
  year         = {2016},
  url          = {https://arxiv.org/abs/1603.04153}
}

@article{ChenFanMaWang2019,
  author  = {Chen, Yuxin and Fan, Jianqing and Ma, Cong and Wang, Kaizheng},
  title   = {Spectral Method and Regularized {MLE} Are Both Optimal for top-{$K$} Ranking},
  journal = {The Annals of Statistics},
  volume  = {47},
  number  = {4},
  pages   = {2204--2235},
  year    = {2019},
  doi     = {10.1214/18-AOS1745}
}

@article{ChenGaoZhang2022,
  author  = {Chen, Pinhan and Gao, Chao and Zhang, Anderson Y.},
  title   = {Partial Recovery for top-{$k$} Ranking: Optimality of {MLE} and Suboptimality of the Spectral Method},
  journal = {The Annals of Statistics},
  volume  = {50},
  number  = {3},
  pages   = {1618--1652},
  year    = {2022},
  doi     = {10.1214/21-AOS2166}
}

@inproceedings{LiShrotriyaRinaldo2022,
  author    = {Li, Wanshan and Shrotriya, Shamindra and Rinaldo, Alessandro},
  title     = {$\ell_\infty$-Bounds of the {MLE} in the {BTL} Model under General Comparison Graphs},
  booktitle = {Proceedings of the Thirty-Eighth Conference on Uncertainty in Artificial Intelligence},
  series    = {Proceedings of Machine Learning Research},
  volume    = {180},
  pages     = {1178--1187},
  year      = {2022},
  url       = {https://proceedings.mlr.press/v180/li22g.html}
}

@misc{LiXu2026,
  author       = {Li, Yicheng and Xu, Huifu},
  title        = {Error Bounds for Statistical Estimators in {BTL} Model with Parametric Multivariate Utility Functions},
  howpublished = {arXiv preprint arXiv:2609.26326},
  year         = {2026},
  url          = {https://arxiv.org/abs/2609.26326}
}

@article{YanYangXu2012,
  author  = {Yan, Ting and Yang, Yaning and Xu, Jinfeng},
  title   = {Sparse Paired Comparisons in the {Bradley--Terry} Model},
  journal = {Statistica Sinica},
  volume  = {22},
  number  = {3},
  pages   = {1305--1318},
  year    = {2012},
  doi     = {10.5705/ss.2010.299}
}

@misc{Chen2023General,
  author       = {Chen, Yanxi},
  title        = {Ranking from Pairwise Comparisons in General Graphs and Graphs with Locality},
  howpublished = {arXiv preprint arXiv:2304.06821},
  year         = {2023},
  url          = {https://arxiv.org/abs/2304.06821}
}

@article{HanYeTanChen2020,
  author  = {Han, Ruijian and Ye, Rougang and Tan, Chunxi and Chen, Kani},
  title   = {Asymptotic Theory of Sparse {Bradley--Terry} Model},
  journal = {The Annals of Applied Probability},
  volume  = {30},
  number  = {5},
  pages   = {2491--2515},
  year    = {2020},
  doi     = {10.1214/20-AAP1564}
}

@article{HanXuChen2023,
  author  = {Han, Ruijian and Xu, Yiming and Chen, Kani},
  title   = {A General Pairwise Comparison Model for Extremely Sparse Networks},
  journal = {Journal of the American Statistical Association},
  volume  = {118},
  number  = {544},
  pages   = {2422--2432},
  year    = {2023},
  doi     = {10.1080/01621459.2022.2053137}
}

@article{GaoShenZhang2023,
  author  = {Gao, Chao and Shen, Yandi and Zhang, Anderson Y.},
  title   = {Uncertainty Quantification in the {Bradley--Terry--Luce} Model},
  journal = {Information and Inference: A Journal of the IMA},
  volume  = {12},
  number  = {2},
  pages   = {1073--1140},
  year    = {2023},
  doi     = {10.1093/imaiai/iaac032}
}

@inproceedings{YangChenOrecchiaMa2024,
  author    = {Yang, Yuepeng and Chen, Antares and Orecchia, Lorenzo and Ma, Cong},
  title     = {Top-{$K$} Ranking with a Monotone Adversary},
  booktitle = {Proceedings of Thirty Seventh Conference on Learning Theory},
  series    = {Proceedings of Machine Learning Research},
  volume    = {247},
  pages     = {5123--5162},
  year      = {2024},
  url       = {https://proceedings.mlr.press/v247/yang24b.html}
}

@inproceedings{LeeMakurSingh2026,
  author    = {Lee, Dongmin and Makur, Anuran and Singh, Japneet},
  title     = {Entrywise Error Bounds for Spectral Ranking with Semi-Random Adversaries},
  booktitle = {Proceedings of the 32nd ACM SIGKDD Conference on Knowledge Discovery and Data Mining V.2},
  pages     = {2449--2460},
  year      = {2026},
  doi       = {10.1145/3770855.3818135}
}

@article{YangMa2026Rasch,
  author  = {Yang, Yuepeng and Ma, Cong},
  title   = {Random Pairing {MLE} for Estimation of Item Parameters in {Rasch} Model},
  journal = {Journal of the American Statistical Association},
  volume  = {121},
  number  = {554},
  pages   = {1683--1694},
  year    = {2026},
  doi     = {10.1080/01621459.2025.2582873}
}

@inproceedings{NguyenZhang2023,
  author    = {Nguyen, Duc and Zhang, Anderson Ye},
  title     = {Optimal and Private Learning from Human Response Data},
  booktitle = {Proceedings of the 26th International Conference on Artificial Intelligence and Statistics},
  series    = {Proceedings of Machine Learning Research},
  volume    = {206},
  pages     = {922--958},
  year      = {2023},
  url       = {https://proceedings.mlr.press/v206/nguyen23a.html}
}

@article{FanLouWangYu2026,
  author  = {Fan, Jianqing and Lou, Zhipeng and Wang, Weichen and Yu, Mengxin},
  title   = {Spectral Ranking Inferences Based on General Multiway Comparisons},
  journal = {Operations Research},
  volume  = {74},
  number  = {1},
  pages   = {161--180},
  year    = {2026},
  doi     = {10.1287/opre.2023.0439}
}

@inproceedings{MohajerSuhElmahdy2017,
  author    = {Mohajer, Soheil and Suh, Changho and Elmahdy, Adel},
  title     = {Active Learning for top-{$K$} Rank Aggregation from Noisy Comparisons},
  booktitle = {Proceedings of the 34th International Conference on Machine Learning},
  series    = {Proceedings of Machine Learning Research},
  volume    = {70},
  pages     = {2488--2497},
  year      = {2017},
  url       = {https://proceedings.mlr.press/v70/mohajer17a.html}
}

@inproceedings{GoldbergerRudi2026,
  author    = {Goldberger, Motti and Rudi, Nils},
  title     = {Optimal Top-{$k$} Identification from Pairwise Comparisons},
  booktitle = {Proceedings of the 43rd International Conference on Machine Learning},
  series    = {Proceedings of Machine Learning Research},
  volume    = {306},
  pages     = {35572--35599},
  year      = {2026},
  url       = {https://proceedings.mlr.press/v306/goldberger26a.html}
}

@misc{JinYu2026,
  author       = {Jin, Yiqiao and Yu, Mengxin},
  title        = {Maximum Likelihood Estimation for Entity Ranking under Iterative Synthetic Data Augmentation},
  howpublished = {arXiv preprint arXiv:2609.32987},
  year         = {2026},
  url          = {https://arxiv.org/abs/2609.32987}
}

@article{Grossmann2019PartialProfile,
  author  = {Gro{\ss}mann, Heiko},
  title   = {A Practical Approach to Designing Partial-Profile Choice
             Experiments with Two Alternatives for Estimating Main Effects
             and Interactions of Many Two-Level Attributes},
  journal = {Journal of Choice Modelling},
  volume  = {32},
  pages   = {100136},
  year    = {2019},
  doi     = {10.1016/j.jocm.2018.04.001}
}

@inproceedings{ChiangEtAl2024Arena,
  author    = {Chiang, Wei-Lin and Zheng, Lianmin and Sheng, Ying and
               Angelopoulos, Anastasios Nikolas and Li, Tianle and Li, Dacheng
               and Zhu, Banghua and Zhang, Hao and Jordan, Michael I. and
               Gonzalez, Joseph E. and Stoica, Ion},
  title     = {{Chatbot Arena}: An Open Platform for Evaluating {LLM}s by Human
               Preference},
  booktitle = {Proceedings of the 41st International Conference on Machine
               Learning},
  series    = {Proceedings of Machine Learning Research},
  volume    = {235},
  pages     = {8359--8388},
  year      = {2024},
  url       = {https://proceedings.mlr.press/v235/chiang24b.html}
}

@misc{LMArenaHumanPreference140k,
  author       = {{LMArena}},
  title        = {{Arena Human Preference 140K}},
  year         = {2025},
  howpublished = {Hugging Face data set},
  url          = {https://huggingface.co/datasets/lmarena-ai/arena-human-preference-140k},
  note         = {Dataset revision recorded in the artifact metadata; accessed August 8, 2026}
}
